\documentclass[a4paper]{amsart}

\usepackage{amsrefs}

\usepackage{etex}
\usepackage{stmaryrd}
\usepackage{hyperref}

\usepackage{txfonts,amsmath,amstext,amsthm,amscd,amsopn,verbatim,amssymb,amsfonts,mathtools,enumitem}
\usepackage{fullpage}

\usepackage{todonotes}

\usepackage{adjustbox}



\usepackage[bbgreekl]{mathbbol}


\usepackage{tikz}
\usepackage{tikz-cd}
\usetikzlibrary{matrix}
\usetikzlibrary{shapes}
\usetikzlibrary{arrows, automata}
\usetikzlibrary{calc,3d}
\usetikzlibrary{decorations,decorations.pathmorphing,decorations.pathreplacing,decorations.markings}
\usetikzlibrary{through}

\tikzset{snakeit/.style={decorate, decoration={snake, amplitude=.2mm,segment length=1mm}}}

\tikzset{ext/.style={circle, draw,inner sep=1pt}, int/.style={circle,draw,fill,inner sep=1pt},nil/.style={inner sep=1pt}}
\tikzset{cross/.style={path picture={ 
    \draw[black]
  (path picture bounding box.south east) -- (path picture bounding box.north west) (path picture bounding box.south west) -- (path picture bounding box.north east);
  }}, uv/.style={circle, draw,cross, inner sep=1pt}}
\tikzset{cy/.style={circle,draw,fill,inner sep=2pt},scy/.style={circle,draw,inner sep=2pt},scyx/.style={draw,cross out,inner sep=2pt},scyt/.style={draw,regular polygon,regular polygon sides=3,inner sep=0.95pt}}
\tikzset{exte/.style={circle, draw,inner sep=3pt},inte/.style={circle,draw,fill,inner sep=3pt}}
\tikzset{diagram/.style={matrix of math nodes, row sep=3em, column sep=2.5em, text height=1.5ex, text depth=0.25ex}}
\tikzset{diagram2/.style={matrix of math nodes, row sep=0.5em, column sep=0.5em, text height=1.5ex, text depth=0.25ex}}
\tikzset{rowcolsep/.style={column sep=.2cm, row sep=.1cm}}

\tikzset{
  crossed/.style={
    decoration={markings,mark=at position .5 with {\arrow{|}}},
    postaction={decorate},
    shorten >=0.4pt}}

\tikzset{every picture/.style={baseline=-.65ex} }
\tikzset{every loop/.style={draw}}

  \tikzset{->-/.style={decoration={
    markings,
    mark=at position .5 with {\arrow{>}}},postaction={decorate}}}




\theoremstyle{plain}

\newtheorem{thm}{Theorem}[section]

\newtheorem{defn}[thm]{Definition}
\newtheorem{prop}[thm]{Proposition}
\newtheorem{cor}[thm]{Corollary}
\newtheorem{lemma}[thm]{Lemma}

\theoremstyle{definition}

\newtheorem{ex}[thm]{Example}
\newtheorem{rem}[thm]{Remark}


\hypersetup{pdfauthor={Thomas Willwacher}}




\newcommand{\Q}{{\mathbb{Q}}}

\newcommand{\vspan}{\mathrm{span}_\Q}

\newcommand{\flD}{\mathsf{fD}}
\newcommand{\PaRB}{\mathsf{PaRB}}


\newcommand{\dgVect}{\dg\Vect}




\newcommand{\mF}{\mathcal{F}}

\newcommand{\bbS}{\mathbb{S}}

\newcommand{\BV}{\mathsf{BV}}



\newcommand{\MMP}{Y}
\newcommand{\tMMP}{\tilde{Y}}

\renewcommand{\Bar}{{\mathtt{B}}}

\newcommand{\kk}{\mathfrak{K}}


\newcommand{\G}{\mathrm{G}}



\newcommand{\bpm}{\begin{pmatrix}}
\newcommand{\epm}{\end{pmatrix}}

\newcommand{\MC}{\mathsf{MC}}


\newcommand{\Op}{\mathcal{O}\mathit{p}}
\newcommand{\La}{\Lambda}

\newcommand{\Hopf}{\mathcal{H}\mathit{opf}}
\newcommand{\cycHOpc}{\mathcal{C}\mathit{yc}\Hopf\Op^c}
\newcommand{\PairHOpc}{\Hopf\PairOp^c}
\newcommand{\ptPairHOpc}{\pt\PairHOpc}

\newcommand{\HOpc}{\Hopf\Op^c}

\newcommand{\Free}{\mathbb{F}}

\newcommand{\Seq}{\mathcal{S}\mathit{eq}}
\newcommand{\CSeq}{\mathcal{C}\mathit{yc}\Seq}

\newcommand{\Map}{\mathrm{Map}}

\newcommand{\Aut}{\mathrm{Aut}}

\newcommand{\gr}{\mathrm{gr}}
\newcommand{\dg}{\mathit{dg}}

\newcommand{\GRT}{\mathrm{GRT}}

\newcommand{\Vect}{{\mathcal{V}\mathit{ect}}}


\newcommand{\oW}{\mathring{W}} 

\newcommand{\beq}[1]{\begin{equation}\label{#1}}
\newcommand{\eeq}{\end{equation}}

\newcommand{\dgca}{\mathrm{dgca}}

\newcommand{\MP}{X}

\newcommand{\mG}{{\mathcal G}}

\newcommand{\PaCD}{\mathsf{ PaCD}}

\newcommand{\eis}{\mathrm{1}}
\newcommand{\peis}{\mathrm{1}_{Pair}}
\newcommand{\meis}{\mathrm{1}_M}

\newcommand{\SO}{\mathrm{SO}}

\newcommand{\Res}{\mathrm{Res}}
\newcommand{\Ind}{\mathrm{Ind}}

\newcommand{\ar}{\mathrm{ar}}
\newcommand{\Mod}{\mathcal{M}\mathit{od}}
\newcommand{\ptMod}{\mathrm{pt}\Mod}
\newcommand{\Modc}{\Mod^c}
\newcommand{\dgModc}{\dg\Mod^c}
\newcommand{\dgHModc}{\Hopf\Mod^c}

\newcommand{\iHom}{{\mathcal{H}om}}

\newcommand{\sset}{\mathit{s}\mathcal{S}\mathit{et}}

\newcommand{\sLie}{\mathsf{sLie}}

\newcommand{\fg}{\mathfrak{g}}
\newcommand{\fh}{\mathfrak{h}}

\newcommand{\cone}{\mathrm{cone}}
\newcommand{\sSet}{\sset}

\newcommand{\FreeMod}{\Free}

\newcommand{\M}{\mathcal{M}}

\DeclareMathAlphabet{\mathsfit}{OT1}{cmss}{m}{sl}
\DeclareMathOperator{\AOp}{\mathsfit{A}}
\DeclareMathOperator{\BOp}{\mathsfit{B}}
\DeclareMathOperator{\COp}{\mathsfit{C}}
\DeclareMathOperator{\DOp}{\mathsfit{D}}

\DeclareMathOperator{\MOp}{\mathsfit{M}}
\DeclareMathOperator{\NOp}{\mathsfit{N}}
\DeclareMathOperator{\POp}{{\mathsfit{P}}}
\DeclareMathOperator{\QOp}{{\mathsfit{Q}}}

\DeclareMathOperator{\dgOpc}{\dg\Op^c}

\DeclareMathOperator{\dgOp}{\dg\Op}

\DeclareMathOperator{\dgHOpc}{\dg\Hopf\Op^c}

\newcommand{\eql}{\mathrm{eq}}
\newcommand{\coeql}{\mathrm{coeq}}
\newcommand{\cC}{{\mathcal C}}
\newcommand{\cycOp}{{\mathcal{C}\mathrm{yc}\Op}}

\newcommand{\PairOp}{{\mathcal{P}\mathit{air}}}
\newcommand{\ptPairOp}{\pt\PairOp}
\newcommand{\pt}{{\mathit{pt}}}


\newcommand{\ftc}{\mathfrak t^c}
\newcommand{\ft}{\mathfrak t}
\newcommand{\frtc}{\mathfrak {ft}^c}
\newcommand{\frt}{\mathfrak {ft}}
\newcommand{\PaP}{\mathsf{PaP}}
\newcommand{\tPaP}{\widetilde{\PaP}}
\newcommand{\PaRCD}{\mathsf{PaRCD}}

\newcommand{\dgCycOp}{\dg\cycOp}
\newcommand{\dgptPairOp}{\dg\pt\PairOp}
\newcommand{\dgPairOp}{\dg\PairOp}
\newcommand{\dgptMod}{\dg\pt\Mod}
\newcommand{\dgMod}{\dg\Mod}




\author{Thomas Willwacher}
\address{Department of Mathematics \\ ETH Zurich \\
R\"amistrasse 101 \\
8092 Zurich, Switzerland}
\email{thomas.willwacher@math.ethz.ch}
\thanks{The author has been partially supported by the NCCR Swissmap, funded by the Swiss National Science Foundation}

\begin{document}
\title{Cyclic operads through modules}
\begin{abstract}
We describe a way to compute mapping spaces of cyclic operads through modules. As an application we compute the homotopy automorphism space of the cyclic Batalin-Vilkovisky (Hopf co-)operad.
\end{abstract}
\maketitle

\section{Introduction}

Let $\POp$ be a (unital augmented) cyclic dg operad. Then we can associate to $\POp$ the non-cyclic operad $\POp^{nc}:=\POp$, and the right $\POp^{nc}$-module $\POp^{mod}=\POp$.
Hence we obtain a "forgetful" functor 
\[
F : \dg\cycOp \to \dgptPairOp    
\]
from the category of cyclic operads to the category of pairs $(\QOp, \MOp)$ consisting of a non-cyclic (unital augmented) operad $\QOp$, and a right pointed $\QOp$-module $\MOp$. Here "pointed" means that there is a distinguished element $1\in \MOp((2))^{S_2}$ and an augmentation, that remember the unit and augmentation on $\POp$.

The purpose of this paper is to show that $F$ is homotopically fully faithful, or more precisely:
\begin{thm}\label{thm:main dg}
The forgetful functor $F$ is part of a Quillen adjunction 
\[
G \colon  \dgptPairOp  \rightleftarrows \dgCycOp \colon F.   
\]
For $\POp$ any augmented cyclic operad the derived counit of the adjunction 
\[
LG( RF (\POp)) \to \POp    
\] 
is a weak equivalence.
\end{thm}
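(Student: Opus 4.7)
The plan is to establish the Quillen adjunction by a transfer of model structures and then verify the derived counit equivalence by resolving $\POp$ cofibrantly as a cyclic operad and transporting the resolution through $F$. First I would equip both $\dg\cycOp$ and $\dgptPairOp$ with model structures transferred along the forgetful functors to dg symmetric sequences, so that weak equivalences and fibrations are componentwise quasi-isomorphisms and surjections; existence of such transferred model structures is standard. The functor $F$ is essentially the identity on underlying dg collections — it forgets only the extension of the $S_n$-symmetry on $\POp(n)$ to an $S_{n+1}$-symmetry on $\POp((n+1))$, and remembers the right module structure induced by operadic composition — and so preserves fibrations and weak equivalences. Hence $F$ is right Quillen, and the left adjoint $G$ exists by the adjoint functor theorem; explicitly, $G(\QOp,\MOp)$ presents the cyclic operad freely generated by $\MOp$ modulo the requirements that the right $\QOp$-action on $\MOp$ agree with the one induced by cyclic composition and that the distinguished pointing recover the cyclic unit.

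For the counit, let $\POp^c \xrightarrow{\sim} \POp$ be a cofibrant replacement in $\dg\cycOp$ — for instance the cyclic bar--cobar resolution $\Cobar\Bar\POp$. The goal is then to show two things: first, that $F(\POp^c)$ is cofibrant in $\dgptPairOp$; second, that the underived counit $GF(\POp^c) \to \POp^c$ is a weak equivalence. Granting these, $F(\POp^c)$ is a cofibrant replacement of $F(\POp)$ in pairs, so
\[
LG(RF(\POp)) \;\simeq\; GF(\POp^c) \;\simeq\; \POp^c \;\simeq\; \POp,
\]
as desired. The second claim reduces, by a cell-by-cell induction along the cellular structure of $\POp^c$, to the case of a free cyclic operad $\Free^{\mathrm{cyc}}(V)$: there one expects $GF(\Free^{\mathrm{cyc}}(V)) \to \Free^{\mathrm{cyc}}(V)$ to be an isomorphism, because a free cyclic operad is spanned by trees with $S_{n+1}$-symmetry, and its underlying (operad, module) pair records precisely the rooted-tree data from which $G$ reconstructs the cyclic operations.

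The main obstacle is the first claim, that $F$ of a cofibrant cyclic operad is cofibrant as a pair. While weak equivalences and fibrations are preserved trivially, the generating cofibrations of $\dgptPairOp$ are free (operad, module) pairs, and it is not formal that $F$ of a free cyclic operad has this shape. The crux is a combinatorial decomposition: the underlying operad of $\Free^{\mathrm{cyc}}(V)$ should match the free operad generated by $V$ together with its cyclic rotations, and the underlying module should be similarly free on the rooted-tree generating collection. With this decomposition in hand, a transfinite induction along cell attachments — using that $F$ commutes with the relevant colimits and retracts — yields cofibrancy of $F(\POp^c)$ in pairs, and together with the verification of the counit on free objects completes the proof.
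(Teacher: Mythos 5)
There is a genuine gap, and it sits exactly where you locate ``the main obstacle'': the cofibrancy of $F(\POp^c)$ in $\dgptPairOp$ for $\POp^c$ a cofibrant cyclic operad. Your proposed resolution --- that the underlying pair of a free cyclic operad decomposes as a free (operad, pointed module) pair --- is false for the module half. For $\POp=\Free_{cyc}(V)$ the underlying operad is indeed $\Free(\Res V)$, but the right module $\POp^{mod}$ is generated over it by the single point $1\in\POp^{mod}(2)$ \emph{with relations}: the $S_2$-invariance of the unit together with the cyclic symmetry force identifications of the form $1\circ_1 q=\sigma\cdot(1\circ_2 q')$, so $\POp^{mod}$ is a proper quotient of the free pointed module $\eis_{\POp}=\FreeMod_\POp(\meis)$ and is not cofibrant by any such decomposition. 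This is not a technical nuisance but the actual homotopical content of the theorem: the paper's Proposition \ref{prop:MMPs} shows that the module bar construction $\overline{\Bar\POp^{mod}}$ has the homotopy type of $\MMP\otimes\overline{\Bar\POp}$ (an $(r-1)$-dimensional multiplicity space in arity $r$), whereas a free pointed module on $1$ would give the $r$-dimensional space $\MP'$; the discrepancy between $\MMP$ and $\MP'$ is precisely what the derived counit has to absorb, and no cell-by-cell formal argument sees it. A secondary problem with the cell induction is that $F$ is a right adjoint, so $GF$ has no reason to commute with the pushouts along generating cofibrations used to build $\POp^c$; even granting that the counit is an isomorphism on free cyclic operads (itself unverified), the inductive step does not follow.

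The paper avoids both issues by resolving on the other side: it takes as cofibrant replacement of $F(\POp)$ the two-colored bar--cobar resolution $\Bar^c\Bar(\POp^{nc},\POp^{mod})$ of the \emph{pair}, which is cofibrant for standard reasons, identifies $G$ of this cobar construction as the cyclic cobar construction of the $1$-shifted cyclic cooperad $\Ind\overline{\Bar\POp^{nc}}\oplus\overline{\Bar\POp^{mod}}$, and then factors the counit through the cyclic bar--cobar resolution $\Bar_{cyc}^c\Bar_{cyc}\POp\to\POp$. The remaining step is a quasi-isomorphism statement on cogenerators, which is exactly where Proposition \ref{prop:MMPs} and the auxiliary sequences $\MP$, $\MMP$ enter, together with Lemma \ref{lem:cyclic cobar} to pass the quasi-isomorphism through the cobar functor. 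To repair your argument you would need, at minimum, an independent proof that $F$ sends (suitable) cofibrant cyclic operads to cofibrant pointed pairs --- and any such proof would have to contain an analysis equivalent to Proposition \ref{prop:MMPs}.
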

It follows that for $\POp$, $\QOp$ augmented cyclic operads we have a weak equivalence of the derived mapping spaces 
\[
\Map^h_{\dgCycOp}(\POp, \QOp) \simeq  
\Map^h_{\dgptPairOp}(F(\POp), F(\QOp)).  
\]
The right-hand side has two benefits over the left-hand side.
First, to compute the right-hand side we need models for $\POp$ and $\QOp$ not as cyclic operads, but only as non-cyclic operads and modules over itself.
Second, the right-hand side helps to understand the difference between cyclic operadic mapping spaces and ordinary operadic mapping spaces for cyclic operads. 
Concretely, one has the natural comparison map
\begin{equation} 
    \label{equ:cyc nc comparison}
    \begin{tikzcd}
    \Map^h_{\dgptPairOp}(F(\POp), F(\QOp)) \ar{r}
    & 
    \Map^h_{\dgOp}(\POp^{nc}, \QOp^{nc})
    \end{tikzcd},
\end{equation}
projecting the morphism of pairs to its (non-cyclic) operadic part.
The morphism \eqref{equ:cyc nc comparison} is a fibration (for suitable models of the derived mapping spaces), and the fiber over $f:\POp^{nc}\to \QOp^{nc}$ is the pointed module mapping space 
$\Map^h_{\dgptMod_{\POp^{nc}}}(\POp^{mod}, f^*\QOp^{mod})$.
From this we immediately obtain a criterion for a non-cyclic operad map between cyclic operads to be homotopic to a cyclic operad map.
\begin{cor}
Let $\POp$, $\QOp$ be augmented cyclic dg operads with $\POp$ cofibrant and let 
$f:\POp^{nc}\to \QOp^{nc}$ be a morphism of non-cyclic dg operads.
Then $f$ is homotopic to a cyclic operad morphism if and only if there is a morphism of pointed $\POp$-modules $\POp^{mod}\to f^*\QOp^{mod}$. 
\end{cor}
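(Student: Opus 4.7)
The plan is to derive the corollary by taking $\pi_0$ of a homotopy fibration sequence whose ingredients are already assembled in the discussion preceding the statement. Concretely, Theorem~\ref{thm:main dg} gives the equivalence
\[
\Map^h_{\dgCycOp}(\POp, \QOp) \simeq \Map^h_{\dgptPairOp}(F(\POp), F(\QOp)),
\]
and combining this with the claim that \eqref{equ:cyc nc comparison} is a fibration whose fiber over $f$ equals the pointed module mapping space $\Map^h_{\dgptMod_{\POp^{nc}}}(\POp^{mod}, f^*\QOp^{mod})$, I obtain a homotopy fibration sequence whose total space is $\Map^h_{\dgCycOp}(\POp, \QOp)$ and whose base is $\Map^h_{\dgOp}(\POp^{nc}, \QOp^{nc})$. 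The corollary is then the statement that the class $[f]$ lies in the image of $\pi_0$ of the projection iff the fiber over $f$ is non-empty on $\pi_0$.

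For the "only if" direction, I start from a cyclic operad morphism $g\colon \POp\to\QOp$ with $g^{nc}$ homotopic to $f$; applying the forgetful functor $F$ yields the strict pointed pair morphism $F(g)=(g^{nc}, g^{mod})$, whose module component is a pointed $\POp$-module morphism $\POp^{mod}\to (g^{nc})^*\QOp^{mod}$, which I transport along a homotopy $g^{nc}\simeq f$ to a pointed module morphism $\POp^{mod}\to f^*\QOp^{mod}$ via the lifting property granted by cofibrancy. For the converse, a strict pointed module morphism $h\colon\POp^{mod}\to f^*\QOp^{mod}$ assembles with $f$ into a strict pointed pair morphism $(f,h)\colon F(\POp)\to F(\QOp)$, defining a class in $\pi_0\,\Map^h_{\dgptPairOp}(F(\POp), F(\QOp))$; via the equivalence of Theorem~\ref{thm:main dg} this corresponds to a class in $\pi_0\,\Map^h_{\dgCycOp}(\POp,\QOp)$ that, by cofibrancy of $\POp$, is represented by an honest cyclic operad morphism $g\colon\POp\to\QOp$ whose underlying non-cyclic morphism $g^{nc}$ is homotopic to $f$.

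The main obstacle is purely bookkeeping: verifying under the given hypotheses that $\pi_0$ of each mapping space is computed by strict morphisms modulo homotopy, so that "non-empty fiber over $f$" genuinely translates to "existence of a strict pointed module map $\POp^{mod}\to f^*\QOp^{mod}$". This requires unpacking the Quillen adjunction of Theorem~\ref{thm:main dg} to deduce cofibrancy of $\POp^{mod}$ as a pointed $\POp^{nc}$-module from cofibrancy of $\POp$ as a cyclic operad, together with the standard fact that in the dg setting all objects are fibrant. Once this compatibility is in hand the corollary follows immediately from the fibration sequence above.
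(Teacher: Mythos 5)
Your proposal is correct and follows exactly the route the paper intends: the corollary is stated as an immediate consequence of the fibration \eqref{equ:cyc nc comparison} (Proposition \ref{prop:pair op op fibration}) together with the equivalence of mapping spaces from Theorem \ref{thm:main dg}, and your $\pi_0$-level analysis of that fiber sequence is precisely this argument. The only caveat you rightly flag --- that cofibrancy of $F(\POp)$ as a pointed pair is needed to run Proposition \ref{prop:pair op op fibration} and does not follow formally from $F$ being a right Quillen functor --- is glossed over by the paper itself, so your treatment matches the paper's level of detail.
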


Furthermore, we obtain a long exact sequence of homotopy groups 
\[
\cdots \to 
\pi_k \Map^h_{\dgptMod_{\POp}}(\POp^{mod}, f^*\QOp^{mod})  
\to 
\pi_k \Map^h_{\dgCycOp}(\POp, \QOp)
\to 
\pi_k \Map^h_{\dgOp}(\POp^{nc}, \QOp^{nc})    
\to \cdots.
\]
that allows us to compare the cyclic and non-cyclic mapping spaces.

For the purposes of rational homotopy theory, one is typically interested not in dg operads, but in dg Hopf cooperads, that is, cooperads in differential graded commutative algebras.
The version of Theorem \ref{thm:main dg} for dg Hopf cooperads reads:

\begin{thm}\label{thm:main Hopf}
Let $F:\cycHOpc \to \pt\PairHOpc$ be the forgetful functor sending a cyclic dg Hopf cooperad $\COp$ to the pair $(\COp^{nc},\COp^{mod})$ consisting of the non-cyclic dg Hopf cooperad $\COp^{nc}:=\COp$ and the pointed $\COp^{nc}$-comodule $\COp^{mod}:=\COp$. Then $F$ is part of a Quillen adjunction 
\[
F \colon \cycHOpc  \rightleftarrows  \ptPairHOpc \colon G, 
\]
and for any cofibrant cyclic dg Hopf cooperad $\COp$ the derived unit of the adjunction 
\[
    \COp \to  RG(LF(\COp))    
\] 
is a weak equivalence.
\end{thm}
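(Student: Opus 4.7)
The proof parallels that of Theorem \ref{thm:main dg}, but with arrows reversed: since Hopf cooperads are cooperads in the category of differential graded commutative algebras, the forgetful functor $F$ now sits as the \emph{left} adjoint, and the derived \emph{unit}, rather than counit, must be shown to be a weak equivalence.

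First I would construct the right adjoint $G$. For a pair $(\DOp, \NOp) \in \ptPairHOpc$, the cyclic Hopf cooperad $G(\DOp, \NOp)$ is given by an arity-wise equalizer imposing the full $S_{n+1}$-equivariance together with the compatibility of the cyclic cocomposition with both the cooperad structure on $\DOp$ and the pointed coaction on $\NOp$. Concretely this is the dual of the free cyclic operad construction that plays the role of $G$ in the proof of Theorem \ref{thm:main dg}; the unit $\eta_\COp \colon \COp \to G(F(\COp))$ and the counit are then determined by the universal property.

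Second, I would endow both categories with transferred model structures in which weak equivalences and fibrations are detected arity-wise on the underlying dgca collections. In this setting $F$ manifestly preserves cofibrations and weak equivalences, because $F(\COp)$ carries the same underlying dgca in each arity as $\COp$ and only forgets the $S_{n+1}$-symmetry. Hence $F \dashv G$ is a Quillen adjunction and $LF = F$ on cofibrant objects.

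The main step is to show that for a cofibrant $\COp$ the unit $\eta_\COp$, composed with a fibrant replacement of its target, is a weak equivalence. I would use that a cofibrant cyclic dg Hopf cooperad admits a quasi-free presentation over an auxiliary cyclic cooperadic collection $V$; in each arity the value $G(F(\COp))(n)$ and the map $\eta_\COp(n)$ then admit explicit descriptions as (completed) polynomial dgca's in the generators $V$. A weight-filtration spectral sequence reduces the claim to an arity-wise quasi-isomorphism on the associated graded, which is the linear dual of the statement supplied by Theorem \ref{thm:main dg}. The main obstacle is to set up this reduction cleanly, in particular to produce a quasi-free presentation of cofibrant cyclic Hopf cooperads that is compatible with the cyclic structure and whose associated graded matches the dual of the operadic derived counit. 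An alternative, conceptually cleaner route would be to establish a cyclic-equivariant bar/cobar duality between cyclic dg operads and cyclic dg Hopf cooperads and then invoke Theorem \ref{thm:main dg} directly.
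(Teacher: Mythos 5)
Your overall architecture is reasonable --- identify $F$ as the left adjoint, build $G$ as a right adjoint via equalizers of cofree objects, check the Quillen condition, and reduce the derived-unit statement to the non-Hopf situation --- and the first two steps broadly agree with the paper. The gap is in your main step. The paper does not run a spectral-sequence computation on a quasi-free presentation of a cofibrant cyclic Hopf cooperad; the reduction to the non-Hopf case is purely formal, resting on two observations you do not make: (i) limits in $\cycHOpc$ are created in $\dg\cycOp^c$, and $G$ is computed by an equalizer of cofree objects, so the underlying dg cyclic cooperad of $G$ applied to a Hopf pair coincides with $G$ applied to the underlying dg pair; and (ii) a fibrant replacement $X$ of $F(\COp)$ in $\pt\PairHOpc$ is still fibrant in $\dgptPairOp^c$, by the Quillen adjunction forgetting the commutative algebra structure. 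Together these identify the derived unit $\COp\to G(X)$ in the Hopf setting with the derived unit of the plain dg cooperad adjunction, which is a weak equivalence by Theorem \ref{thm:main dgc} --- the cooperad version obtained by dualizing the \emph{proof} (the bar-cobar argument, Proposition \ref{prop:MMPs} and Lemma \ref{lem:cyclic cobar}) of Theorem \ref{thm:main dg}, not merely its statement.

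Your proposed route would have to produce a quasi-free presentation of a cofibrant cyclic dg Hopf cooperad ``over a cyclic cooperadic collection $V$'' whose associated graded is literally the linear dual of the operadic situation. Cofibrant objects in Fresse's model structure are arity-wise quasi-free as commutative algebras, not quasi-cofree as cooperads, so the generators you want to filter by do not interact with the cooperad structure in the way a naive dualization suggests; you would also need to say where the fibrant replacement of $F(\COp)$ enters, since $RG$ is computed on a fibrant object and your description computes $G(F(\COp))$ itself. Both difficulties evaporate once you use (i) and (ii) above. Your closing suggestion of a ``cyclic-equivariant bar/cobar duality'' is closer in spirit to what is needed, but the intermediate statement to formulate and prove is the plain dg cooperad version of the theorem (Theorem \ref{thm:main dgc}); the Hopf case then follows without further computation.
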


Again this allows us to compare the mapping spaces of non-cyclic and cyclic dg Hopf cooperads analogous to the dg operadic situation.
We apply our results to one particular example.
The Batalin-Vilkovisky dg Hopf cooperad 
\[
\BV^c := H^\bullet(\flD_2),
\]
is defined as the cohomology cooperad of the framed little disks operad $\flD_2$. It is well-known \cite{GKcyclic, BudneyCyclic} that $\BV^c$ is a cyclic Hopf cooperad.
The homotopy automorphism simplicial monoid of the non-cyclic cooperad $\BV^{c,nc}$ has been computed in \cite{HWframed} to be 
\begin{equation}\label{equ:pi aut bv}
    \Aut^h_{\HOpc}(\BV^{c,nc}) 
    \simeq
        \GRT \ltimes \SO(2)^\Q,
\end{equation}
with $\GRT$ the Grothendieck-Teichmüller group.
We show in Proposition \ref{prop:GRT action} below that the $\GRT$-action 
in fact preserves the cyclic structure on $\BV^c$.
Furthermore, essentially by a degree counting argument one can show:
\begin{thm}\label{thm:aut BVc contr}
    The homotopy automorphism group of the $\BV^{c,nc}$-Hopf-comodule $\BV^{c,mod}$ is weakly contractible, i.e., 
\[
   \pi_k \Aut^h_{\dgHModc_{\BV^{c,nc}}}(\BV^{c,mod})
   =0 \quad \text{for all $k$.}
\] 
\end{thm}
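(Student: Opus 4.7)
The plan is to translate the vanishing of the homotopy groups into the vanishing of a Hopf-comodule derivation complex, which then follows by comparing cohomological degrees. Choose a cofibrant model $\mathcal{M}\to \BV^{c,mod}$ in $\dgHModc_{\BV^{c,nc}}$ (for instance a Sullivan/minimal model of the Hopf comodule, or the one obtained by Koszul resolution over the cyclic operad $\BV$ arity-by-arity). The standard obstruction-theoretic description of the simplicial monoid of homotopy automorphisms then identifies
\[
\pi_k\Aut^h_{\dgHModc_{\BV^{c,nc}}}(\BV^{c,mod})\;\cong\;H^{-k}\!\bigl(\Der_{\BV^{c,nc}}(\mathcal{M},\mathcal{M})\bigr)\qquad (k\ge 1),
\]
with $\pi_0$ given by chain-level $\dgHModc$-automorphisms modulo homotopy. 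It therefore suffices to show that this derivation complex has no cohomology in non-positive cohomological degrees and that the only homotopy automorphism up to homotopy is the identity.

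The core degree count uses that each arity component $\BV^c((n))=H^\bullet(\flD_2((n)))$ is concentrated in non-negative cohomological degrees and is generated, as a graded commutative algebra, by the Arnold classes $\omega_{ij}$ and the framing classes $\alpha_i$, all of cohomological degree exactly $1$. A Hopf-comodule derivation is determined by its values on these generators; if it has internal cohomological degree $-k$, then on each degree-$1$ generator it lands in $\BV^c((n))^{1-k}$. For $k\ge 2$ this target is zero. Because $\BV^c$ carries the zero internal differential, the differential on $\Der(\mathcal{M},\mathcal{M})$ is likewise zero up to the contributions coming from the cofibrant resolution --- contributions which one checks add nothing to the relevant cohomology here --- so the vanishing of $\Der^{-k}$ directly gives $\pi_k\Aut^h=0$ for all $k\ge 2$.

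The residual cases $k=0,1$ are then finite combinatorial checks. For $k=1$, a derivation maps each generator into $\BV^c((n))^0=\Q$, and compatibility with the cooperadic coaction --- which on the Arnold/framing generators is essentially their natural restriction to lower arity --- forces these scalars to vanish. For $\pi_0$, degree counting restricts an automorphism to a $\Q$-linear self-map of the generators arity by arity, and the coaction constraints together with the normalization of the distinguished element $1\in\BV^{c,mod}((2))^{S_2}$ pin this down to the identity. The main obstacle is carrying out these last compatibility checks rigorously: one must track the cyclic $S_n$-equivariance of the generators and verify that no non-trivial rescaling is consistent with the coaction --- equivalently, that the $\SO(2)$ factor of framing-rescalings visible in \eqref{equ:pi aut bv} for the non-cyclic operad does not lift to a non-trivial automorphism of the pointed module.
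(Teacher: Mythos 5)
There is a genuine gap, and it sits exactly where you write ``contributions which one checks add nothing to the relevant cohomology here.'' Your degree count is performed on the strict derivation complex $\Der(\BV^{c},\BV^{c})$: you let a derivation of internal degree $-k$ act on the degree-one algebra generators of $\BV^{c}((n))$ and land in $\BV^{c}((n))^{1-k}$. But $\pi_k\Aut^h$ is computed by the \emph{derived} derivation complex, i.e.\ derivations from a cofibrant model into a fibrant model of the comodule, and the generators/cogenerators of those resolutions live in many degrees. In the paper this complex is (up to the weight and arity filtrations) $\iHom_{\bbS}(\frtc[-1], \oW\BV^{c,mod})$, where $\oW\BV^{c,mod}$ is quasi-isomorphic to the comodule cobar construction $\Bar^c\BV^{c,mod}$; its cohomology is controlled by Proposition \ref{prop:MMPs} together with the Drummond-Cole--Vallette computation $H(\overline{\Bar^c\BV^c}((r)))\cong H_c^\bullet(\M_{0,r})$ for $r\ge 3$ (and $u\Q[u][1]$ for $r=2$). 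These groups are far from concentrated in degree $\le 1$, so your claim that the target vanishes for $k\ge 2$ is false for the complex that actually computes $\pi_k$; what is true, and what requires the bound $H_c^k(\M_{0,r})=0$ for $k<r-3$, is only that the relevant total complex has no cohomology in non-positive degrees away from two exceptional low-arity spots.

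Those two exceptional spots are the second thing your argument misses: even after the correct setup, the first page of the arity spectral sequence on $\gr_W\fg$ retains a one-dimensional class in degree $-1$ (weight $1$, arity $2$, coming from $t_{12}\otimes 1$) and one in degree $0$ (weight $1$, arity $3$). These do not vanish by degree counting; the paper shows they must cancel on the next page by an indirect argument: if the degree $-1$ class survived, $\BV^{c,mod}$ would admit a nontrivial degree $-1$ comodule biderivation on cohomology, contradicting the explicit rigidity computation of Proposition \ref{prop:BV rigid} (which uses the basis $E_{ij}$ of $\BV((r))^1$ and the failure of symmetry of $\Delta\circ_2 c$). Your ``finite combinatorial checks'' for $k=0,1$ are essentially Proposition \ref{prop:BV rigid}, which is indeed an ingredient, but it enters as the final cancellation step inside a Goldman--Millson-type argument (Lemma \ref{lem:simple GM}) applied to a filtered dg Lie algebra, not as a substitute for computing the derived complex. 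To repair the proof you would need to (i) actually identify the derived derivation complex via a cofibrant model such as $C(\frtc)$ and a fibrant $W$-construction, (ii) run the weight/arity spectral sequence using the moduli-space computation, and (iii) supply the cancellation of the two residual classes.
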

Note that this result is about comodule automorphisms, not pointed comodule automorphisms. However, they can be compared and related to each other and one eventually finds:
\begin{cor}\label{cor:Aut BV}
The homotopy automorphism group of the Batalin-Vilkovisky cooperad $\BV^c$ as a cyclic dg Hopf cooperad is weakly equivalent to the Grothendieck-Teichmüller group $\GRT$, considered as a discrete group, that is
\[
    \pi_k
    \Aut^h_{\cycHOpc}(\BV^c) 
    \cong 
    \begin{cases}
     \GRT & \text{for $k=0$} \\
     0  &  \text{for $k\geq 1$}
    \end{cases} .
\]
\end{cor}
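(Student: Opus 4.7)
The plan is to combine Theorem \ref{thm:main Hopf} with the Hopf analog of the comparison fibration \eqref{equ:cyc nc comparison}, specialized to automorphism components, into a single homotopy fibration sequence
\[
\Aut^h_{\dgHModc^{\pt}_{\BV^{c,nc}}}(\BV^{c,mod}) \longrightarrow \Aut^h_{\cycHOpc}(\BV^c) \longrightarrow \Aut^h_{\dgHOpc}(\BV^{c,nc}).
\]
By \eqref{equ:pi aut bv} the base is weakly equivalent to $\GRT \ltimes \SO(2)^\Q$, so has $\pi_0 = \GRT$, $\pi_1 = \Q$, and vanishing higher homotopy groups. Proposition \ref{prop:GRT action} guarantees that the $\GRT$ factor lifts to genuine cyclic automorphisms, so the induced map on $\pi_0$ is surjective onto $\GRT$. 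It remains to compute the fiber and to analyze the connecting homomorphism on $\pi_1$.

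For the fiber I would compare pointed and unpointed Hopf comodule automorphisms via the derived stabilizer fibration: pointed automorphisms are the stabilizer of the distinguished element $1 \in \BV^{c,mod}((2))^{S_2}$ (and of the augmentation) under the action of unpointed automorphisms on the space of pointings. This yields a principal homotopy fibration
\[
\Aut^h_{\dgHModc^{\pt}_{\BV^{c,nc}}}(\BV^{c,mod}) \longrightarrow \Aut^h_{\dgHModc_{\BV^{c,nc}}}(\BV^{c,mod}) \longrightarrow \mathcal{O},
\]
with $\mathcal{O}$ the derived orbit of $1$. Theorem \ref{thm:aut BVc contr} states that the middle term is weakly contractible, so the fiber is equivalent to $\Omega \mathcal{O}$. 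Identifying $\mathcal{O}$ is a direct computation in the low-arity cohomology of $\flD_2$: the only relevant direction available for nontrivially deforming the pointing comes from the framing classes in $H^1(\flD_2(2))^{S_2}$, and the outcome is an equivalence $\mathcal{O} \simeq K(\Q,1)$. Hence $\pi_0$ of the pointed fiber is $\Q$ and its higher homotopy groups vanish.

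Substituting into the long exact sequence of the first fibration produces
\[
0 \longrightarrow \pi_1 \Aut^h_{\cycHOpc}(\BV^c) \longrightarrow \Q \xrightarrow{\partial} \Q \longrightarrow \pi_0 \Aut^h_{\cycHOpc}(\BV^c) \longrightarrow \GRT \longrightarrow 1,
\]
so that the corollary reduces to showing that the connecting map $\partial$ is an isomorphism. I expect this to be the principal obstacle. Concretely, one must exhibit an explicit representative of the $\SO(2)^\Q$-generator in $\pi_1 \Aut^h_{\dgHOpc}(\BV^{c,nc})$ (geometrically, the loop of framing rotations), lift it to a path of unpointed comodule automorphisms via Theorem \ref{thm:aut BVc contr}, and verify that evaluating at the distinguished point traces out a generator of $\pi_1 \mathcal{O} = \Q$. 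Once $\partial$ is seen to be an isomorphism, both copies of $\Q$ cancel and the LES gives $\pi_1 \Aut^h_{\cycHOpc}(\BV^c) = 0$ and $\pi_0 \Aut^h_{\cycHOpc}(\BV^c) \cong \GRT$; vanishing of the higher homotopy groups is immediate since both fiber and base are $1$-truncated.
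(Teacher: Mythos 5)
Your reduction is essentially the paper's argument with the two fibrations traversed in the opposite order: the paper first compares pointed to unpointed \emph{pairs} (a fiber sequence over $\Map^h_{\bbS_2\text{-}\dgca}(\BV^c((2)),\Q)\simeq S^1_\Q$, Lemma \ref{lem:Hopf ptpair pair seq}) and then pairs to non-cyclic operad automorphisms via Proposition \ref{prop:pair opc opc fibration}, whereas you first fiber the pointed-pair automorphisms over $\Aut^h_{\dgHOpc}(\BV^{c,nc})$ and then compute the pointed-comodule fiber by comparison with the contractible unpointed one. Both routes funnel into the same exact sequence $0\to\pi_1\to\Q\to\Q\to\pi_0\to\GRT\to 1$, your identification $\mathcal{O}\simeq K(\Q,1)$ agrees with the paper's computation, and your use of Proposition \ref{prop:GRT action} for surjectivity onto $\GRT$ is the intended one.

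The gap is that you stop exactly where the content lies: you state that one ``must verify'' that the connecting map $\partial:\Q\to\Q$ is an isomorphism, but you do not verify it. This is not a formality --- if $\partial$ vanished, the corollary would be false ($\pi_1$ would be $\Q$ and $\pi_0$ a nontrivial extension of $\GRT$ by $\Q$). The paper closes this with a short explicit computation: the generator of $\pi_1\Aut^h_{\dgHOpc}(\BV^{c,nc})\cong\Q$ is represented by the framing biderivation $\xi(x)=\Delta\circ_1 x-(-1)^{|x|}\sum_j x\circ_j\Delta$, whose extension to the pair acts on the module part by $\xi^{mod}(y)=\pm\sum_j y\circ_{j,0}\Delta$; evaluating on the distinguished element $1\in\BV^{c,mod}((2))$ gives $\xi^{mod}(1)=\pm 2\Delta$, so the map you call $\partial$ is multiplication by $\pm 2$ and hence an isomorphism. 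Your plan of ``lifting the loop of framing rotations and tracking the distinguished point'' is the right plan; the missing observation is simply that the cyclic extension of the rotation derivation hits the binary module element once for each of its two inputs, producing the nonzero multiple of $\Delta$ you need.
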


\section{Prerequisites and model categories}
\subsection{Conventions on dg vector spaces}
We use cohomological conventions throughout, so that all our differentials have degree +1. All vector spaces will be $\Q$-vector spaces.
We generally work in either the category $\dgVect_{\geq 0}$ of non-negatively graded dg vector spaces or the category $\dgVect_{\leq 0}$ of non-positively graded dg vector spaces.
We equip these categories with the following (standard) model category structures:
\begin{itemize}
\item The weak equivalences in either category are the quasi-isomorphisms.
\item The cofibrations in $\dgVect_{\geq 0}$ are the morphism that are injective in positive degrees.  The fibrations in $\dgVect_{\geq 0}$ are the morphism that are surjective in all degrees. 
\item The cofibrations in $\dgVect_{\leq 0}$ are the morphism that are injective in all degrees.  The cofibrations in $\dgVect_{\leq 0}$ are the morphism that are surjective in negative degrees. 
\end{itemize}

\subsection{Symmetric sequences and cyclic sequences}
\label{sec:symseq}
A symmetric sequence $\AOp$ in a category $\cC$ is a collection of objects $\AOp(r)$ with a right action of the symmetric group $S_r$ for each $r=1,2,\dots$.
We write $\Seq\cC$ for the category of symmetric sequences.
A cyclic sequence $\BOp$ is the same data, but starting at $r=2$ instead of $r=1$. The usual convention is to write $\BOp((r))$ for the $r$-ary part. We write $\CSeq\cC$ for the category of cyclic sequences.
There is an obvious forgetful functor
\begin{gather*}
\Res :  \CSeq\cC \to \Seq\cC \\
(\Res(\BOp))(r) = \Res_{S_{r+1}}^{S_r}\BOp((r+1)).
\end{gather*}
In the situations of interest to us, in particular for $\cC$ the category of dg vector spaces, this functor has a left adjoint that we denote by 
\begin{gather*}
    \Ind :  \Seq\cC \to \CSeq\cC \\
    (\Ind(\AOp))((r)) = \Ind_{S_{r-1}}^{S_r}\AOp((r-1)).
\end{gather*}

It is often convenient to index cyclic or symmetric sequences by finite sets instead of numbers, i.e., we may define a symmetric sequence as a contravariant functor from the category of finite sets with bijections as morphisms to the category of dg vector spaces.
Then $\POp((n)):=\POp(\{1,\dots,n\})$ is the value of the functor on the set $\{1,\dots,n\}$, and conversely one may recover the functor on a set $A$ by setting
\[
\POp((A)) := \left(\bigoplus_{f: A\xrightarrow{\cong}\{1,\dots,n\}} \POp((n))  \right)_{S_rn},
\]
where the direct sum is over bijections from $A$ to $\{1,\dots,n\}$ and the symmetric group acts diagonally on the set of such bijections and on $\POp((n))$.
Intuitively, this corresponds to labeling the inputs of our operations by elements of $A$ rather than numbers.
We shall freely pass between both definitions of symmetric sequences and always use the most convenient.

\subsection{Model categories of operads and cooperads}

\subsubsection{Dg operads}
We denote by $\dgOp$ (respectively $\dgCycOp$) the category of augmented (resp. augmented cyclic) dg operads.
We assume that the underlying dg vector spaces are non-positively cohomologically graded.
We also assume that our operads do not have operations of arity zero.

The category of pairs $(\POp, \MOp)$ of a dg operad $\POp$ and an operadic right $\POp$-module $\MOp$ is denoted by $\dgPairOp$.
The category of right operadic $\POp$-modules, for a fixed operad $\POp$, is denoted by $\dgMod_{\POp}$.
As usual, we equip all these categories with cofibrantly generated model structures by transfer along the forgetful functors to dg symmetric sequences
\begin{align*}
    \dgOp &\to \Seq\dgVect_{\leq 0}
    &
    \dg\cycOp &\to \CSeq\dgVect_{\leq 0}
    \\
    \dg\PairOp &\to \Seq\dgVect_{\leq 0} \times \Seq\dgVect_{\leq 0}
    &
    \dg\Mod_{\POp} &\to \Seq\dgVect_{\leq 0}
      .
\end{align*}
More precisely, the forgetful functor $\dgOp \to \Seq\dgVect_{\leq 0}$ associates to an augmented dg operad $\POp$ with augmentation $\epsilon : \POp \to 1$ the augmentation ideal $\bar P=\ker\epsilon$. The other forgetful functors are defined similarly.
The transfered model structure then has the following distinguished classes of morphism:
\begin{itemize}
\item The weak equivalences are the quasi-isomorphisms.
\item The cofibrations are the morphism are the morphisms that are arity- and degreewise injective maps. 
\item The fibrations are the morphisms that are aritywise surjective maps in negative degrees.
\end{itemize}
We refer to \cite[Theorem 2.1]{BMColored} for the fact that the right-transfer yields well-defined cofibrantly generated model category structures in these cases. 

We define the special object of $\dg\PairOp$
\[
\peis = (\eis, \meis)
\]
with 
\begin{equation}\label{equ:meis def}
\meis(r)=
\begin{cases}
    \Q & \text{for $r=2$} \\
    \emptyset & \text{otherwise}
\end{cases},
\end{equation}
with $\Q$ considered as the trivial $S_2$-module concentrated in cohomological degree zero.
The object $\peis$ is fibrant and cofibrant in $\dg\PairOp$. We define the over-under-category
\[
    \dgptPairOp := \dgPairOp^{\peis/}_{/\peis},
\]
whose objects are factorizations of the identity
\[
\peis \to (\POp, \MOp) \to \peis.  
\]
We equip $\dgptPairOp$  with the slice model category structure, see \cite{HirschhornSlice}. That is, the weak equivalences (resp. fibrations cofibrations) are those morphisms that are weak equivalences (resp. fibrations, cofibrations) in $\dgPairOp$.

Similarly, for $\POp$ a fixed dg operad, we consider the free right $\POp$-module 
\[
    \eis_{\POp} = \FreeMod_{\POp}(\meis).
\]
This is again cofibrant by freeness. We equip the over-under-category 
\[
    \dgptMod_{\POp} := (\dgMod_{\POp})^{\eis_{\POp}/}_{/\meis}      
\]
with the slice model structure as well.

\subsubsection{Dg cooperads}
Let $\dgOpc$ (resp. $\dgCycOp^c$) be the category of non-negatively graded coaugmented, conilpotent dg cooperads (resp. cyclic cooperads).
For $\COp\in \dgOpc$ we denote the category of conilpotent right $\COp$-comodules by $\Modc_{\COp}$.
Let $\dgPairOp^c$ be the category of pairs $(\COp, \MOp)$, with $\COp\in \dgOpc$ and $\MOp\in \Modc_{\COp}$.
There are cofree/forgetful adjunctions 
\begin{align*}
    \dgOpc &\rightleftarrows \Seq\dgVect_{\geq 0}
    &
    \dg\cycOp^c &\rightleftarrows \CSeq\dgVect_{\geq 0}
    \\
    \dg\PairOp^c &\rightleftarrows \Seq\dgVect_{\geq 0} \times \Seq\dgVect_{\geq 0}
    &
    \dg\Modc_{\COp} &\rightleftarrows \Seq\dgVect_{\geq 0}
      .
\end{align*}
Here the right-adjoints are defined by the cofree cooperad (resp. cyclic cooperad, comodule) functors. The left-adjoints are the forgetful functors -- just mind that since we work with coaugmented cooperads the forgetful functors take the coaugmentation coideal.
We may define model category structures on all categories above by left transfer along the above adjunctions.
For the case of $\dgOpc$ it has been verified in \cite{Frextended} that this yields a well-defined cofibrantly generated model category structure. The results are extended to pairs and comodules in \cite{FWColored}.
Finally, the case of cyclic operads cannot be found in the literature, but can be extracted from \cite{Frextended} by just replacing cooperads by cyclic cooperads.

We also define pointed comodules and pointed pairs, analogously to the previous section.
We consider the following object of $\dgPairOp^c$
\[
\peis^* = (\eis^*, \meis^*)    
\]
with
\[
\meis^*((r))=
\begin{cases}
    \Q & \text{for $r=2$} \\
    \emptyset & \text{otherwise}
\end{cases},
\]
with $\Q$ considered with trivial $S_2$-action.
The object $\peis^*$ is cofree and hence fibrant in $\dgPairOp^c$.
We define the category of pointed comodule pairs as the over-under-category 
\[
\dgptPairOp^c =  (\dgPairOp^c)_{/ \peis^*}^{\peis^*/}
\]
with the slice category model structure. We also define, for a fixed dg Hopf cooperad $\COp$, the cofree right $\COp$-comodule
\[
\eis_{\COp}^* := \FreeMod^c_{\COp} \meis.    
\] 
Then we equip the over-under-category 
\[
    \dgptMod^c_{\COp} := (\dgModc_{\COp})_{/ \peis^*}
\]
with the slice category model structure.

\subsubsection{Dg Hopf cooperads}
Let $\HOpc$ be the category of (non-cyclic, conilpotent) dg Hopf cooperads, i.e., cooperads in the underlying category of dg commutative algebras. Similarly, let
$\cycHOpc$ be the category of cyclic dg Hopf cooperads.
For a dg Hopf cooperad $\COp$ denote the category of right Hopf $\COp$-comodules by $\dgHModc_{\COp}$.
Let $\PairHOpc$ the category of pairs $(\COp, \MOp)$ consisting of a non-cyclic dg Hopf cooperad $\COp$ and a right $\COp$ dg Hopf comodule $\MOp$.
One has adjunctions
\begin{equation}\label{equ:hopf forget adj}
\begin{aligned}
    \dgOpc &\rightleftarrows \HOpc
    &
    \dg\cycOp^c &\rightleftarrows \cycHOpc
    \\
    \dgPairOp^c &\rightleftarrows \PairHOpc
    &
    \dg\Modc_{\COp} &\rightleftarrows \dgHModc_{\COp},
\end{aligned}
\end{equation}
with the left-adjoint the forgetful functor forgetting the Hopf (i.e., commutative algebra) structure, and the right adjoint the arity-wise symmetric algebra functor, see \cite[section 1.5]{Frextended} and \cite[II.9.3]{Frbook}. 
It has been shown by Fresse \cite{Frextended} that by right transfer along the above adjunction one may define a cofibrantly generated model structure on $\HOpc$. Fresse's construction readily extends to the cyclic setting to define a model structure on $\cycHOpc$.
It also extends to colored dg Hopf cooperads, in particular to pairs, and can be used to endow $\dgHModc_{\COp}$ and $\PairHOpc$ with cofibrantly generated model structures, see \cite{FWColored}.

We also define define the corresponding pointed versions.
Note that the object $\peis^*$, $\meis^*$ and $\eis_{\COp}^*$ of the previous subsection carry natural arity-wise dgca structures and can be considered as objects in the respective category of dg Hopf objects.
We then define the category of pointed Hopf pairs as the over-category 
\[
\ptPairHOpc =  \PairHOpc_{/ \peis^*}
\]
with the slice category model structure. Next, fix a dg Hopf cooperad $\COp$. THen we equip the over-category 
\[
    \pt\dgHModc_{\COp} := (\dgHModc_{\COp})_{/ \peis^*}
\]
with the slice category model structure.
Note that for Hopf cooperads we do not need to work with over-under-categories, since the coaugmentation is implicitly part of the Hopf structure, in the form of the inclusion of the commutative algebra units.


\subsection{Fiber sequences for operad-module pairs}

\begin{prop}\label{prop:pair op op fibration}
Let $(\POp, \MOp)$ be a cofibrant object of $\dgPairOp$ or $\dgptPairOp$, and let $(\QOp, \NOp)$ be a fibrant object of the same category. Then the projection to the operadic part 
\begin{equation}\label{equ:pair op op fibration}
\Map_{\dg(\pt)\PairOp}\left((\POp, \MOp), (\QOp, \NOp)\right)  
\to 
\Map_{\dg\Op}\left(\POp, \QOp \right)  
\end{equation}
is an $\sSet$-fibration. The fiber over a morphism $f:\POp\to \QOp$ is 
\[
    \Map_{\dg(\pt)\Mod_{\POp}}\left(\MOp, f^*\NOp \right).
\]
\end{prop}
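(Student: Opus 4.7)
The plan is to exhibit \eqref{equ:pair op op fibration} as the map on $\sSet$-mapping spaces induced by a particular cofibration in the source whose target is fibrant; via the standard enrichment of our model categories over $\sSet$ (using functorial simplicial framings), such a map is automatically an $\sSet$-fibration.

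First I would exhibit a left adjoint $\iota_!$ to the forgetful functor $U\colon \dgptPairOp \to \dgOp$ sending an augmented operad $\POp$ to the pair
\[
\iota_!\POp \;=\; (\POp,\meis),
\]
where $\meis$ carries the $\POp$-module structure coming from the augmentation $\POp\to\eis$ (in the non-pointed case one instead takes $(\POp,0)$ with $0$ the initial module). Fibrations and weak equivalences in $\dgptPairOp$ and in $\dgOp$ are both detected on the underlying symmetric sequences, so $U$ preserves them; hence $(\iota_!,U)$ is a Quillen adjunction and the hom-set adjunction promotes to a natural identification
\[
\Map_{\dgptPairOp}\bigl(\iota_!\POp, (\QOp,\NOp)\bigr) \;\cong\; \Map_{\dgOp}(\POp,\QOp)
\]
of simplicial mapping spaces.

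Next I would show that for cofibrant $(\POp,\MOp)$ the canonical map $\iota_!\POp\to(\POp,\MOp)$ is a cofibration. The generating cofibrations of $\dgPairOp$, transferred along the forgetful functor to symmetric sequences, split into two classes: those attaching ``operad cells'' and those attaching ``right module cells''. A cellular presentation of the cofibrant pair can thus be arranged so that all operad cells are attached first, yielding $\iota_!\POp$, while the subsequent module cells exhibit $\iota_!\POp\to(\POp,\MOp)$ as a relative cell complex, hence a cofibration. Applying $\Map\bigl(-,(\QOp,\NOp)\bigr)$ to this factorization, invoking fibrancy of $(\QOp,\NOp)$ together with SM7, and composing with the adjunction isomorphism above yields the sought $\sSet$-fibration.

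It remains to identify the fiber over $f\colon\POp\to\QOp$. Computing simplicial mapping spaces through a functorial path object $(\QOp,\NOp)^{\Delta^n}$ (whose operadic and module components are the corresponding path objects of $\QOp$ and $\NOp$), an $n$-simplex in the fiber is a pair morphism $(\POp,\MOp)\to(\QOp,\NOp)^{\Delta^n}$ whose operadic component is the constant cosimplicial extension of $f$; the module component is then precisely a (pointed) $\POp$-module map $\MOp \to f^*(\NOp^{\Delta^n}) \cong (f^*\NOp)^{\Delta^n}$, where the isomorphism holds because the path-object construction commutes with restriction of scalars. This identifies the fiber with $\Map_{\dg(\pt)\Mod_\POp}(\MOp,f^*\NOp)$. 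The only mildly delicate point is the cellular splitting in the second step, but this is routine once one spells out the generating cofibrations of $\dgPairOp$ under the transferred model structure; everything else is formal.
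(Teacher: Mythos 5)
Your overall strategy is sound and is precisely dual to the paper's: the paper realizes \eqref{equ:pair op op fibration} by applying the right Quillen functor $\Map_{\dg(\pt)\PairOp}((\POp,\MOp),-)$ to the fibration $(\QOp,\NOp)\to(\QOp,*)$ onto the terminal module, whereas you apply $\Map(-,(\QOp,\NOp))$ to a cofibration $\iota_!\POp\to(\POp,\MOp)$ out of the free pair on $\POp$; both reduce to the SM7 axiom, and your identification of the fiber via path objects is fine. In the non-pointed case your argument works as written: $(\POp,0)$ is indeed the left adjoint, and $(\POp,0)\to(\POp,\MOp)$ is arity- and degreewise injective, hence a cofibration.

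In the pointed case, however, your formula for the left adjoint is wrong, and this breaks the key identification $\Map_{\dgptPairOp}(\iota_!\POp,(\QOp,\NOp))\cong\Map_{\dgOp}(\POp,\QOp)$. A morphism of pairs $(\POp,\meis)\to(\QOp,\NOp)$ must have a module component $g:\meis\to f^*\NOp$ that is a map of $\POp$-modules; since the $\POp$-action on $\meis$ factors through the augmentation and $\meis$ vanishes outside arity $2$, this forces $g(1)\circ_i f(p)=\epsilon(p)\,g(1)$ for unary $p$ and $g(1)\circ_i f(p)=0$ for $p\in\bar\POp$ of arity $\geq 2$, where $g(1)$ is the pointing element of $\NOp$. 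These are nontrivial constraints on $f$ (already violated when $\NOp=\QOp^{mod}$ for a cyclic operad $\QOp$, since there $1\circ_{i,0}q\neq 0$), so $(\POp,\meis)$ does not corepresent the functor $\Map_{\dgOp}(\POp,U(-))$. The correct left adjoint is $\iota_!\POp=(\POp,\eis_\POp)$ with $\eis_\POp=\FreeMod_\POp(\meis)$ the free pointed module --- exactly the object the paper introduces for this purpose, and also exactly what your own cell-rearrangement argument produces after attaching all operad cells to $\peis$. With this correction the rest of your proof goes through: the counit $(\POp,\eis_\POp)\to(\POp,\MOp)$ is a cofibration for cofibrant $(\POp,\MOp)$ (by the rearranged cellular presentation, extended to retracts using naturality of the counit), and the fiber is then the space of module maps under $\eis_\POp$ and over $\meis$, i.e.\ $\Map_{\dg\pt\Mod_{\POp}}(\MOp,f^*\NOp)$, as required.
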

\begin{proof}
We first note that 
\[
    \Map_{\dg(\pt)\Op}\left(\POp, \QOp \right)  
    =
    \Map_{\dg(\pt)\PairOp}\left((\POp, \MOp), (\QOp, *)\right).   
\]
Next, the map $(\QOp, \NOp)\to (\QOp, *)$ is a fibration, because $\NOp$ is fibrant by assumption.
But the functor 
\[
    \Map_{\dg(\pt)\PairOp}\left((\POp, \MOp), -\right):
\dg(\pt)\PairOp \to \sSet
\] 
is right Quillen for $(\POp, \MOp)$ a cofibrant object, see \cite[Proposition 3.2.12]{Frbook}. 
Applying this functor to the fibration $(\QOp, \NOp)\to (\QOp, *)$ we hence obtain that \eqref{equ:pair op op fibration} is a fibration.

Consider now the pointed setting, the non-pointed setting is similar.
Fix a map of augmented dg operads $f:\POp\to \QOp$ and consider the fiber over this map in the fibration \eqref{equ:pair op op fibration}. 
It is identified with the mapping space in the over-under-category
\[
\Map_{\dg\PairOp_{(\QOp,\meis)}^{\peis}}((\POp,\MOp),(\QOp, \NOp)),
\]
where we understand $(\POp,\MOp)$ as the object 
\[
\peis \to (\POp,\MOp) \xrightarrow{f} (\QOp,\meis).
\]
But a morphism $(\POp,\MOp) \to (\QOp,\NOp)$ in this over-under-category is the same data as a morphism of $\POp$-modules $\phi: \MOp\to f^*\NOp$ that satisfies into a commutative diagram of $\POp$-module morphisms 
\[
\begin{tikzcd}
    & \eis_{\POp} \ar{dl}\ar{dr} & \\
    \MOp \ar{rr}{\phi}\ar{dr}& & \NOp\ar{dl} \\
    & \meis & 
\end{tikzcd}.
\]
But this is the same data as a morphism in $\dg\pt\Mod_{\POp}$.
Here we also used that for $\POp$ a fixed unital operad a morphism $\peis\to (\POp, \MOp)$ is the same data as a morphism of $1$-modules (i.e., symmetric sequences) $\meis\to \MOp$, and by adjunction also the same data as a morphism of $\POp$-modules $\eis_{\POp}\to \MOp$.

\end{proof}

The analogous result for dg Hopf cooperads also holds.
\begin{prop}\label{prop:pair opc opc fibration}
    Let $(\AOp, \MOp)$ be a cofibrant object of $\PairHOpc$ or $\pt\PairHOpc$, and let $(\BOp, \NOp)$ be a fibrant object of the same category. Then the projection to the cooperadic part 
    \begin{equation}\label{equ:pair opc opc fibration}
    \Map_{(\pt)\PairHOpc}\left((\AOp, \MOp), (\BOp, \NOp)\right)  
    \to 
    \Map_{\HOpc}\left(\AOp, \BOp \right)  
    \end{equation}
    is an $\sSet$-fibration. The fiber over a morphism $f:\AOp\to \BOp$ is 
    \[
        \Map_{(\pt)\dgHModc_{\BOp}}\left(f_*\MOp, \NOp \right).
    \]
    \end{prop}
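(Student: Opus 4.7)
The plan is to mirror the argument of Proposition~\ref{prop:pair op op fibration}, dualised to the Hopf cooperad setting. First I would identify
\[
\Map_{\HOpc}(\AOp,\BOp) \;\cong\; \Map_{(\pt)\PairHOpc}\bigl((\AOp,\MOp),(\BOp,T_{\BOp})\bigr),
\]
where $T_{\BOp}$ denotes the terminal (pointed) Hopf $\BOp$-comodule, so that mapping into the second slot is forced. The canonical map $(\BOp,\NOp)\to(\BOp,T_{\BOp})$ is the identity on cooperads and the terminal projection on comodules, and I would argue it is a fibration in $(\pt)\PairHOpc$ as a consequence of fibrancy of $\NOp$. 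Since $(\AOp,\MOp)$ is cofibrant, the functor $\Map_{(\pt)\PairHOpc}((\AOp,\MOp),-)$ is simplicial right Quillen (the Hopf cooperadic analogue of \cite[Proposition 3.2.12]{Frbook}), so it sends the above fibration to the claimed Kan fibration in~\eqref{equ:pair opc opc fibration}.

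For the fiber identification over $f:\AOp\to\BOp$ I would directly unwind the definition of a morphism in the (over-)pair category: such a morphism lifting $f$ consists precisely of a morphism of Hopf $\BOp$-comodules $f_*\MOp\to\NOp$, where $f_*\MOp$ denotes $\MOp$ equipped with the $\BOp$-comodule structure obtained by postcomposing its coaction with $f$. In the pointed case one additionally requires compatibility with the augmentations to $\meis^*$. This yields the claimed description $\Map_{(\pt)\dgHModc_{\BOp}}(f_*\MOp,\NOp)$ of the fiber.

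The main obstacle I anticipate is the fibration claim in the first paragraph: in the left-transferred model structure on $\PairHOpc$, fibrations are not detected by the forgetful functor to underlying symmetric sequences, in contrast to the right-transferred operadic case. I would extract the needed characterisation of pair fibrations that are identity on the cooperad component from the explicit construction of the model structure in \cite{FWColored}; the relevant point is that the subcategory of pairs with fixed cooperad $\BOp$ inherits the model structure of $\dgHModc_{\BOp}$, so that $\NOp\to T_{\BOp}$ being a fibration there transports to a fibration in $\PairHOpc$.
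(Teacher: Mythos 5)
Your plan is exactly the paper's proof: the paper disposes of this proposition in one line by declaring it analogous to Proposition~\ref{prop:pair op op fibration}, and your three steps (mapping into the terminal comodule pair, right Quillen-ness of $\Map_{(\pt)\PairHOpc}((\AOp,\MOp),-)$ on a cofibrant source, unwinding the fiber as $\BOp$-comodule maps out of $f_*\MOp$) are precisely that dualisation. Your closing remark correctly isolates the one point where the dualisation is not verbatim --- fibrations in the left-transferred structure are not detected on underlying symmetric sequences --- and deferring that to the construction in \cite{FWColored} is consistent with the level of detail the paper itself supplies.
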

    \begin{proof}
The proof is analogous to that of Proposition \ref{prop:pair op op fibration}.
    \end{proof}

\subsection{Adjunctions between cyclic and non-cyclic (co)operads}
\label{sec:op cycop adj}
The forgetful functor $\Res$ from cyclic operads to operads fits into an adjunction 
\[
I \colon \dg\Op \rightleftarrows \dg\cycOp \colon \Res.
\]
The left-adjoint $I$ is defined such that for a non-cyclic dg operad $\QOp$
\[
I(\QOp) = \Free_{cyc}(\Ind \bar \QOp)/\sim
\]
is the free cyclic operad generated by the coaugmentation coideal $\bar \QOp$ modulo the cyclic operadic ideal generated the following relation:
\begin{itemize}
\item Let $q,q'\in \bar \QOp$ two elements and let $\tilde q$, $\tilde q'$ be the corresponding elements in $\Ind \bar \QOp\subset I(\QOp)$. Then the cyclic composition in $I(\QOp)$ satisfies 
\[
\tilde q \circ_{i,0} \tilde q' = 
\widetilde{q\circ_i q'}.
\]
\end{itemize}

On the other hand the forgetful functor $\Res$ also has a right adjoint. (See \cite{DruColeHackney} for a discussion of both adjoints, albeit in more general setting than is relevant here.)
We will actually need the other adjoint for dg cooperads, where it fits into an adjunction 
\[
\Ind_1 \colon \dgOpc \rightleftarrows \dgCycOp^c \colon \Res.  
\]
Here the right-adjoint is again the forgetful functor. 
The left adjoint sends a coaugmented dg cooperad $\COp$ to the coaugmented cyclic dg cooperad obtained by adjoining a counit to the non-counital cyclic cooperad $\overline{\Ind_1(\COp)}$ defined such that
\[
    \overline{\Ind_1(\COp)}((r))
    = (\Ind \bar\COp)((r))
    = \bar \COp(r-1) \otimes_{\bbS_{r-1}} \Q[\bbS_r].
\]
In other words, we apply the induction functor $\Ind$ from section \ref{sec:symseq} aritywise to $\bar C$.

\section{Bar constructions}

\subsection{Cyclic bar and cobar construction}
A one-shifted cooperad is a cooperad whose cocomposition has cohomological degree $+1$.
Let $\POp$ be an augmented dg operad. Then we define the bar construction of $\POp$ as the 
cofree 1-shifted cooperad cogenerated by the augmentation ideal $\bar \POp$,
\[
\Bar \POp = (\Free_{\kk}^c \bar \POp, D),    
\]
with a differential $D$ encoding the dg operad structure on $\POp$.
More concretely, elements of $\Free_{\kk}^c \bar \POp$ can be seen as linear combinations of trees whose vertices are decorated by copies of $\bar \POp$. Explicitly, we have the formula 
\begin{equation}\label{equ:bar def}
\Free_{\kk}^c \bar \POp(r) = \bigoplus_{T} (\otimes_{e\in ET} \Q[1])\otimes (\otimes_T \bar \POp)  ,  
\end{equation}
where the sum is over all rooted trees with $r$ numbered leaves, $\otimes_T$ is the tree-wise tensor product and the final tensor product runs over the internal edges of $T$. Note that the last factor only contributes an overall degree shift.
The cooproduct is by de-grafting (cutting) trees. This operation is of degree $+1$ since one edge is removed, which carries degree $-1$ by the last factor in the formula above.

The differential $D$ on $\Bar \POp$ has two terms, $D=d_{\bar \POp}+d_c$, with $d_{\bar \POp}$ induced by the internal differential on $\POp$, while $d_c$ acts on a decorated tree by contracting one edge, operadically composing the decorations at the adjacent vertices.

\begin{rem}
We remark that we deviate here from the standard convention to define the bar construction as the cofree cooperad cogenerated by $\bar \POp[1]$. Our convention differs just by an overall degree shift.
\end{rem}
Let $\COp$ be a 1-shifted coaugmented cooperad. Then we define the cobar construction of $\COp$ as the free operad generated by the coaugmentation coideal $\bar \COp$ 
\[
    \Bar^c \COp = (\Free \bar \COp, D), 
\]
with a differential $D$ encoding the dg cooperad structure on $\COp$.

Similarly, let $\POp$ be an augmented cyclic dg operad. Then the bar construction of $\POp$ is the cofree 1-shifted cyclic cooperad cogenerated by the augmentation ideal $\bar \POp$,
\[
\Bar \POp = (\Free_\kk^c \bar \POp, D),    
\]
with a differential $D$ encoding the dg cyclic operad structure on $\POp$ as above.
We use the same notation as for the non-cyclic bar construction, since both are isomorphic.
That said, we also use the more verbose notation $\Bar_{cyc} \POp=\Bar\POp$ for the same object if needed to avoid confusion.

The cobar construction of a 1-shifted coaugmented dg cyclic cooperad $\COp$ is the free cyclic operad generated by $\bar \COp$, 
\[
\Bar^c \COp = (\Free \bar \COp, D),    
\]
with a differential $D$ encoding the dg cyclic cooperad structure on $\COp$.

In either case one has a natural quasi-isomorphism 
\[
    \Bar^c \Bar \POp \to \POp 
\]
for any (cyclic) augmented dg operad $\POp$. Furthermore, $\Bar^c \Bar \POp$ is a cofibrant object in the category $\dgOp$ or $\dgCycOp$ respectively, see \cite[Theorem 2]{DehlingVallette}. We also apply the bar and cobar construction to non-unital (co)operads. In this case we just do not pass to the (co)augmentation (co)ideals in the formulas above.

The analogous constructions also work for colored operads.
In particular, a pair $(\POp,\MOp)$ consisting of a (non-cyclic, augmented) operad $\POp$ and a right $\POp$-module $\MOp$ can be considered as a two-colored operad, with $\POp$ the operations of input and output color 1, and $\MOp$ the operations of input color 1 and output color 2.
Their colored operadic bar construction is again a colored 2-colored 1-shifted cooperad associated to the pair 
\[
\Bar(\POp,\MOp) = (\Bar \POp, \Bar_{\POp} \MOp),    
\]
with 
\[
\Bar_{\POp} \MOp =(\Free_{\Bar \POp}^c \MOp, D)
\]
the cofree $\Bar \POp$-comodule cogenerated by $\MOp$, with a differential encoding the $\POp$-module structure on $\MOp$.

\subsection{The cobar construction often preserves weak equivalences}

\begin{lemma}\label{lem:cyclic cobar}
    Let $F: \COp\to \DOp$ be a morphism of 1-shifted cyclic dg cooperads.
    Assume that $\bar\COp$. $\bar\DOp$ are equipped with exhaustive ascending filtrations 
    \begin{align*}
    0 &= \mF^0\bar\COp \subset \mF^1\bar\COp \subset \cdots
    &
    0 &= \mF^0\bar\DOp \subset \mF^1\bar\DOp \subset \cdots
    \end{align*}
    compatible with the cooperad structure and the morphism $F$.
    Suppose that the morphism $F$ induces a quasi-isomorphism on the associated graded dg symmetric sequences with respect to the above filtrations
    \[
      F_0: \gr \bar \COp \to \gr \bar \DOp.
    \]
    Then the morphism 
    \[
    F: \Bar^c(\COp) \to \Bar^c(\DOp)    
    \]
    is a quasi-isomorphism of cyclic operads.
    \end{lemma}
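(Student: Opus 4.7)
The plan is a two-stage spectral sequence argument. First I would transfer the given filtrations on $\bar\COp$, $\bar\DOp$ to filtrations on $\Bar^c(\COp)=(\Free\bar\COp, D)$ and $\Bar^c(\DOp)$ by declaring a tree $T$ with vertices decorated by elements $c_1,\dots,c_v \in \bar\COp$ to sit in filtration degree $\sum_i \mathrm{fil}(c_i)$. Compatibility of $\mF^\bullet\bar\COp$ with the cyclic cooperad cocomposition ensures that the cobar differential $D$, whose two summands are the internal differential and the cocomposition-dual, preserves this induced filtration; the morphism $\Bar^c(F)$ is filtration-preserving by hypothesis. Passing to associated graded yields a natural isomorphism
\[
\gr\Bar^c(\COp) \cong \Bar^c(\gr\bar\COp)
\]
of cyclic dg operads under which $\gr\Bar^c(F)$ is identified with $\Bar^c(F_0)$.

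By the standard spectral sequence comparison, it then suffices to verify the reduced statement that any quasi-isomorphism $G\colon \COp' \to \DOp'$ of 1-shifted cyclic dg cooperads induces a quasi-isomorphism $\Bar^c(G)$, and apply it to $G=F_0$. For this reduced claim I would introduce a second spectral sequence filtering the cobar construction by the number of internal tree vertices. Only the internal differential preserves the vertex count; the cocomposition-dual strictly raises it. On the $E_0$-page the differential is purely internal, and in each cyclic arity the complex decomposes as a direct sum over isomorphism classes of trees $T$ of tensor products $\bigotimes_{v\in V(T)} \bar\COp'((\mathrm{val}(v)))$ (up to symmetry and shifts), so a Künneth argument reduces us precisely to $G$ being an arity-wise quasi-isomorphism.

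The main technical point to nail down is strong convergence of both spectral sequences on each cyclic arity component. Here the cyclic setting is essential: the coaugmentation coideal $\bar\COp$ of a cyclic cooperad is concentrated in arities $((r))$ with $r\geq 3$, because the cyclic counit lives in arity $((2))$. A tree with $r$ external legs in which every internal vertex has valence at least $3$ has at most $r-2$ internal vertices, by the handshake identity $3v \leq \sum_{w} \mathrm{val}(w) = r + 2(v-1)$. Hence each cyclic arity $((r))$-component of $\Bar^c(\COp)$ is a finite direct sum over trees with a uniformly bounded vertex number, both filtrations are bounded in each arity, and so each spectral sequence converges strongly. Combining the two stages gives the claim; the only subtlety I anticipate is bookkeeping with the shifts and $S_r$-equivariance in the Künneth step, but this is routine.
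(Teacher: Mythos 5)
Your overall architecture — transfer the given filtration to $\Bar^c(\COp)$, identify the associated graded with $\Bar^c(\gr\bar\COp)$, then run a second spectral sequence by vertex count and finish with K\"unneth — is exactly the paper's argument. However, your justification of convergence contains a genuine error. You claim that the coaugmentation coideal $\bar\COp$ of a cyclic cooperad is concentrated in arities $((r))$ with $r\geq 3$, so that all tree vertices are at least trivalent and the handshake identity bounds the number of vertices by $r-2$. This is false: the coaugmentation only splits off a one-dimensional counit inside $\COp((2))$, and $\bar\COp((2))=\COp((2))/\Q$ is in general nonzero (e.g.\ $\overline{\BV}((2))$ is spanned by the BV operator; in the paper's actual application the generators $\Ind\overline{\Bar\POp^{nc}}\oplus\overline{\Bar\POp^{mod}}$ also have nontrivial arity-$((2))$ parts). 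Once bivalent vertices are allowed, a fixed cyclic arity component of $\Bar^c(\COp)$ is an infinite direct sum over trees with arbitrarily long chains of bivalent vertices, your vertex filtration is unbounded, and strong convergence fails exactly where you need it.

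This is precisely why the lemma carries the hypothesis $\mF^0\bar\COp=0$, which your argument never uses. The correct finiteness statement is the paper's: in the associated graded piece $\gr_{\mF}^q\Bar^c(\COp)$ of the \emph{first} filtration, every vertex decoration contributes filtration degree at least $1$, so a contributing tree has at most $q$ vertices; hence the vertex-count filtration $\mG$ is bounded on each $\gr_{\mF}^q$ and its spectral sequence converges. (The two Examples following the lemma show how to manufacture such a filtration when $\bar\COp((2))\neq 0$, e.g.\ by arity minus degree, or by the number of bar-construction vertices.) Replacing your handshake argument by this observation repairs the proof; the rest of your proposal is sound.
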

    \begin{proof}
        We equip $\Bar^c(\COp)$ and $\Bar^c(\DOp)$ with the exhaustive ascending filtrations induced by the filtrations $\mF^\bullet\COp$ and $\mF^\bullet\DOp$.
        It then suffices to check that the induced map on the associated graded objects
        \[
        \gr_{\mF} F :\gr_{\mF} \Bar^c(\COp) \to \gr_{\mF}\Bar^c(\DOp) 
        \]
        is a quasi-isomorphism of dg symmetric sequences.
        The differential on the associated graded has the form $d_{\COp}+d_{split}$ (resp. $d_{\DOp}+d_{split}$) with the first term induced by the internal differentials on $\COp$ and $\DOp$ and the second term given by the cooperadic cocomposition.
        We filter the associated graded in turn with a descending filtration on the number of vertices in the trees appearing in the free operad construction.
        Concretely, we have that 
        \begin{align*}
        &\mG^p \gr_{\mF}^q\Bar^c(\COp) & &\mG^p\gr^q_{\mF}\Bar^c(\DOp) 
        \end{align*}
        are the subspaces of linear combinations of trees with $\geq p$ vertices.
        The important point is now that for each fixed $q$ the number of possible such trees is finite due to the properties of the filtration $\mF$.
        Hence the filtration $\mG$ is bounded and the associated spectral sequence converges.
        By the assumption on $F$ and Künneth formula we hence see that $F$ induces a quasi-isomorphism on the associated graded complexes 
        \[
            \gr_{\mF}^p \gr_{\mF}^q\Bar^c(\COp) 
            \to 
            \gr_{\mF}^p \gr_{\mF}^q\Bar^c(\DOp).
        \] 
        Hence by the spectral sequence comparison theorem (applied twice) we conclude that $F$ is indeed a quasi-isomorphism.
        \end{proof}
\begin{ex}
    If $\bar \COp(1)$ and $\bar \DOp(1)$ are concentrated in degrees $\leq -1$, then the filtration in the Lemma may be taken to be by arity minus the cohomological degree minus one.
    \end{ex}
    
    \begin{ex}
    Another example is that $\COp=\Bar\POp$, $\DOp=\Bar\QOp$ are bar constructions. Then the filtrations in the lemma may be taken to be by the numbers of vertices in the trees in the definition of the bar construction.
    \end{ex}

\subsection{Auxiliary symmetric sequences}
\label{sec:auxseq}
We define an auxiliary dg cyclic sequence $\MP$ such that
\[
\MP((r)) = \vspan\{c , \partial_1,\dots, \partial_r\}    
\]
is spanned by symbols $\partial_j$ of cohomological degree $-1$ and a symbol $c$ of degree $0$.
The action of $\bbS_r$ is by permuting the $\partial_j$ and trivial on $c$.
The differential $\partial$ is defined such that 
\begin{align*}
    \partial \partial_j &= c 
    &
    \partial c &= 0.
\end{align*}
We also define the cyclic sequences 
\begin{align*}
\MP'((r)) &:= \vspan(\partial_1,\dots, \partial_r)
\subset \MP((r))
\\
\MMP((r)) &:= H(\MP((r)))
\cong \vspan\{\partial_1-\partial_2,\partial_1-\partial_3,\dots, \partial_1- \partial_r\} \subset \MP'((r)).
\end{align*}

These cyclic sequences have the following meaning.
For a dg cyclic sequence $\COp$ we consider the counit of the induction/restriction adjunction of section \ref{sec:symseq},
\[
\Ind(\Res\COp) \to \COp.    
\]
Then the cyclic sequence
$$
\Ind(\Res\COp) \cong  \MP'[-1]\otimes \COp
$$ 
can be identified with the arity-wise tensor product with the degree shifted cyclic sequence $\MP'$.
Furthermore, we have that 
$$
\MP\otimes \COp \cong\cone(\Ind(\Res\COp) \to \COp) 
$$
can be identified with the mapping cone of the counit morphism.

\subsection{Comparison of bar constructions}
Let $\POp$ be an augmented cyclic dg operad.
Then we have defined two different bar constructions for $\POp$: The (cyclic) operadic bar construction $\Bar\POp$ of $\POp$ and the module bar construction $\Bar\POp^{mod}$ of $\POp^{mod}$ as a right $\POp$-module.
Our next goal is to compare these two constructions.
Both $\Bar\POp$ and $\Bar\POp^{mod}$ have natural coaugmentations, and it is sufficient to compare the coaugmentation coideals $\overline{\Bar\POp}$ and $\overline{\Bar\POp^{mod}}$.

\begin{prop}\label{prop:MMPs}
    For $\POp$ an augmented cyclic dg operad there is a chain of quasi-isomorphisms of dg symmetric sequences
    \[
        \overline{\Bar\POp^{mod}}
        \xrightarrow{f} 
        \MP \otimes \overline{\Bar\POp}  
    \leftarrow 
    \MMP \otimes
    \overline{\Bar\POp},
    \]
    with $\MP$ and $\MMP$ the symmetric sequences of section \ref{sec:auxseq}
\end{prop}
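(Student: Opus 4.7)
My plan splits into two pieces, one for each arrow. The right-hand arrow $\MMP \otimes \overline{\Bar\POp} \to \MP \otimes \overline{\Bar\POp}$ is immediate: by construction $\MMP = H(\MP)$ embeds in $\MP$ as cycle representatives, so the inclusion $\MMP \hookrightarrow \MP$ is a quasi-isomorphism of dg cyclic sequences, and arity-wise tensoring with $\overline{\Bar\POp}$ over $\Q$ preserves quasi-isomorphisms by K\"unneth.

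For the left-hand map $f$, the key is to exploit the identification $\MP \otimes \overline{\Bar\POp} \cong \cone(\Ind\Res\overline{\Bar\POp} \to \overline{\Bar\POp})$ from Section \ref{sec:auxseq}. Combinatorially, an element of $\overline{\Bar\POp^{mod}}(r)$ is a rooted decorated tree with $r$ labeled leaves, and this is the same data as an unrooted cyclic decorated tree with $r+1$ legs together with a distinguished ``root leg''; under the isomorphism $\Ind\Res\overline{\Bar\POp}((r+1)) \cong \MP'[-1]((r+1)) \otimes \overline{\Bar\POp}((r+1))$, the index of the distinguished leg is encoded by the $\partial_j$-summand. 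I would define $f$ by sending a rooted tree to its image in the $\Ind\Res\overline{\Bar\POp}[1]$ summand of the cone under this correspondence, with a correction term in the $\overline{\Bar\POp}$ summand arranged so that $f$ is a chain map.

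To prove $f$ is a quasi-isomorphism, I would filter both sides by the number of internal vertices of the underlying decorated tree and compare the associated spectral sequences. The filtrations are exhaustive and bounded below, so the spectral sequences converge. On the associated graded the edge-contraction parts of the bar differentials vanish, since they strictly decrease the number of vertices; what remains is the internal differential of $\bar\POp$ at each vertex and, on the right-hand side, the $\MP$-differential $\partial_j \mapsto c$. A tree-by-tree K\"unneth calculation combined with $\MMP = H(\MP)$ then identifies the two cohomologies, in a manner closely parallel to the proof of Lemma \ref{lem:cyclic cobar}.

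The main obstacle I anticipate is writing down the map $f$ explicitly with correct signs and the correction term, since the cone differential couples $\Ind\Res\overline{\Bar\POp}[1]$ and $\overline{\Bar\POp}$ through the counit map, and matching this cross-term with the edge-contraction terms of the module bar differential at edges incident to the root vertex requires careful bookkeeping. Once $f$ is in hand, the spectral sequence comparison proceeds along standard lines.
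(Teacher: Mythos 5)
Your treatment of the right-hand arrow is fine, but the construction of $f$ rests on a false identification. An element of $\overline{\Bar\POp^{mod}}((r))$ is \emph{not} the same data as an unrooted tree in $\overline{\Bar\POp}$ with a distinguished leg: the extra datum in the module bar construction is a distinguished \emph{vertex}, decorated by the full $\POp=1\oplus\bar\POp$ (including the unit), while all other vertices carry $\bar\POp$; there is also no arity shift from $r$ to $r+1$. Only in the degenerate case where the marked vertex is a bivalent unit adjacent to the $j$-th leg does the datum reduce to "a choice of leg $j$", i.e.\ to a $\partial_j$-summand of $\MP'\otimes\overline{\Bar\POp}\cong\Ind(\Res\overline{\Bar\POp})[1]$. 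Consequently your recipe -- send a tree to "its image in the $\Ind\Res$-summand determined by the distinguished leg, plus a correction in $\overline{\Bar\POp}$" -- is not well defined on a generic tree (whose marked vertex is not adjacent to any leg), and it inverts the roles of the two summands of the cone. The correct map has its \emph{main} component $f'$ in the $c\otimes\overline{\Bar\POp}$ summand (forget the marking, project the decoration to $\bar\POp$), and a $\partial_j$-component $f''$ that is nonzero \emph{only} for the degenerate trees just described. The whole point of the proposition is that $\overline{\Bar\POp^{mod}}$ and $\cone(\Ind\Res\overline{\Bar\POp}\to\overline{\Bar\POp})$ are quasi-isomorphic but not isomorphic, so no bijective re-encoding of the trees can carry the argument.

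The spectral sequence step also filters too coarsely. If you filter so that \emph{all} edge contractions die on the associated graded, the left-hand side becomes $(\overline{\Bar\POp^{mod}},d_{\POp})$, whose cohomology keeps the unit-decorated marked vertex as an independent combinatorial datum and is strictly larger than $H(\MP\otimes\overline{\Bar\POp},d_{\POp}+\partial)\cong\MMP\otimes H(\overline{\Bar\POp})$; the comparison fails already at $E_1$. The differential $\partial_j\mapsto c$ on the right must be matched on the left by the contraction $d_c''$ of the edge at the unit-marked vertex, and that contraction \emph{preserves} the number of $\bar\POp$-decorated ("essential") vertices, so one must filter by essential vertices precisely so that $d_c''$ survives to the associated graded. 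The remaining, and essential, computation -- showing that $H(\overline{\Bar\POp^{mod}},d_c'')\cong\MMP\otimes\overline{\Bar\POp}$, with the class of $(\partial_1-\partial_j)\otimes T$ represented by the sum over all ways of inserting the bivalent unit-marked vertex along the path from leg $1$ to leg $j$ -- is absent from your proposal and cannot be replaced by a tree-by-tree K\"unneth argument.
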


\begin{proof}
It is clear that the second arrow is a quasi-isomorphism since the morphism $\MMP\to\MP$ is. We just need to construct the first map $f$ and show that it is a quasi-isomorphism as well. 

To this end recall that elements of $\overline{\Bar\POp^{mod}}(r)$ can be seen as linear combinations of decorated trees with one marked vertex. The special vertex is decorated by an element of $\POp$. The other vertices are decorated by elements of $\bar \POp$.
The map $f$ is defined as follows. On a decorated tree $T\in \overline{\Bar\POp^{mod}}(r)$ we have $f(T)=f'(T)+f''(T)$ with $f'(T)\in \Q c\otimes \overline{\Bar\POp}$ and $f''(T)\in \MP'\otimes \overline{\Bar\POp}$, using the basis element $c$ and the sub-cyclic sequence $\MP'\subset \MP$ of the preceding subsection.
\begin{itemize}
\item $f'(T)$ is obtained by forgetting the marking at the marked vertex, projecting its decoration to $\bar \POp$.
\[
\begin{tikzpicture}
    \node[ext,label=90:{$x$}] (v) at (0,0) {};
    \node[int] (w1) at (.7,0) {};
    \node[int] (w2) at (-.7,0) {};
    \node[int] (w3) at (0,-.7) {};
    \draw (v) edge (w1) edge (w2) edge (w3)
    (w1) edge +(.3,.3) edge +(.3,-.3)
    (w2) edge +(-.3,.3) edge +(-.3,-.3)
    (w3) edge +(.3,-.3) edge +(-.3,-.3)
    ;
\end{tikzpicture} 
\quad\quad
\mapsto 
\quad\quad
c\otimes \left(
\begin{tikzpicture}
    \node[int,label=90:{$\pi x$}] (v) at (0,0) {};
    \node[int] (w1) at (.7,0) {};
    \node[int] (w2) at (-.7,0) {};
    \node[int] (w3) at (0,-.7) {};
    \draw (v) edge (w1) edge (w2) edge (w3)
    (w1) edge +(.3,.3) edge +(.3,-.3)
    (w2) edge +(-.3,.3) edge +(-.3,-.3)
    (w3) edge +(.3,-.3) edge +(-.3,-.3)
    ;
\end{tikzpicture} 
\right)
\]

\item $f''(T)$ is zero unless the special vertex in $T$ is bivalent and connected to a leg (the $j$-th), whence we set $f''(T) = \pm\partial_j\otimes T' $, with $T'$ obtained by deleting the special vertex, applying the augmentation to its decoration.
\[
    \begin{tikzpicture}
        \node[ext,label=90:{$x$}] (v) at (0,0) {};
        \node[int] (w1) at (.7,0) {};
        \node  at (-.7,0) {$j$};
        \node (w2) at (1.2,0.1) {$\vdots$};
        \draw (v) edge +(-.5,0) edge (w1) 
        (w1) edge +(.3,.3) edge +(.3,-.3) edge +(.3,0)
        ;
    \end{tikzpicture} 
    \quad\quad
    \mapsto 
    \quad\quad
    \pm
    \epsilon(x) 
    \partial_j
    \otimes
    \left(
    \begin{tikzpicture}
        \node[int] (w1) at (.7,0) {};
        \node (w2) at (1.2,0.1) {$\vdots$};
        \node  at (-.5,0) {$j$};
        \draw 
        (w1) edge +(.3,.3) edge +(.3,-.3) edge +(.3,0) edge +(-1,0)
        ;
    \end{tikzpicture} 
    \right)
\]
To fix the sign, note that the internal edges formally carry degree $-1$ in the bar construction. The set-wise tensor product in \eqref{equ:bar def} means that implicitly we fix an ordering of the set of edges, and identify two orderings up to sign. The sign in the above formula is "+" if the edge at the marked vertex that is removed is the first in the ordering.

\end{itemize}

We claim that $f$ is a morphism of dg symmetric sequences. It is clear that $f$ is compatible with the symmetric group actions, but we have to check that it intertwines the differentials, i.e., 
\[
  D' f(T) = D f(T)  
\] 
for any decorated tree $T\in\overline{\Bar\POp^{mod}}$. The special vertex in such a tree $T$ is decorated by an element of $\POp=1\oplus \bar \POp$. 
We may assume that this special vertex in our generator $T$ is either decorated by $1$ or by $\bar \POp$. In either case we say that the \emph{essential vertices} in the tree are those that are decorated by $\bar P$. These are all non-marked vertices and the special vertex if decorated by $\bar P$.

Using this notation, let us split the differential on $\overline{\Bar\POp^{mod}}$ into terms 
\[
D = d_{\POp} + d_{c}' + d_c'',   
\]
where $d_{\POp}$ is induced by the internal differential on $\POp$, $d_{c}'$ contracts an edge between two essential vertices, and $d_c''$ contracts an edge between an essential vertex and the non-essential vertex (if the latter is present).

The differential $D'$ on $\MP\otimes \overline{\Bar\POp}$ has the form 
\[
    D'= d_{\POp} + d_c + \partial
\]
where $d_{\POp} + d_c$ stems from the differential on $\Bar\POp$ and $\partial$ is the differential on $\MP$, that is, $\partial \partial_i = c$.
We evidently have that 
\[
f(d_{\POp}T) = d_{\POp} f(T).
\]
Furthermore, since the projection $\pi$ to $\bar\POp$ commutes with the composition we have 
\[
    f(d_c'T) =  d_c f(t).  
\]
This leaves us with the verification that 
\begin{equation}\label{equ:MMPs dcpp tbs}
f(d_c''T) = \partial f(T).    
\end{equation}
Here we may suppose that the marked vertex of $T$ is decorated by $1$, otherwise both sides of \eqref{equ:MMPs dcpp tbs} are trivially 0. We then distinguish two cases:
\begin{itemize}
\item The marked vertex is adjacent to two other (essential) vertices. In this case $f(T)=0$. But we also have $f(d_c''T)=0$, since the contraction $d_c''T$ has two terms that are mapped to the same term in $\tMMP(\Bar\POp)$, but with opposite sign.
\[
    \begin{tikzpicture}
        \node[ext,label=90:{$1$}] (v) at (0,0) {};
        \node[int] (w1) at (-.7,0) {};
        \node[int] (w2) at (.7,0) {};
        \draw (v) edge (w1) edge (w2) 
        (w1) edge +(-.3,.3) edge +(-.3,-.3) edge +(-.3,0)
        (w2) edge +(.3,.3) edge +(.3,-.3) edge +(.3,0)
        ;
    \end{tikzpicture} 
    \quad 
    \xrightarrow{d_c''}
    \quad 
    \begin{tikzpicture}
        \node[ext] (w1) at (-.5,0) {};
        \node[int] (w2) at (.5,0) {};
        \draw (w1) edge (w2) 
        (w1) edge +(-.3,.3) edge +(-.3,-.3) edge +(-.3,0)
        (w2) edge +(.3,.3) edge +(.3,-.3) edge +(.3,0)
        ;
    \end{tikzpicture} 
    -
    \begin{tikzpicture}
        \node[int] (w1) at (-.5,0) {};
        \node[ext] (w2) at (.5,0) {};
        \draw (w1) edge (w2) 
        (w1) edge +(-.3,.3) edge +(-.3,-.3) edge +(-.3,0)
        (w2) edge +(.3,.3) edge +(.3,-.3) edge +(.3,0)
        ;
    \end{tikzpicture} 
    \quad 
    \xrightarrow{f}
    \quad 
    \pm
    c\otimes \left(
    \begin{tikzpicture}
        \node[int] (w1) at (-.5,0) {};
        \node[int] (w2) at (.5,0) {};
        \draw (w1) edge (w2) 
        (w1) edge +(-.3,.3) edge +(-.3,-.3) edge +(-.3,0)
        (w2) edge +(.3,.3) edge +(.3,-.3) edge +(.3,0)
        ;
    \end{tikzpicture}
    -
    \begin{tikzpicture}
        \node[int] (w1) at (-.5,0) {};
        \node[int] (w2) at (.5,0) {};
        \draw (w1) edge (w2) 
        (w1) edge +(-.3,.3) edge +(-.3,-.3) edge +(-.3,0)
        (w2) edge +(.3,.3) edge +(.3,-.3) edge +(.3,0)
        ;
    \end{tikzpicture}
    \right)
    =0
\]
\item The marked vertex is adjacent to one other vertex and a hair $j$. In this case $f(T)= \partial_j\otimes T'$ and $\partial f(T)=T'$.
$$
\begin{tikzpicture}
    \node[ext,label=90:{$1$}] (v) at (0,0) {};
    \node[int] (w1) at (.7,0) {};
    \node  at (-.7,0) {$j$};
    \node (w2) at (1.2,0.1) {$\vdots$};
    \draw (v) edge +(-.5,0) edge (w1) 
    (w1) edge +(.3,.3) edge +(.3,-.3) edge +(.3,0)
    ;
\end{tikzpicture} 
\quad\quad
\xrightarrow{f} 
\quad\quad
\partial_j
\otimes
\left(
\begin{tikzpicture}
    \node[int] (w1) at (.7,0) {};
    \node (w2) at (1.2,0.1) {$\vdots$};
    \node  at (-.5,0) {$j$};
    \draw 
    (w1) edge +(.3,.3) edge +(.3,-.3) edge +(.3,0) edge +(-1,0)
    ;
\end{tikzpicture} 
\right)
\xrightarrow{\partial}
c
\otimes
\left(
\begin{tikzpicture}
    \node[int] (w1) at (.7,0) {};
    \node (w2) at (1.2,0.1) {$\vdots$};
    \node  at (-.5,0) {$j$};
    \draw 
    (w1) edge +(.3,.3) edge +(.3,-.3) edge +(.3,0) edge +(-1,0)
    ;
\end{tikzpicture} 
\right)
$$
To fix the sign, we assume that the edge incident at the special vertex is the first in the ordering.
On the other hand, $d_c''T=T'$ has only one term, hence we conclude that \eqref{equ:MMPs dcpp tbs} holds in this case. 
$$
\begin{tikzpicture}
    \node[ext,label=90:{$1$}] (v) at (0,0) {};
    \node[int] (w1) at (.7,0) {};
    \node  at (-.7,0) {$j$};
    \node (w2) at (1.2,0.1) {$\vdots$};
    \draw (v) edge +(-.5,0) edge (w1) 
    (w1) edge +(.3,.3) edge +(.3,-.3) edge +(.3,0)
    ;
\end{tikzpicture} 
\quad\quad
\xrightarrow{d_c''} 
\quad\quad
\begin{tikzpicture}
    \node[ext] (w1) at (.7,0) {};
    \node (w2) at (1.2,0.1) {$\vdots$};
    \node  at (-.5,0) {$j$};
    \draw 
    (w1) edge +(.3,.3) edge +(.3,-.3) edge +(.3,0) edge +(-1,0)
    ;
\end{tikzpicture} 
\xrightarrow{f}
c
\otimes
\left(
\begin{tikzpicture}
    \node[int] (w1) at (.7,0) {};
    \node (w2) at (1.2,0.1) {$\vdots$};
    \node  at (-.5,0) {$j$};
    \draw 
    (w1) edge +(.3,.3) edge +(.3,-.3) edge +(.3,0) edge +(-1,0)
    ;
\end{tikzpicture} 
\right)
$$
\end{itemize}
Note that the case that the special vertex is connected to two legs does not need to be considered: This applies to only the graph 
\begin{equation*}
    \begin{tikzpicture}
        \node[ext,label=90:{$1$}] (v) at (0,0) {};
        \node (e1) at (-.7,0) {$1$};
        \node (e2) at (.7,0) {$2$};
        \draw (v) edge (e1) edge (e2)
        ;
    \end{tikzpicture}
     ,
\end{equation*}
which is however removed by passing to the coaugmentation coideal.

We next check that $f$ is a quasi-isomorphism.
To this end we introduce ascending exhaustive filtrations on $\overline{\Bar\POp^{mod}}$ and $\MP\otimes \overline{\Bar\POp}$ by the number of essential vertices in trees. Hence let $\mF^p \overline{\Bar\POp^{mod}}\subset\overline{ \Bar\POp^{mod}}$ (resp. $\mF^p(\MP\otimes \overline{\Bar \POp})\subset \MP\otimes \overline{\Bar \POp}$ ) the sub-symmetric sequences spanned by trees with $\leq p$ essential vertices.
The map $f$ is compatible with these filtrations, 
$$
f(\mF^p \overline{\Bar\POp^{mod}})\subset \mF^p(\MP\otimes \overline{\Bar \POp}).
$$
Hence it suffices to show that the associated graded morphism $$
\gr_{\mF} f : \gr_{\mF}\overline{\Bar\POp^{mod}} 
\cong 
(\overline{\Bar\POp^{mod}}, d_{\POp} + d_c'')
\to 
\gr_{\mF} (\MP\otimes \overline{\Bar \POp})
\cong 
(\MP\otimes \overline{\Bar \POp}, d_{\POp}+\partial)
$$
is a quasi-isomorphism. (Then $f$ is a quasi-isomorphism as well.) Filtering further by minus the total degree of the $\bar P$-decorations, it a fortiori suffices to show that the morphism 
$$
\gr_{\mF} f : 
(\overline{\Bar\POp^{mod}}, d_c'')
\to 
(\MP\otimes \overline{\Bar \POp}, \partial)
$$
is a quasi-isomorphism.
The cohomology of the arity $r$ part of the right-hand side is $\MMP\otimes \overline{\Bar \POp}$.
To compute the cohomology of the left-hand side, we consider a decomposition
\[
    \overline{\Bar\POp^{mod}}((r)) = U\oplus V
\]
with $V\subset \overline{\Bar\POp^{mod}}(r)$ spanned by decorated trees all of whose vertices are decorated by $\bar \POp$, and $U\subset \overline{\Bar\POp^{mod}}(r)$ is spanned by trees with the marked vertex decorated by $1$.
Then $d_c'':U\to V$ maps $U$ to $V$. It is surjective, hence the cohomology is equal to the kernel of $d_c''$.
But by inspection this is isomorphic to $\MMP\otimes \overline{\Bar \POp}((r))$ (as desired), with the generator $(e_1-e_j)\otimes T$ represented by the linear combination of trees 
\[(e_1-e_j)\otimes 
   \left( \begin{tikzpicture}[scale=.7]
        \node (e1) at (0,0) {$\scriptstyle 1$};
        \node (e2) at (2,0) {$\scriptstyle j$};
        \node[int] (v1) at (0.5,0) {};
        \node at (1,.6) {$\cdots$};
        \node at (1,-.6) {$\cdots$};
    \node[int] (v2) at (1,0) {};
    \node[int] (v3) at (1.5,0) {};
    \draw
    (v1)  edge +(0,.3) edge +(0,-.3) edge (e1) 
    (v2) edge (v1) edge (v3)  edge +(0,.3) edge +(0,-.3)
    (v3) edge (e2) edge +(0,.3) edge +(0,-.3);
    \end{tikzpicture}
    \right)
\sim
\begin{tikzpicture}[scale=.7]
    \node (e1) at (-1,0) {$\scriptstyle 1$};
    \node (e2) at (3,0) {$\scriptstyle j$};
    \node[int] (v1) at (0,0) {};
    \node at (1,.6) {$\cdots$};
    \node at (1,-.6) {$\cdots$};
\node[int] (v2) at (1,0) {};
\node[int] (v3) at (2,0) {};
\node[ext,label=90:{$\scriptstyle 1$}] (w) at (-.5,0) {};
\draw (w) edge (e1) edge (v1) 
(v1)  edge +(0,.3) edge +(0,-.3)
(v2) edge (v1) edge (v3)  edge +(0,.3) edge +(0,-.3)
(v3) edge (e2) edge +(0,.3) edge +(0,-.3);
\end{tikzpicture}
+ 
\begin{tikzpicture}[scale=.7]
    \node (e1) at (-1,0) {$\scriptstyle 1$};
    \node (e2) at (3,0) {$\scriptstyle j$};
    \node[int] (v1) at (0,0) {};
    \node at (1,.6) {$\cdots$};
    \node at (1,-.6) {$\cdots$};
\node[int] (v2) at (1,0) {};
\node[int] (v3) at (2,0) {};
\node[ext,label=90:{$\scriptstyle 1$}] (w) at (.5,0) {};
\draw (w)  edge (v1) edge (v2)
(v1)  edge +(0,.3) edge +(0,-.3) edge (e1)
(v2)  edge (v3)  edge +(0,.3) edge +(0,-.3)
(v3) edge (e2) edge +(0,.3) edge +(0,-.3);
\end{tikzpicture}
+
\begin{tikzpicture}[scale=.7]
    \node (e1) at (-1,0) {$\scriptstyle 1$};
    \node (e2) at (3,0) {$\scriptstyle j$};
    \node[int] (v1) at (0,0) {};
    \node at (1,.6) {$\cdots$};
    \node at (1,-.6) {$\cdots$};
\node[int] (v2) at (1,0) {};
\node[int] (v3) at (2,0) {};
\node[ext,label=90:{$\scriptstyle 1$}] (w) at (1.5,0) {};
\draw (w)  edge (v2) edge (v3) 
(v1)  edge +(0,.3) edge +(0,-.3) edge (e1)
(v2) edge (v1) edge +(0,.3) edge +(0,-.3)
(v3) edge (e2) edge +(0,.3) edge +(0,-.3);
\end{tikzpicture}
+
\begin{tikzpicture}[scale=.7]
    \node (e1) at (-1,0) {$\scriptstyle 1$};
    \node (e2) at (3,0) {$\scriptstyle j$};
    \node[int] (v1) at (0,0) {};
    \node at (1,.6) {$\cdots$};
    \node at (1,-.6) {$\cdots$};
\node[int] (v2) at (1,0) {};
\node[int] (v3) at (2,0) {};
\node[ext,label=90:{$\scriptstyle 1$}] (w) at (2.5,0) {};
\draw (w) edge (e2) edge (v3) 
(v1)  edge +(0,.3) edge +(0,-.3) edge (e1)
(v2) edge (v1) edge (v3)  edge +(0,.3) edge +(0,-.3)
(v3) edge +(0,.3) edge +(0,-.3);
\end{tikzpicture}
\] 
obtained by summing over ways of introducing a bivalent special vertex on an edge along the path from hair $1$ to hair $j$ in the tree. It is hence clear that the map $\gr_{\mF}f$ induces an isomorphism on cohomology as desired.
\end{proof}

\begin{rem}\label{rem:MMPs graded}
We will use below that the above proof in fact shows a slightly stronger statement than claimed in the proposition: $f$ induces a quasi-isomorphism already on the level of the associated graded objects with respect to the bounded below exhaustive filtration by the number of essential vertices.
\end{rem}





\begin{rem}
The zigzag of Proposition \ref{prop:MMPs} is in fact a zigzag of strict quasi-isomorphisms of $\infty$-$\Bar\POp$-comodules for natural $\infty$-$\Bar\POp$-comodule on the objects. 
This will however not be needed for the present paper.
\end{rem}

\section{The adjoint functor $G$}

\subsection{The forgetful functor $F$}
The functor 
\[
F : \dg\cycOp \to \dgptPairOp    
\]
sending an augmented cyclic operad $\POp$ to the pair $(\POp^{nc},\POp^{mod})$ has been introduced in the introduction.
We understand the pair as a pointed object with the morphisms
\[
  \peis\to  (\POp^{nc},\POp^{mod}) \to \peis
\]
defined via the unit and augmentation of $\POp$ respectively.
We also emphasize that the right-module $\POp^{mod}$ has underlying symmetric sequence 
\[
\POp^{mod}(r) = \POp((r)).
\]

\subsection{Construction of the left adjoint $G$}\label{sec:G constr}
We construct the adjoint functor $G$ of Theorem \ref{thm:main dg} explicitly. Let $\peis\to (\QOp, \MOp)\to \peis$ be an object of $\dgptPairOp$. Then we define 
\[
G(\QOp, \MOp) = \Free_{cyc}(\Ind \bar{\QOp}\oplus \MOp)/\sim ,
\]
with the following relations:
\begin{itemize}
    \item For $q_1, q_2\in \bar \QOp$ we again denote by $\tilde q_j$ the corresponding elements in $\Ind \bar{\QOp}$. Then we have:
    \begin{equation}\label{equ:Grel 1}
     \tilde q_1 \circ_{j,0} \tilde q_2 =
      \widetilde{(q_1\circ_j q_2)}.
    \end{equation}
    \item For $q\in \bar \QOp$ and $m\in \MOp$:
    \begin{equation}\label{equ:Grel 2}
    m \circ_{i,0} \tilde q 
    =
    m\circ_i q.    
    \end{equation}
    \item Let $m_1\in \MOp(2)$ be the image of the generator of $\meis$. Then 
    \begin{equation}\label{equ:Grel 3}
    m_1 = 1    
    \end{equation}
    is identified with the operadic unit of $G(\QOp, \MOp)$.
\end{itemize}
It is clear that the ideal in the free cyclic operad generated by the relations is a dg ideal.
We also have that the cyclic operad $G(\QOp, \MOp)$ inherits the augmentation from the pair $(\QOp, \MOp)$.
It is also clear that $G:\dgptPairOp\to \dg\cycOp$ defines a functor.

\begin{lemma}
The functor $G$ is left Quillen adjoint to the functor $F$.
\end{lemma}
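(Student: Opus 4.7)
The plan splits into two independent verifications: the adjunction at the level of hom-sets, and the Quillen condition.

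For the adjunction, I would exploit the fact that $G(\QOp,\MOp)$ is a quotient of a free cyclic operad, so a morphism $G(\QOp,\MOp)\to \POp$ of (augmented) cyclic dg operads is the same data as a morphism of cyclic sequences $\Ind\bar\QOp \oplus \MOp \to \POp$ (where $\POp$ is viewed via its underlying cyclic sequence), compatible with the relations \eqref{equ:Grel 1}--\eqref{equ:Grel 3} and with augmentations. I would then translate each piece of data: the component on $\Ind\bar\QOp$ corresponds, by the $\Ind\dashv\Res$ adjunction of Section \ref{sec:op cycop adj} applied to the underlying sequences, to a symmetric sequence morphism $\bar\QOp\to \bar\POp^{nc}$, and, in the presence of \eqref{equ:Grel 1} together with the augmentation, to an augmented dg operad morphism $\QOp\to \POp^{nc}$. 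The component on $\MOp$ gives a symmetric sequence morphism $\MOp\to \POp^{mod}$, and \eqref{equ:Grel 2} is precisely the compatibility making it a right $\QOp$-module morphism (for the $\QOp$-action on $\POp^{mod}$ pulled back along $\QOp\to \POp^{nc}$). Finally, \eqref{equ:Grel 3} combined with augmentation preservation records exactly the compatibility with both morphisms in the over-under-category structure $\peis\to (\QOp,\MOp)\to \peis$. Naturality is immediate from the construction, yielding the required bijection $\Hom_{\dg\cycOp}(G(\QOp,\MOp),\POp)\cong \Hom_{\dgptPairOp}((\QOp,\MOp),F(\POp))$.

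For the Quillen condition I would verify the easier side: $F$ preserves fibrations and weak equivalences. The key observation is that $F$ does nothing to the underlying chain complexes; it only reorganizes the cyclic data via $\POp^{nc}(r)=\POp((r+1))$ and $\POp^{mod}(r)=\POp((r))$. Hence a quasi-isomorphism $\POp\to \QOp$ of cyclic dg operads induces aritywise quasi-isomorphisms on both $F(\POp)_{op}\to F(\QOp)_{op}$ and $F(\POp)_{mod}\to F(\QOp)_{mod}$, and an aritywise surjection in negative degrees on the $\POp((\cdot))$ stays an aritywise surjection in negative degrees on both reindexings. Thus $F$ sends (trivial) fibrations in $\dg\cycOp$ to (trivial) fibrations in $\dgPairOp$; since $\dgptPairOp$ carries the slice model structure, where fibrations and weak equivalences are created from $\dgPairOp$, the same conclusion holds there. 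Combined with the adjunction, this shows $(G,F)$ is a Quillen adjunction.

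The only real subtlety is bookkeeping around conventions: in the formula for $G(\QOp,\MOp)$ the symmetric sequence $\MOp$ is regarded as a cyclic sequence of the same arity (so an arity-$r$ generator $m$ carries labels $1,\dots,r$), not via the $\Ind\colon \sSeq\to \CSeq$ functor. One must therefore check that the cyclic composition $m\circ_{i,0}\tilde q$ implementing \eqref{equ:Grel 2} matches the ``insert at slot $i$'' operadic right action of $\QOp$ on $\POp^{mod}$ under $F$, i.e.~that the slot labeled $0$ of $\tilde q\in \Ind\bar\QOp$ plays the role of the output of $q\in \bar\QOp$. Once this correspondence is tracked carefully, no technical obstacle arises and the verification becomes routine.
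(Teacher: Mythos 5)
Your proof of the adjunction is essentially identical to the paper's: both read off a morphism $G(\QOp,\MOp)\to\POp$ from its restriction to the generators $\Ind\bar\QOp$ and $\MOp$, use the $\Ind\dashv\Res$ adjunction to convert the first component into a symmetric-sequence map $\bar\QOp\to\Res\bar\POp$, and match the relations \eqref{equ:Grel 1}--\eqref{equ:Grel 3} with, respectively, the operad-morphism, right-module-morphism and pointedness conditions defining a map to $F(\POp)$. The paper stops at the hom-set bijection and leaves the Quillen condition implicit, whereas you spell out that $F$ preserves fibrations and weak equivalences because both are created aritywise in dg vector spaces under the transferred model structures and $F$ merely reindexes the underlying cyclic sequence; this added verification is correct and consistent with the paper's conventions.
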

\begin{proof}
    We verify the adjunction relation explicitly.
    Consider a morphism of augmented cyclic operads
\[
\Phi: G(\QOp, \MOp) \to \POp.   
\]
$\Phi$ is fully determined by its restriction to generators 
\begin{align*}
f: \Ind\bar\QOp &\to \POp 
&
g: \MOp &\to \POp.
\end{align*}
Here $f$ and $g$ are morphisms of cyclic sequences.
Compatibility with the augmentations holds iff $f$ takes values in $\bar \POp$ and $g\mid_{\bar \MOp}$ takes values in $\bar \POp$.

By the induction/restriction adjunction we then have that the data $f$ is equivalent to a morphism of (non-cyclic) symmetric sequences
\[
 \bar f: \bar\QOp \to \Res \bar \POp.
\]
Furthermore, the relations \eqref{equ:Grel 1} are respected (i.e., annihilated) by $\Phi$ iff $\bar f$ is a morphism of non-unital operads.

Similarly, the relations \eqref{equ:Grel 2} are respected by $\Phi$ iff the map $g$ is a morphism of operadic right modules, and relation \eqref{equ:Grel 3} is satisfied iff that map of right modules is a morphism of the under-category under $\meis$.

Summarizing, cyclic operad morphisms $\Phi$ as above are the same data as pairs consisting of a morphism of augmented non-cyclic operads $\QOp\to \POp^{nc}$ and a pointed right module morphism $\MOp\to \POp^{mod}$, as was to be shown.
\end{proof}

\subsubsection{Alternative definition of $G$}\label{sec:G constr alt}
Let us also note that the functor $G$ can be constructed more abstractly as follows.
First, we consider $G$ on free objects 
\[
    \Free_{\pt pair}(\AOp, \BOp)
    =
    (\Free \AOp, \Free_{\ptMod_{\Free A}} \BOp), 
\]
with $\AOp$ and $\BOp$ dg symmetric sequences, and $\Free_{\ptMod_{\Free A}} \BOp$ the free pointed module generated by $\BOp$. This is the free module generated by $\BOp\oplus\meis$.
We may then set 
\[
 G ( \Free_{\pt pair}(\AOp, \BOp)  )
 =
 \Free_{cyc}((\Ind\AOp) \oplus\BOp)
\]
to be the free cyclic operad generated by $\Ind\AOp$ and $\BOp$.
The construction $G(-)$ extends naturally to the full subcategory of free objects in $\dg\pt\PairOp$.


Let $(\QOp, \MOp)$ be a(nother) pair consisting of a non-cyclic operad $\QOp$ and a pointed right $\QOp$-module $\MOp$.
We can always write $(\QOp, \MOp)$ as a coequalizer of free objects
\[
    (\QOp, \MOp)
    \cong
    \coeql \left(\Free_{\pt pair}\overline{\Free_{\pt pair}(\bar \QOp, \bar \MOp)}
    \rightrightarrows
    \Free_{\pt pair}( \bar \QOp,  \bar \MOp) \right).
\]
We then define 
\begin{equation}\label{equ:G def alt}
    \begin{aligned}
    G(\QOp, \MOp) &= 
    \coeql \left( G(\Free_{\pt pair}\overline{\Free_{\pt pair}(\bar \QOp, \bar \MOp)}) 
    \rightrightarrows
    G(\Free_{\pt pair}( \bar \QOp,  \bar \MOp)) \right)
    \\&=
    \coeql \left( \Free_{cyc}(\Ind\overline{\Free_{\Op}\bar \QOp}\oplus \overline{\Free_{\pt\Mod-\Free_{\Op}\bar \QOp} \bar \MOp})
    \rightrightarrows
    \Free_{cyc}( \Ind \bar\QOp\oplus  \bar \MOp) \right)
    .
\end{aligned}
\end{equation}

\subsubsection{Graphical description of $G(\QOp, \MOp)$ }
In particular $G(\QOp, \MOp)$ is a quotient of the free cyclic operad generated by $\Ind \bar \QOp$ and $\MOp$.
Elements of this free cyclic operad can be seen as linear combinations of decorated unrooted trees, with decorations of vertices in $\Ind \bar \QOp$ or $\MOp$.
A decoration by $\Ind \bar \QOp$ can be understood as a $\bar \QOp$-decoration plus a marking of one half-edge incident at the vertex, indicating the "output" of the $\bar \QOp$-decoration.
\[
\begin{tikzpicture}
    \node[int, label=0:{$q$}] (v) at (0,0) {};
    \draw (v) edge +(.5,.5) edge +(0,.5) edge[->-] +(-.5,.5) edge +(.5,-.5) edge +(0,-.5) edge +(-.5,-.5);
\end{tikzpicture}
\quad \quad\quad\quad q\in \QOp(5)
\]

The relations imposed by the coequalizer are the following relations:
\begin{itemize}
\item If decorations on neighboring vertices in a tree can be composed, then we equate the tree with the one obtained by composing the decorations and contracting the connecting edge:
\begin{align*}
    \begin{tikzpicture}
        \node[int, label=0:{$p$}] (v) at (0,.4) {};
        \node[int, label=0:{$q$}] (w) at (0,-.4) {};
        \draw (w) edge[->-] (v) edge +(.5,-.5) edge +(0,-.5) edge +(-.5,-.5)
        (v) edge +(.5,.5) edge +(0,.5) edge[->-] +(-.5,.5);
    \end{tikzpicture}  
    &=
    \begin{tikzpicture}
        \node[int, label=0:{$p\circ q$}] (v) at (0,0) {};
        \draw (v) edge +(.5,.5) edge +(0,.5) edge[->-] +(-.5,.5) edge +(.5,-.5) edge +(0,-.5) edge +(-.5,-.5);
    \end{tikzpicture}
    &
    \begin{tikzpicture}
        \node[ext, label=0:{$m$}] (v) at (0,.4) {};
        \node[int, label=0:{$q$}] (w) at (0,-.4) {};
        \draw (w) edge[->-] (v) edge +(.5,-.5) edge +(0,-.5) edge +(-.5,-.5)
        (v) edge +(.5,.5) edge +(0,.5) edge +(-.5,.5);
    \end{tikzpicture}  
    &=
    \begin{tikzpicture}
        \node[ext, label=0:{$m\circ q$}] (v) at (0,0) {};
        \draw (v) edge +(.5,.5) edge +(0,.5) edge +(-.5,.5) edge +(.5,-.5) edge +(0,-.5) edge +(-.5,-.5);
    \end{tikzpicture}
\end{align*}
Here we marked the $\QOp$-decorated vertices by a black dot and the $\MOp$-decorated vertices by a white dot.
\item The operadic unit in $\QOp$ is identified with the unit in the free cyclic operad
\[
    \begin{tikzpicture}
        \node[int, label=0:{$1$}] (v) at (0,0) {};
        \draw (v) edge[->-] +(0,.5) edge +(0,-.5);
    \end{tikzpicture}   
    =
    \begin{tikzpicture}
        \draw (0,-.4)--(0,.4);
    \end{tikzpicture}  
\]
\item The marked element $1\in \MOp((2))$ is also identified with the operadic unit in the free cyclic operad.
\[
    \begin{tikzpicture}
        \node[ext, label=0:{$1$}] (v) at (0,0) {};
        \draw (v) edge +(0,.5) edge +(0,-.5);
    \end{tikzpicture}   
    =
    \begin{tikzpicture}
        \draw (0,-.4)--(0,.4);
    \end{tikzpicture}  
\]
\end{itemize}

\subsection{$G$ of a cobar construction}
Our next goal is to understand the value of the adjoint functor $G$ 
\[
G(\Bar^c(\COp, \MOp))    
\]
on the cobar construction 
\[
    \Bar^c(\COp, \MOp)
    =
    (\Bar^c\COp, \Bar^c_{\COp}\MOp)   
\]
of a pair $(\COp, \MOp)$ consisting of a 1-shifted cooperad $\COp$ and a pointed $\COp$-comodule $\MOp$.
First, by quasi-freeness of the cobar construction we have that 
\[
G(\Bar^c(\COp, \MOp))  
\cong 
(\Free_{cyc}(\Ind\bar \COp \oplus \bar \MOp), D) 
\]
is a quasi-free cyclic operad generated by $\Ind\bar \COp$ and $\bar \MOp$, with differential yet to be determined.
We then have:
\begin{lemma}
There is a 1-shifted non-unital cyclic dg cooperad structure on the cyclic sequence $\Ind\bar \COp \oplus \bar \MOp$ such that $G(\Bar^c(\COp, \MOp))$ is identified with the cobar construction of that 1-shifted non-unital cyclic cooperad,
\[
    G(\Bar^c(\COp, \MOp)) \cong \Bar^c(\Ind\bar \COp \oplus \bar \MOp).  
\]
\end{lemma}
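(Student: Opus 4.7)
The plan is to define the 1-shifted non-unital cyclic dg cooperad structure on $\bD := \Ind\bar\COp \oplus \bar\MOp$ explicitly, and then identify the cobar construction $\Bar^c(\bD)$ with $G(\Bar^c(\COp,\MOp))$ as quasi-free cyclic operads.

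For the cyclic cooperad structure on $\bD$, the (partial) cocomposition $\Delta$ has the following components. On the summand $\Ind\bar\COp$, the cocomposition $\Ind\bar\COp \to \Ind\bar\COp \otimes \Ind\bar\COp$ is the one making $\Ind_1(\COp)$ into a cyclic 1-shifted cooperad, as discussed in Section~\ref{sec:op cycop adj}, obtained aritywise from the cocomposition of $\COp$ via the induction functor. On the summand $\bar\MOp$, the right $\COp$-comodule coaction $\bar\MOp \to \bar\MOp \otimes \bar\COp$ composed with the canonical map $\bar\COp \to \Ind\bar\COp$ yields a component $\bar\MOp \to \bar\MOp \otimes \Ind\bar\COp$, and the cyclic symmetry of the cocomposition in a cyclic cooperad then forces a corresponding component $\bar\MOp \to \Ind\bar\COp \otimes \bar\MOp$. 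No $\bar\MOp \otimes \bar\MOp$ or $\Ind\bar\COp \otimes \Ind\bar\COp$ summand appears in $\Delta(\bar\MOp)$, reflecting the fact that the $\COp$-coaction on $\MOp$ takes values only in $\MOp \otimes \COp$. Coassociativity of $\Delta$ on $\Ind\bar\COp$ follows from that of $\COp$ and the functoriality of $\Ind_1$; on $\bar\MOp$ it follows from the right $\COp$-comodule axioms for $\MOp$.

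Next I would appeal to the alternative description of $G$ via coequalizers of Section~\ref{sec:G constr alt} to verify that, as a graded cyclic operad, $G(\Bar^c(\COp,\MOp)) \cong \Free_{cyc}(\Ind\bar\COp \oplus \bar\MOp)$: the relations \eqref{equ:Grel 1} and \eqref{equ:Grel 2} identify any $\Bar^c\COp$-tree of $\bar\COp$-decorations, respectively any $\Bar^c_\COp\MOp$-tree of decorations, with the corresponding cyclic operadic composition of individual generators in $\Ind\bar\COp$ and $\bar\MOp$, whereas \eqref{equ:Grel 3} only affects the pointed structure. This matches the underlying graded object of $\Bar^c(\bD)$ tautologically. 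It then remains to match the differentials. The cobar differential on $\Bar^c(\COp,\MOp)$ splits a $\bar\COp$-vertex in a decorated tree into two $\bar\COp$-vertices via the cocomposition of $\COp$, and splits a $\bar\MOp$-vertex into a $\bar\MOp$-vertex plus a $\bar\COp$-vertex via the comodule coaction. After applying $G$ and translating through the relations, these splits correspond precisely to the cocompositions in the cyclic cooperad $\bD$ defined above, identifying the two differentials.

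The main obstacle will be verifying the cyclic combinatorics carefully. In particular, the component $\bar\MOp \to \Ind\bar\COp \otimes \bar\MOp$ must be extracted from $\bar\MOp \to \bar\MOp \otimes \Ind\bar\COp$ using the $\bbS_r$-actions on the cyclic sequences, and the sign conventions arising from the edge-wise shift factors $\Q[1]$ in \eqref{equ:bar def} have to be tracked consistently. Coassociativity of the mixed cocompositions, while following in principle from the right comodule axioms for $\MOp$, likewise requires careful bookkeeping of arities and of the action of the induction functor $\Ind$.
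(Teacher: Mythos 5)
Your overall strategy --- identify the underlying graded object of $G(\Bar^c(\COp,\MOp))$ with the free cyclic operad on $\Ind\bar\COp\oplus\bar\MOp$ via the coequalizer description, then match differentials --- is the same as the paper's, but there is a genuine gap in the differential-matching step, precisely at the point where you dismiss relation \eqref{equ:Grel 3} as ``only affecting the pointed structure.'' The comodule cobar construction $\Bar^c_{\COp}\MOp$ is generated by all of $\MOp$, not by $\bar\MOp$, and the reduced coaction on $m\in\bar\MOp$ has the form $\sum m'\otimes c''$ where the factor $m'$ may land on the distinguished element $m_1\in\MOp((2))$ (the image of the generator of $\meis$). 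Relation \eqref{equ:Grel 3} identifies $m_1$ with the operadic unit of $G(\Bar^c(\COp,\MOp))$, so these terms of the cobar differential do \emph{not} become binary cocomposition terms of the putative cooperad structure: they collapse to a degree~$+1$ \emph{linear} map $d'\colon \bar\MOp\to\bar\COp\subset\Ind\bar\COp$, $m\mapsto\sum\epsilon(m')\,c''$, which is a cross-term of the internal differential mixing the two summands of $\Ind\bar\COp\oplus\bar\MOp$. Your proposed dg structure, in which the differential on the generators is just the direct sum of the internal differentials and the cocomposition is the coaction projected to $\bar\MOp\otimes\Ind\bar\COp$ (plus its cyclic partner), omits $d'$ entirely; with that structure $\Bar^c(\Ind\bar\COp\oplus\bar\MOp)$ would not agree with $G(\Bar^c(\COp,\MOp))$ as a complex, and indeed the square of your proposed codifferential would in general fail to vanish, since $D^2=0$ on the cobar side uses the interplay between $d'$ and the reduced coaction.

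A secondary, non-fatal difference: you build the cocomposition by hand and then must verify coassociativity and the cyclic equivariance of the mixed components, which you flag as delicate. The paper sidesteps all of this by observing that $G(\Bar^c(\COp,\MOp))$ is quasi-free with a square-zero degree~$+1$ derivation $D$, and that such a derivation on $\Free_{cyc}(\Ind\bar\COp\oplus\bar\MOp)$ \emph{is by definition} a $1$-shifted cyclic $\infty$-cooperad structure on the generators; one then only has to note that $D$ produces at most two factors from each generator, so there are no cooperations of arity $\geq 3$. Adopting that tautological reading would both eliminate your coassociativity bookkeeping and force you to confront the $d'$ term, since it appears unavoidably when you read off the arity-one part of $D$ on the generators $\bar\MOp$.
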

In fact, this Lemma is almost a tautology: A1-shifted cyclic $\infty$-cooperad structure on is determined by a codifferential on $\Ind\bar \COp \oplus \bar \MOp$ is by definition a degree +1 square zero derivation on the augmented cyclic operad $\Free_{cyc}(\Ind\bar \COp \oplus \bar \MOp)$, and we are given one such derivation in the form of the differential $D$. So one just has to check that the $\infty$-cooperad structure defined by $D$ has no operations of arity $\geq 3$. But this is also clear since the differential $D$ is induced solely by the differentials on $\Bar^c(\COp, \MOp)$, which have no terms that would make a generator into more than two factors.

Nevertheless, let us make the dg cooperad structure defined by $D$ on $\Ind\bar \COp \oplus \bar \MOp$ explicit. The differential on the generators $\Ind\bar \COp$ is defined (via monoidal functoriality of $\Ind$) by the differential $D'$ on the generators $\bar \COp$ of the cobar construction $\Bar^c\COp$.
Concretely, for $c\in \bar\COp$ this is given by 
\[
D' c = d_{\bar\COp} + \sum c' \circ c''    
\]
where the first term is the internal differential on $\bar\COp$, and the second is Sweedler notation for the reduced cocomposition, followed by composition in $\Bar^c\COp$.
The analogous formula also holds in $G(\Bar^c(\COp, \MOp))$, one just applies $\Ind$.
It follows that our desired non-unital 1-shifted dg cyclic cooperad structure on $\Ind\bar \COp \oplus \bar \MOp$ is such that $\Ind\bar \COp$ is a sub-cooperad, with the cooperad structure the same as defined via the $(\Ind,\Res)$-adjunction between cooperads and cyclic cooperads, see section \ref{sec:op cycop adj}.

We next turn to the generators $\bar \MOp$.
Again the differential $D$ on $\bar \MOp$ is determined by the differential $D'$ on the comodule cobar construction $\Bar^c_{\COp}\MOp$. For $m\in \bar \MOp$ we have
\[
D'm = d_{\MOp} m + \sum m'\circ c'', 
\]
with the first term the internal differential on $\MOp$ and the second being Sweedler notation for the $\bar \COp$-coaction, followed by the composition in $\Bar^c_{\COp}\MOp$.
Note however that $\Bar^c_{\COp}\MOp$ is generated by $\MOp$, not $\bar \MOp$, and the second term above may have terms $m'\notin \bar \MOp$.
Separating these terms we may write 
\[
    D'm = d_{\MOp} m + 
    \sum \epsilon(m') m_1 \circ c''
    +
    \sum \pi m' \circ c'',   
\]
with $m_1\in \MOp(2)$ the image of the generator under $\meis\to \MOp$, $\epsilon:\MOp\to \meis$, and $\pi m' = m'-m_1\epsilon(m')$ the projection to $\bar \MOp$.
Note that by definition of $G$ in the previous subsection the element $m_1$ is identified with the unit in the cyclic operad $G(\Bar^c(\COp, \MOp))$.
Hence the differential $D$ there is given on $m\in \bar \MOp$ by 
\[
D m = d_{\MOp} m + d'm + \sum \pi m' \circ c''
\]
with $d':\MOp \to \bar \COp$ a new part of the differential induced by the map $m\mapsto \sum \epsilon(m') c''$.

\subsection{Proof of Theorem \ref{thm:main dg}}

Let $\POp$ be an augmented cyclic operad.
It is is automatically fibrant since all objects in $\dg\cycOp$ are fibrant. 
The goal is to show that the derived counit of the adjunction 
\[
LG (F(\POp))  \to \POp 
\]
is a quasi-isomorphism.
To this end we choose as a cofibrant replacement of $F(\POp)=(\POp^{nc}, \POp^{mod})$ in $\dgptPairOp$ the bar-cobar resolution.
We hence have to check that the natural morphism 
\[
\eta \colon G\Bar^c\Bar F (\POp) \to \POp
\]
induced by the adjunction counits $\Bar^c\Bar F(\POp)\to F(\POp)$ and $G\circ F \Rightarrow \mathit{id}$
is a quasi-isomorphism of cyclic operads.
Let us first make the morphism $\eta$ more explicit.
By the discussion of the previous subsection we have 
\[
G\Bar^c\Bar F (\POp)
=
\Bar_{cyc}^c( \Ind\overline{\Bar\POp^{nc}} \oplus 
\overline{\Bar \POp^{mod}} ).
\]  
The morphism $\eta$ is fully determined by its value on generators. Tracing the definition of the counits, the restriction of $\eta$ to the generators $\Ind \overline{\Bar\POp^{nc}}$ is given by the map 
\[
    \Ind\overline{\Bar\POp^{nc}}
    \to 
    \Ind \overline{\POp^{nc}} \to \bar \POp
\]
that is the composition of the projection to the cogenerators of the (non-cyclic operadic) bar construction followed by the counit of the $(\Ind, \Res)$-adjunction of section \ref{sec:symseq}, i.e., the forgetful map.
The restriction of $\eta$ to the generators $\overline{\Bar \POp^{mod}}$ is similarly given by the projection 
\[
    \overline{\Bar \POp^{mod}} \to \bar\POp    
\]
to the cogenerators of the module cobar construction.
We next claim that one can factorize $\eta$ as a composition passing through the cyclic operadic bar-cobar resolution of $\POp$
\[
\Bar^c\Bar F (\POp)
=
\Bar_{cyc}^c( \Ind\overline{\Bar\POp^{nc}} \oplus 
\overline{\Bar \POp^{mod}} )
\xrightarrow{\Bar_{cyc}^c\phi}
\Bar_{cyc}^c(\Bar_{cyc}\POp) \xrightarrow{\eta_{cyc}} \POp,
\]
with the right-hand morphism $\eta_{cyc}$ the counit of the cyclic operad bar-cobar adjunction and
\[
\phi :  (\Ind\overline{\Bar\POp^{nc}} \oplus 
\overline{\Bar \POp^{mod}}, D)
\to 
\overline{\Bar_{cyc}\POp}  
\]
the following morphism of non-unital 1-shifted cyclic cooperads:
Since $\overline{\Bar_{cyc}\POp}$ is quasi-cofree the morphism $\phi$ is uniquely determined by its composition $\pi\circ \phi$ with the projection $\pi: \overline{\Bar_{cyc}\POp} \to \bar \POp$ to cogenerators.
This composition $\pi\circ \phi$ on $\Ind\overline{\Bar\POp^{nc}}$ is the composition of the projection to cogenerators with the counit under the $(\Ind, \Res)$-adjunction 
\[
    \Ind\overline{\Bar\POp^{nc}}
    \to 
    \Ind\bar\POp^{nc}
    \to 
    \bar\POp.
\]
On the summand $\overline{\Bar \POp^{mod}}$ the morphism $\pi\circ \phi$ is the composition of the projection to cogenerators of the module bar construction with the projection to the augmentation ideal
\[
    \overline{\Bar \POp^{mod}}
\to \POp
\xrightarrow{\pi}
\bar \POp.
\]
We leave it to the reader to check that $\phi$ indeed respects the differentials, and is hence a morphism of non-unital 1-shifted cyclic dg cooperads.
It is clear by construction that $\eta=(\Bar_{cyc}^c\phi)\circ \eta_{cyc}$.
Furthermore, $\eta_{cyc}$ is a quasi-isomorphism, so we are left with checking that $\Bar_{cyc}^c\phi$ is a quasi-isomorphism as well.
To do this we will check that $\phi$ is a quasi-isomorphism on the associated graded level and then invoke Lemma \ref{lem:cyclic cobar} saying that the cobar construction sends such quasi-isomorphisms to quasi-isomorphisms.

Recall also Proposition \ref{prop:MMPs}. We shall construct a commutative diagram of dg cyclic sequences 
\begin{equation}\label{equ:main proof diag}
\begin{tikzcd}
    (\Ind\overline{\Bar\POp^{nc}} \oplus 
    \overline{\Bar \POp^{mod}}, D)
    \ar{rr}{\phi}
    \ar{dr}{l}
    &
    &
    \overline{\Bar \POp} 
     \\
&
(\Ind\overline{\Bar\POp^{nc}} \oplus \MP\otimes \overline{\Bar \POp}, D)
=
(\Ind\overline{\Bar\POp} \oplus \Ind\overline{\Bar\POp}[1]\oplus \overline{\Bar\POp}
, D)
\ar{ur}{r}
&
\end{tikzcd}.
\end{equation}
We explain the second line and the arrows $
r,l$.
The construction $\MP\otimes \overline{\Bar \POp}$ is as in Proposition \ref{prop:MMPs}.
The dg cyclic sequence in the second line of \eqref{equ:main proof diag} is the mapping cone of the projection $\MP\otimes \overline{\Bar \POp}\to \MP'\otimes \overline{\Bar \POp} \cong\Ind\overline{\Bar\POp}[1]$, see section \ref{sec:auxseq} for the notation.
The arrow $l$ is just the prolongation of the morphism of Proposition \ref{prop:MMPs} by the identity map on $\Ind\overline{\Bar\POp}$.

The arrow $r$ is the following map:
The summand $\Ind\overline{\Bar\POp}[1]$ is sent to zero by $r$, on the summand $\overline{\Bar\POp}$ the map $r$ is the identity and on $\Ind\overline{\Bar\POp}$ the map $r$ is defined via the counit of the $(\Ind, \Res)$-adjunction. It is clear that $\phi=r\circ l$.
Note that by a simple spectral sequence argument
one has that the map $r$ is a quasi-isomorphism. 
Furthermore, $l$ is also a quasi-isomorphism, using Proposition \ref{prop:MMPs}.
Both these statements hold also on the level of the associated graded complexes with respect to the filtration by number of essential vertices, see Remark \ref{rem:MMPs graded}.
Hence we conclude that $\gr_{\mF}\phi$ is also a quasi-isomorphism.
Hence by Lemma \ref{lem:cyclic cobar} we conclude that $\Bar^c\phi$ is a quasi-isomorphism as desired.
\hfill\qed

\subsection{Version for dg cooperads}
The construction of the right adjoint functor $G$ in the dg cooperad setting is dual but otherwise identical to the construction of the right adjoint $G$ in the dg operad setting in section \ref{sec:G constr}. 
The existence of the right-adjoint $G$ to $F$ follows from standard adjoint functor theorems: The categories are locally presentably, and the forgetful functor $F$ preserves colimits, which are created in dg symmetric sequences for both categories involved,
We just note that there is a formula for $G$ analogous (dual) to \eqref{equ:G def alt}. 
Let $\peis^*\to (\COp,\MOp)\to \peis^*$ be an object of $\dgptPairOp^c$, consisting of a cougmented conilpotent dg cooperad $\COp$, and a $\COp$-comodule $\MOp$. We may then write $(\COp,\MOp)$ as an equalizer of cofree objects
\[
    (\COp, \MOp)
    \cong
    \eql \left(
        \Free_{\pt pair}^c( \bar \COp,  \bar \MOp)
        \rightrightarrows
        \Free_{\pt pair}^c\overline{\Free_{\pt pair}^c(\bar \COp, \bar \MOp)}
     \right).
\]
On cofree objects $\Free_{\pt pair}^c(\AOp,  \BOp)$ one quickly sees from the adjunction relation that 
\[
    G(\Free_{\pt pair}^c(\AOp,  \BOp)) = 
    \Free_{cyc}^c(\Ind\AOp\oplus\BOp).
\]
Then, since $G$ preserves colimits as a right adjoint $G(\COp, \MOp)$ is identified with the equalizer 
 \begin{equation}\label{equ:G def alt dual}
    G(\QOp, \MOp) 
    =
    \eql \left(
        \Free_{cyc}^c( \Ind \bar\COp\oplus  \bar \MOp)
        \rightrightarrows
        \Free_{cyc}^c\left(\Ind\overline{\Free_{\Op}^c\bar \COp}\oplus \overline{\Free_{\pt\Mod^c-\Free_{\Op}^c\bar \COp}^c \bar \MOp}\right)
     \right)
    .
\end{equation}

Furthermore, by dualizing the proof of the preceding subsection we obtain the following dg cooperad version of Theorem \ref{thm:main dg}.

\begin{thm}\label{thm:main dgc}
    The forgetful functor $F$ is part of a Quillen adjunction 
    \[
    F \colon \dg\cycOp^c  \rightleftarrows  \dg\pt\PairOp^c  \colon G.   
    \]
    For $\COp$ any coaugmented cyclic cooperad the derived unit of the adjunction 
    \[
    \COp \to RG( LF (\COp))    
    \] 
    is a weak equivalence.
\end{thm}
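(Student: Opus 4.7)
The plan is to mirror the proof of Theorem \ref{thm:main dg} by systematically reversing arrows. First I would establish the Quillen adjunction $F \dashv G$. The existence of the right adjoint $G$ follows from the adjoint functor theorem, since both categories are locally presentable and $F$ preserves colimits (they are created in dg symmetric sequences), or explicitly from the equalizer formula \eqref{equ:G def alt dual}. To check that $F$ is left Quillen, recall that weak equivalences and cofibrations in both $\dg\cycOp^c$ and $\dg\pt\PairOp^c$ are detected on underlying (cyclic or symmetric) sequences via the left-transfer construction. Since the restriction $\Res$ preserves both quasi-isomorphisms and injections, $F$ preserves cofibrations and trivial cofibrations.

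For the weak equivalence claim, I may assume $\COp$ is already cofibrant, since the derived unit is invariant under weak equivalence. As a functorial fibrant replacement of $F(\COp)=(\COp^{nc},\COp^{mod})$ in $\dg\pt\PairOp^c$ I would use the cobar--bar resolution $\Bar\Bar^c F(\COp)$, whose two slots are $\Bar\Bar^c\COp^{nc}$ on the cooperadic side and the corresponding comodule cobar--bar resolution of $\COp^{mod}$ over $\Bar^c\COp^{nc}$. Dualizing the analysis of section \ref{sec:G constr} then identifies
\[
G\bigl(\Bar\Bar^c F(\COp)\bigr) \;\cong\; \Bar_{cyc}\!\left(\Ind\,\overline{\Bar^c\COp^{nc}} \oplus \overline{\Bar^c_{\COp^{nc}}\COp^{mod}}\right),
\]
the cyclic bar construction of a cyclic dg operad built from the operadic cobar construction of $\COp^{nc}$ and the induced-module cobar construction of $\COp^{mod}$.

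The main step is then to factor the derived unit through the cyclic cobar--bar resolution of $\COp$:
\[
\COp \xrightarrow{\eta_{cyc}} \Bar_{cyc}\Bar^c_{cyc}\COp \xrightarrow{\Bar_{cyc}\psi} \Bar_{cyc}\!\left(\Ind\,\overline{\Bar^c\COp^{nc}} \oplus \overline{\Bar^c_{\COp^{nc}}\COp^{mod}}\right),
\]
with $\eta_{cyc}$ the unit of the cyclic cobar--bar adjunction (a quasi-isomorphism by the standard result) and $\psi$ the morphism of cyclic dg operads dual to the map $\phi$ appearing in \eqref{equ:main proof diag}. By the dual of Lemma \ref{lem:cyclic cobar}, it suffices to check that $\psi$ is a quasi-isomorphism on the associated graded with respect to the filtration by number of essential vertices; this is furnished by the dual of Proposition \ref{prop:MMPs} together with Remark \ref{rem:MMPs graded}, fitting into the analogue of diagram \eqref{equ:main proof diag} read in the cooperadic direction.

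The main obstacle will be formulating and verifying the duals of Lemma \ref{lem:cyclic cobar} and Proposition \ref{prop:MMPs}. The spectral sequence argument for Lemma \ref{lem:cyclic cobar} relied on a descending filtration by number of tree vertices, which is automatically bounded on each fixed arity. In the dual statement one needs an ascending filtration on the cyclic bar construction, and convergence must be ensured by checking that only finitely many tree shapes contribute per fixed arity and outer filtration level. Once these dual statements are established, the remainder of the argument transposes verbatim from the proof of Theorem \ref{thm:main dg}.
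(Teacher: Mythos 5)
Your proposal is correct and takes essentially the same route as the paper, which establishes the right adjoint $G$ via the adjoint functor theorem together with the equalizer formula on cofree objects and then obtains the result by dualizing the proof of Theorem \ref{thm:main dg}; your outline simply makes that dualization explicit (bar-of-cobar as fibrant replacement, factorization of the unit through the cyclic bar--cobar resolution, and the duals of Lemma \ref{lem:cyclic cobar} and Proposition \ref{prop:MMPs}). The convergence issue you flag for the dual filtration argument is real but is handled exactly as in the operadic case, since each essential vertex carries positive filtration weight, bounding the number of contributing trees in each fixed filtration degree.
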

\hfill\qed

\subsection{Hopf cooperads and proof of Theorem \ref{thm:main Hopf}}
The construction of the adjoint $G$ in the dg cooperad setting of the previous section extends to the Hopf cooperadic setting.
The existence of the left-adjoint follows again from adjoint functor Theorems: The categories are locally presentable, and the functor $F$ preserves colimits, which are created arity-wise in dg commutative algebras.
Then, on cofree dg Hopf pairs $(\COp, \MOp)=\Free^c_{\pt\PairOp^c}(\AOp, \BOp)$ cogenerated by symmetric sequences in dg commutative algebras $\AOp$, $\BOp$, one sees from the adjunction relation that $G$ is the cofree cyclic cooperad cogenerated by $\AOp$ and $\BOp$,
\[
G(\Free^c(\AOp, \BOp)) = \Free_{cyc}^c(\Ind^c\AOp\oplus \BOp). 
\]
Note that the free cyclic operad inherits a natural Hopf structure, i.e., arity-wise commutative products, by tree-wise multiplication of elements in $\AOp$ and $\BOp$.
Finally, by expressing any pointed dg Hopf pair as an equalizer of cofree objects as before, we see that the equalizer \eqref{equ:G def alt dual} also defines the right adjoint $G$ on dg Hopf pairs.
But note that limits in the category $\cycHOpc$ are creted in $\dg\cycOp^c$, and hence the functor $G(\COp, \MOp)$ for Hopf pairs is the same dg cyclic cooperad as the one computed for the underlying dg pairs.

Hence the quasi-isomorphism statement in Theorem \ref{thm:main Hopf} readily follows from its non-Hopf version, namely Theorem \ref{thm:main dgc}.
More precisely, for $\COp$ a cofibrant cyclic dg Hopf cooperad, let $X$ be a fibrant replacement in $\pt\PairHOpc$ of the pair $F(\COp)=(\COp^{nc},\COp^{mod})$. Then $X$ is also fibrant in $\dg\PairOp^c$ by the Quillen adjunction \eqref{equ:hopf forget adj}. 
Hence the morphism $\COp\to G(X)$ is also the derived counit in the category $\cycHOpc$, and hence a weak equivalence by Theorem \ref{thm:main dgc}.
We also note that is is actually not required that $\COp$ is cofibrant, since $F$ preserves weak equivalences.
\hfill\qed

\section{Application: Homotopy automorphisms of $\BV^c$}

\subsection{Parenthesized ribbon chord diagrams}
We recall here known facts about chord diagrams.
The framed Drinfeld-Kohno operad in Lie algebras $\frt$ is defined such that
$\frt(r)$ is generated by symbols $t_{ij}$, $1\leq i,j\leq r$, with relations 
\begin{equation}\label{equ:drinfeldkohno_rel}
	\begin{aligned}
	t_{ij} &= t_{ji} \\
	[t_{ij}, t_{kl}] &= 0 \quad \text{for $\{i,j\}\cap\{k,l\}=\emptyset$ } \\
	[t_{ij},t_{ki}+t_{kj}] &= 0\, .
	\end{aligned}
\end{equation}
The Lie algebras $\frt(r)$ assemble into an operad in Lie algebras, see \cite[1.3]{Severa2010} for explicit formulas for the composition morphisms (with notation $s_j=\frac 1 2 t_{jj}$). The operad in Lie algebras $\frt$ also has a cyclic structure. Concretely, the cyclic structure map corresponding to the transposition $0\leftrightarrow 1$ is given by
\begin{align*}
t_{ij} & \mapsto t_{ij}  &
t_{1i} & \mapsto -\sum_{k=1}^r t_{ki} &
t_{11} & \mapsto \sum_{k,l=1}^r t_{kl}
\end{align*}
for $i,j\geq 2$.

Alternatively, $\frt(r)$ also has a manifestly cyclic presentation.
We add the generators $t_{00}$ and $t_{0i}=t_{i0}$, $i=1,\dots,r$. Then the relations can be re-written in a cyclically invariant form as 
\begin{align*}
    t_{ij} &= t_{ji} \\
	[t_{ij}, t_{kl}] &= 0 \quad \text{for $\{i,j\}\cap\{k,l\}=\emptyset$ } \\
	\sum_{i=0}^r t_{ij} &= 0\, .
\end{align*}
Yet alternatively, we may use the last relations to eliminate the generators $t_{ii}$ for $i=0,\dots, r$.

The operad in Lie algebras $\frt$ has a natural grading by weight, where we assign weight $1$ to all generators $t_{ij}$.\footnote{In fact, the usual convention is to assign weight 2 to all generators, bt this normalization is not important here.} 
We will also use the weight graded dual cyclic cooperad in coalgebras $\frtc$, and its Chevalley-Eilenberg complex
\[
C(\frtc) := (S(\frtc[-1]), d_{CE}).
\]
One has a quasi-isomorphism of dg Hopf cooperads
\[
    C(\frtc) \xrightarrow{\sim} \BV^c.
\]
It is defined on the commutative algebra generators by the composition 
\[
    \frtc[-1] \to \Q( t_{ij}^*) \xrightarrow{t_{ij}^*\mapsto \omega_{ij}}  \BV^c,
\]
where $t_{ij}^*$ is the cogenerator of $\frtc$ dual to $t_{ij}$ and $\omega_{ij}$ are the standard commutative algebra generators of $\BV^c$. 
 
Furthermore, we can build another cyclic operad from $\frt$.
Let $\PaP$ be the operad in sets of parenthesized permutations. Elements of $\PaP(r)$ are rooted planar binary trees with set of leaves $1,\dots,r$. Equivalently, such trees can be written as a parenthesized permutation such as 
\[
1(4((35)6)).    
\]
$\PaP$ naturally forms an operad in sets, the composition given by grafting trees. It is furthermore a cyclic operad: Any planar rooted binary with $r$ leaves can be seen as a non-rooted binary tree with $r+1$ leaves, and the permutation group $S_{r+1}$ acts by permuting the labels on the $r+1$ leaves. 

We also consider the pair groupoid $\tPaP$, which is the operad in groupoids with the objects of $\tPaP(r)$ being $\PaP(r)$, and exactly one morphism between any pair of objects.
$\tPaP$ is a cyclic operad in groupoids.

Next, we consider the complete universal enveloping algebra operad
\[
 U(\frt),
\]
which is a cyclic operad in complete Hopf algebras.
The operad of parenthesized ribbon chord diagrams $\PaRCD$ is the the cyclic operad in categories enriched in complete Hopf algebras given by the product of $\tPaP$ with $U(\ft)$.
Concretely, the objects of $\PaRCD(r)$ are the parenthesized permutations $\PaP(r)$, and the morphisms between any pair of objects are $U(\frt)$.

It can be seen that $\PaRCD$ is generated by the following three operations:
\begin{align*}
X =*\otimes 1&\in \PaRCD(2)((12), (21) ) 
&
t_{12}=*\otimes t_{12}&\in \PaRCD((2))
&
A=*\otimes 1 &\in \PaRCD(3)((12)3, 1(23)).
\end{align*}

We also define the (non-cyclic) suboperad $\ft\subset \frt$ by simply dropping the generators $t_{ii}$. Similarly, one my also define the operad $\PaCD$ analogously to $\PaRCD$, just replacing $\frt$ by $\ft$. $\PaCD$ is a model for the non-framed little disks operad.

\subsection{Grothendieck-Teichmüller group and action on chord diagrams}

\begin{defn}
The Grothendieck-Teichmüller group $\GRT = \Q^\times \ltimes \GRT_1$ consists of pairs $(\lambda, \Phi)$ with $\lambda\in \Q^\times$ and $\Phi\in \GRT_1$ a
group-like element of the complete free associative algebra in two variables $\Q\langle\langle x,y\rangle \rangle$ that satisfies the following relations.
\begin{align*}
\Phi(y,x)&= \Phi(x,y)^{-1} \\
\Phi(x,y)\Phi(y,-x-y)\Phi(-x,-y,x) &=1 \\
\Phi(t_{12}, t_{23}+t_{24})
\Phi(t_{13}+t_{23}, t_{34})&=
\Phi(t_{23}, t_{34})
\Phi(t_{12}+t_{13}, t_{24}+t_{34})
\Phi(t_{12}, t_{23}) \quad \in U\ft(4).
\end{align*}
The multiplication on $\GRT_1$ is defined such that $(\Phi\cdot\Phi')(x,y)=\Phi(x,y)\Phi'(X,\Phi^{-1}(x,y)y\Phi(x,y))$, and the action of $\Q^\times$ on $\GRT_1$ is such that $\lambda\cdot \Phi(x,y)=\Phi(\lambda x, \lambda y)$.
\end{defn}
\begin{prop}\label{prop:GRT action}
The Grothendieck-Teichmüller group $\GRT$ acts continuously on the cyclic operad in complete Hopf algebras $\PaRCD$ such that an element $(\lambda, \Phi)$ acts on the generators as follows:
\begin{align*}
X &\mapsto X \\
t_{12} &\mapsto \lambda t_{12} \\
A&\mapsto \Phi(t_{12}, t_{23})
\end{align*}
\end{prop}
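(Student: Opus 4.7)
The plan is to reduce the cyclic-framed case to the classical Drinfeld--Bar-Natan theorem about the non-framed operad $\PaCD$, and then check compatibility with the framing and cyclic structures, which are both essentially forced by naturality.

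First, I would invoke the classical result that $\GRT$ acts on the non-cyclic non-framed operad $\PaCD$ via the very formulas $X \mapsto X$, $t_{12} \mapsto \lambda t_{12}$, $A \mapsto \Phi(t_{12}, t_{23})$, which is the Drinfeld--Bar-Natan action. Recast operad-theoretically, this gives an automorphism of the operad in Lie algebras $\ft$ which is uniform scaling $t_{ij} \mapsto \lambda t_{ij}$ for $i\neq j$, together with a prescription for how the associator $A$ is modified.

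Second, I would extend the action from $\ft$ to $\frt$. Using the manifestly cyclic presentation with relation $\sum_{i=0}^{r} t_{ij} = 0$, the framing generators $t_{jj}$ are determined by the off-diagonal ones via $t_{jj} = -\sum_{i\neq j} t_{ij}$, so the extension is forced to be $t_{jj} \mapsto \lambda t_{jj}$, and globally the action is uniform scaling $t_{ij} \mapsto \lambda t_{ij}$ on all generators. I would then check that this uniform scaling preserves the defining relations of $\frt$: the symmetry relation $t_{ij} = t_{ji}$ is trivially invariant, the infinitesimal braid relation $[t_{ij}, t_{kl}] = 0$ (disjoint indices) is quadratic and hence invariant, and the sum relation is linear and hence invariant. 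In particular, the explicit operadic composition formulas of \v{S}evera are preserved.

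Third, I would verify compatibility with the cyclic structure on $\frt$. The cyclic permutation $0 \leftrightarrow 1$ acts by integer-coefficient formulas $t_{1i} \mapsto -\sum_k t_{ki}$, $t_{11} \mapsto \sum_{k,l} t_{kl}$, etc., which manifestly commute with uniform scaling by $\lambda$. Combined with the classical verification that the Drinfeld--Bar-Natan action respects the operadic composition of $\PaCD$, this yields a well-defined $\GRT$-action on the cyclic operad $U(\frt)$ part of $\PaRCD = \tPaP \times U(\frt)$. The action on $\tPaP$ is trivial ($X \mapsto X$ and compatibility on the groupoid part is automatic since $\tPaP$ is a pair groupoid with a unique morphism between any two objects), so the action extends to all of $\PaRCD$.

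The main obstacle is the compatibility of the action on the associator $A$ with the cyclic permutations in $\PaRCD((3))$: one must check that the cyclic transforms of $A \mapsto A\cdot\Phi(t_{12},t_{23})$ yield consistent assignments for the cyclic rotates of $A$, which is exactly the content of the hexagon relations in the defining axioms of $\GRT$ (combined with the pentagon relation for iterated associativities). Continuity of the action with respect to the weight filtration on $U(\frt)$ is then automatic, since the action on generators is polynomial in the weight-graded generators and scaling preserves the filtration.
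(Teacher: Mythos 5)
Your overall architecture matches the paper's: cite Bar-Natan for the well-definedness of the action on the non-cyclic operad $\PaRCD$, observe that cyclic compatibility only needs to be checked on the three generators, and note that $X$ and $t_{12}$ are unproblematic, leaving the associator $A$ as the only real issue. However, at precisely that point — the one computation that constitutes the actual content of the proposition — you stop and assert that the required compatibility ``is exactly the content of the hexagon relations \ldots combined with the pentagon relation.'' This is not correct, and it is the genuine gap in your argument. The hexagon and pentagon are exactly what is consumed in establishing the \emph{non-cyclic} action (they guarantee compatibility with the braiding $X$ and with iterated associativity, i.e. they are part of the cited Drinfeld--Bar-Natan theorem); they contribute nothing new to the cyclic check. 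What the cyclic check actually requires is the following: the extra transposition $\tau=(01)$ sends $A_{123}$ to $A_{231}^{-1}$, so one must verify that $\Phi(t_{12},t_{23})$ equals $\tau\,\Phi(t_{23},t_{31})^{-1}$. Rewriting $\tau(t_{31})=t_{30}$ via the cyclic sum relation $\sum_i t_{ij}=0$ and discarding the central elements $t_{ii}$ (which do not affect a group-like $\Phi(t_{23},-)$), this becomes $\Phi(t_{23},t_{12})^{-1}=\Phi(t_{12},t_{23})$, which is the \emph{antisymmetry} (unitarity) axiom $\Phi(y,x)=\Phi(x,y)^{-1}$ of $\GRT$. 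Had you tried to derive the identity from the hexagon, the computation would not close up.

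Two smaller imprecisions: first, your claim that in $\frt$ the framing generators $t_{jj}$ are ``determined'' by $t_{jj}=-\sum_{i\neq j}t_{ij}$ is circular in this context — that relation only holds in the cyclic presentation after the $t_{0i}$ have been adjoined, whereas in the non-cyclic $\frt(r)$ the $t_{ii}$ are additional central generators, so the extension $t_{jj}\mapsto\lambda t_{jj}$ is natural but not forced by the relation you cite. Second, describing the $\GRT$-action as ``uniform scaling'' of $\frt$ conflates the $\Q^\times$-part with the $\GRT_1$-part: $\GRT_1$ fixes all $t_{ij}$ and acts only by modifying the associator, so the action is an automorphism of the operad in categories $\PaRCD$, not merely of the operad in Lie algebras $\frt$. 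Neither of these is fatal, but the misattribution of the key identity to the hexagon/pentagon is, until the computation is actually carried out.
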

\begin{proof}
It is well known that the formulas above yield a well-defined action of $\GRT$ on the non-cyclic operad $\PaRCD$ \cite{BarNatan}.
We just need to check that the action is compatible with the cyclic structure. This in turn is sufficient to check on the generators. Clearly, for the generators $X$ and $t_{12}$ the cyclic action is preserved.
The transposition $\tau = (01)$ acts on the generator $A$ as follows:
\begin{align*}
   A_{123} & \mapsto A_{231}^{-1}.
\end{align*}
Hence we need to check that 
\begin{align*}
\Phi &\stackrel{?}= \tau \Phi(t_{23}, t_{31})^{-1}
=
\Phi(t_{23}, t_{30})^{-1}
=
\Phi(t_{23}, -t_{31}-t_{32}-t_{33})^{-1}
=
\Phi(t_{23}, t_{12}+t_{11}+t_{22})^{-1}
=
\Phi(t_{23}, t_{12})^{-1}
\stackrel{!}=
\Phi(t_{12},t_{23})
.
\end{align*}
Here we used that the elements $t_{ii}$ are all central in $\frt$, and the $\frt$-relations. The last equality is the antisymmetry relation in the definition of $\GRT$.
\end{proof}

\subsection{Formality of the framed little disks operad and $\GRT$-action on $\BV^c$}
Recall from \cite{Frextended} Fresse's construction a rational homotopy theory for operads. He shows there is a Quillen adjunction 
\[
\Omega_\sharp \colon \sSet\Op\rightleftarrows (\dgHOpc)^{op} \colon \G   
\]
between the category of simplicial operads and the category of dg Hopf cooperads. The geometric realization functor $\G$ is just the arity-wise application of the standard geometric realization functor of rational homotopy theory.
 We remark that the construction of the above adjunction in \cite{Frextended} readily extends to the cyclic operad setting -- one may just replace operads by cyclic operads throughout. This yields an analogous adjunction 
 \[
    \Omega_\sharp \colon \sSet\cycOp\rightleftarrows (\cycHOpc)^{op} \colon \G,
 \]
with $\G$ again the arity-wise application of the standard geometric realization functor.

Now the link between the objects $\PaRCD$ and $\BV^c$ is that there are weak equivalences of simplicial cyclic operads
\[
N_\bullet\mathbb G \PaRCD \xrightarrow{\sim} 
N_\bullet\mathbb G U(\frt) \xrightarrow{\cong}
\gamma(\frt)
\xrightarrow{\sim} 
\MC_\bullet(\frt)
= \G(C(\frtc)) \simeq L\G (\BV^c).
\]
linking the nerve of the grouplike elements in $\PaRCD$ to the derived geometric realization of $\BV^c$.
The intermediate element $\gamma(\frt)$ is Getzler's nerve construction. We refer to the discussion in \cite[section 3]{CIW} and \cite[II.14, III.5.0]{Frbook} for more details.

On the other hand, the cyclic formality of the framed little disks operad (see \cite[section 3]{CIW}) states that $N_\bullet\mathbb G \PaRCD$ is a model for the rationalization of the framed little disks operad $\flD_2$.
Concretely, a simplicial model for the latter is given by the nerve of the parenthesized ribbon braids operad $\PaRB$, and one has a direct morphism 
\[
N\PaRB \to N_\bullet\mathbb G \PaRCD.
\]
modeling the canonical morphism $\flD_2\to \flD_2^\Q$, cf. again \cite{CIW}.

In any case, the action of $\GRT$ on $\PaRCD$ (considered for now as a non-cyclic operad) yields a morphism 
\begin{align*}
\GRT &\to \Map_{\sSet\Op}(\widehat{N_\bullet\PaRB}, N_\bullet\mathbb G \PaRCD)
\simeq 
\Map_{\sSet\Op}(\widehat{N_\bullet\PaRB}, \G(C(\frtc)) )
\\&\cong  \Map_{\dgHOpc}( C(\frtc), \Omega_\sharp(\widehat{N\PaRB}))
\simeq \Map_{\dgHOpc}^h(\BV^c, \BV^c),
\end{align*}

with $\widehat{N\PaRB}$ a cofibrant replacement of $\PaRB$. We used the adjunction for the second equality and the formality of the framed little disks operad for the last.
This induces the map $\GRT\simeq \pi_0\Aut_{\dgHOpc}^h(\BV^c)$.
Now given that the action of $\GRT$ on $\PaRCD$ respects the cyclic structure by Proposition \ref{prop:GRT action}, we can consider the above chain of maps in the categories of cyclic (co)operads, and we obtain:

\begin{cor}\label{cor:GRT action}
The map from $\GRT$ to $\pi_0\Aut_{\dgHOpc}^h(\BV^c)$ factorizes through the cyclic homotopy automorphisms 
$$
\GRT\to \pi_0\Aut_{\cycHOpc}^h(\BV^c)\to \pi_0\Aut_{\dgHOpc}^h(\BV^c).
$$
\end{cor}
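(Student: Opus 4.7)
The plan is to run the construction of the map $\GRT \to \pi_0 \Aut^h_{\dgHOpc}(\BV^c)$ given just before the corollary entirely inside the category of cyclic objects, and then to recover the original non-cyclic map by applying the forgetful functor. The crucial new input is Proposition~\ref{prop:GRT action}, which upgrades the action of $\GRT$ on $\PaRCD$ to one by morphisms of cyclic operads in complete Hopf algebras. Every other functor in the chain used to build the non-cyclic map ($N_\bullet \mathbb{G}$, $\gamma$, $\MC_\bullet$, $C(-)$, $\G$, $\Omega_\sharp$) is applied aritywise, so it automatically preserves cyclic structures and produces cyclic analogues of all intermediate objects and morphisms.

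Concretely, I would first observe that the whole formality zig-zag
\[
N_\bullet \mathbb{G}\PaRCD \xrightarrow{\sim} N_\bullet \mathbb{G} U(\frt) \xrightarrow{\cong} \gamma(\frt) \xrightarrow{\sim} \MC_\bullet(\frt) = \G(C(\frtc)) \xrightarrow{\sim} L\G(\BV^c)
\]
lives in $\sSet\cycOp$, and that the cyclic formality result of \cite{CIW} furnishes a map of simplicial cyclic operads $N_\bullet\PaRB \to N_\bullet\mathbb{G}\PaRCD$ modelling $\flD_2 \to \flD_2^\Q$. Using the cyclic version of Fresse's adjunction $\Omega_\sharp \colon \sSet\cycOp \rightleftarrows (\cycHOpc)^{op} \colon \G$, already recorded earlier in the section, I can then rerun the non-cyclic construction verbatim:
\begin{align*}
\GRT &\to \Map_{\sSet\cycOp}(\widehat{N_\bullet\PaRB}, N_\bullet \mathbb{G}\PaRCD) \simeq \Map_{\sSet\cycOp}(\widehat{N_\bullet\PaRB}, \G(C(\frtc))) \\
&\cong \Map_{\cycHOpc}(C(\frtc), \Omega_\sharp(\widehat{N_\bullet\PaRB})) \simeq \Map^h_{\cycHOpc}(\BV^c, \BV^c),
\end{align*}
where now $\widehat{N_\bullet \PaRB}$ denotes a cofibrant replacement in $\sSet\cycOp$. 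Passing to $\pi_0$ yields the desired map $\GRT \to \pi_0 \Aut^h_{\cycHOpc}(\BV^c)$.

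To confirm the factorization through $\pi_0 \Aut^h_{\dgHOpc}(\BV^c)$, I would invoke naturality of the forgetful functor from cyclic to non-cyclic (co)operads: it is compatible with $\Omega_\sharp$, $\G$, the formality zig-zag, and the comparison $N_\bullet \PaRB \to N_\bullet \mathbb{G}\PaRCD$, and any cyclic cofibrant replacement of $N_\bullet\PaRB$ maps weakly equivalently to the non-cyclic one used in the original derivation. Applying the forgetful functor to the cyclic chain therefore reproduces the non-cyclic chain and hence the original map to $\pi_0\Aut^h_{\dgHOpc}(\BV^c)$. The only real content to verify, and the one place where care is needed, is the cyclic-equivariance of Fresse's adjunction and the formality zig-zag; since these have already been asserted in the excerpt, the proof amounts to bookkeeping $S_{r+1}$- rather than $S_r$-equivariance throughout.
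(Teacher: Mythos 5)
Your proposal is correct and follows essentially the same route as the paper: the paper likewise observes that, since the $\GRT$-action on $\PaRCD$ is cyclic by Proposition~\ref{prop:GRT action}, the entire chain of maps defining $\GRT\to\pi_0\Aut^h_{\dgHOpc}(\BV^c)$ (cyclic formality zig-zag, cyclic version of Fresse's adjunction) can be run in the cyclic categories, and the non-cyclic map is recovered by forgetting the cyclic structure. Your additional remarks on cofibrant replacements and equivariance bookkeeping only make explicit what the paper leaves implicit.
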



\subsection{Endomorphisms of $\BV^{mod}$}
\begin{prop}\label{prop:BV rigid}
    \begin{enumerate}
\item Any dg Hopf $\BV^c$-comodule morphism $\phi:\BV^{c,mod}\to \BV^{c,mod}$ is the identity.
\item Any biderivation of the identity morphism $\BV^{c,mod}\to \BV^{c,mod}$ is trivial.
    \end{enumerate}
\end{prop}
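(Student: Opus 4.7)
The plan is to exploit the Chevalley--Eilenberg formality model $C(\frtc)\xrightarrow{\sim}\BV^c$ introduced in the preceding subsection, which in each arity realizes $\BV^c((r))$ as the free graded commutative algebra $S(\frtc((r))[-1])$ with differential the Lie cobracket, and whose cogenerators sit in cohomological degree $1$. Since every dg Hopf comodule endomorphism or derivation of $\BV^{c,mod}$ is in each arity a dgca map out of such a free cdga, it is determined by its restriction to the cogenerator space $\frtc((r))[-1]$. Both parts of the proposition will follow by combining this "determination by cogenerators" with cohomological-degree constraints and compatibility with the comodule coaction.

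For part~(1), let $\phi$ be a dg Hopf $\BV^c$-comodule endomorphism and write $\psi_r:=\phi|_{\frtc((r))[-1]}$. By cohomological degree $\psi_r$ lands in the degree-$1$ subspace of $S(\frtc((r))[-1])$, which equals $\frtc((r))[-1]$ itself, so $\psi_r\in\End(\frtc((r)))$. I would then unpack the comodule coaction $\rho:\BV^{c,mod}\to\BV^{c,mod}\circ\BV^{c,nc}$ on a cogenerator $t_{ij}^*$ of $\BV^{c,mod}(r+s-1)=\BV^c((r+s-1))$, for the cyclic-operadic "plug" decomposition that inserts an arity-$s$ non-cyclic operation into an inner slot of an arity-$r$ cyclic base. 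Whenever $i,j$ both lie in the inner block, $\rho(t_{ij}^*)$ contains the term $1\otimes t_{ij}^*$ with coefficient $1$. The relation $\rho\circ\phi=(\phi\otimes\id)\circ\rho$ then forces the coefficient of $1\otimes t_{ij}^*$ on both sides to match; on the right the coefficient is $\phi(1)\otimes t_{ij}^*=1\otimes t_{ij}^*$, while on the left it is the coefficient of $t_{ij}^*$ in $\psi_r(t_{ij}^*)$. Varying the insertion slot and the indices exhausts every basis element, giving $\psi_r=\id$ and hence $\phi=\id$.

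For part~(2), the same reduction applies. A biderivation of the identity is a degree-$k$ dg Hopf derivation $D$ of $\BV^{c,mod}$ compatible (through $\id$) with the zero coderivation on $\BV^c$. Its restriction to $\frtc((r))[-1]$ takes values in $S^{1+k}(\frtc((r))[-1])$. For $k\le -1$ the target vanishes and we are done. For $k\ge 0$ the compatibility relation $\rho\circ D=(D\otimes\id)\circ\rho$, combined with the multiplicativity of $\rho$ as an arity-wise algebra map, forces each monomial in $D(t_{ij}^*)$ whose cogenerator factors can all be arranged to lie in the inner block of some insertion to cancel, by comparing $1\otimes(\text{monomial})$ coefficients (where the right-hand side contributes $D(1)\otimes(\ldots)=0$). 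Exhausting all insertions kills $D$ on cogenerators, hence on all of $\BV^{c,mod}$.

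The main technical obstacle is writing down the comodule coaction on the cogenerators $t_{ij}^*$ explicitly enough to isolate the $1\otimes t_{ij}^*$ term with coefficient $1$, and confirming that the scan over all insertion positions really pins down each basis cogenerator (resp.~each monomial in the derivation case) without hidden cancellations between different inner-block contributions. This amounts to a careful bookkeeping exercise tracing the cyclic operad structure of $\frt$ through duality and through the Chevalley--Eilenberg complex, which I expect to be the most laborious step.
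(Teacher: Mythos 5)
There is a genuine gap, concentrated in part (2). You claim that a biderivation of degree $k$ restricted to the degree-one generators lands in $S^{1+k}$ of the generators, and that ``for $k\le -1$ the target vanishes.'' For $k=-1$ the target is $S^0=\Q\cdot 1$, which does \emph{not} vanish: the candidate biderivation sending the co-BV operator $\Delta^*\in\BV^c((2))$ to a multiple of the algebra unit (dually, $1\mapsto\lambda\Delta$) is exactly the nontrivial case, and it is the case the whole application hinges on (it is the class that must be killed in the proof of Theorem \ref{thm:aut BVc contr}, and its survival or not is what distinguishes the cyclic from the non-cyclic automorphisms). Moreover, your proposed mechanism for ruling out such terms --- comparing coefficients of $1\otimes(\text{monomial})$ in the coaction compatibility, where ``the right-hand side contributes $D(1)\otimes(\cdots)=0$'' --- fails precisely here: in arity $2$ the relevant coefficient on \emph{both} sides is $D(\Delta^*)$ itself, so the arity-$2$ coaction gives no constraint. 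One genuinely has to go to arity $3$ and use a relation witnessing that $\BV^{mod}$ is a non-free module: the paper dualizes, observes that $\BV^{mod}$ is generated over $\BV$ by $1\in\BV((2))$ (so a biderivation is determined by $\xi(1)$, and degree counting in the two-dimensional space $\BV((2))$ leaves only $k=0,-1$), and then kills the $k=-1$ candidate by checking that it fails to annihilate the relation $(1-\sigma)\cdot(1\circ_2 c)=0$, via the explicit computation $\Delta\circ_2 c=E_{12}+E_{13}$, which is not $S_3$-symmetric. Your sketch contains no substitute for this step.

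Two further points. First, you identify $\BV^c((r))$ with the free cdga $S(\frtc((r))[-1])$; these are only quasi-isomorphic, and the proposition concerns strict endomorphisms of $\BV^{c,mod}$ itself. Your reduction to degree-one generators survives because $\BV^c((r))$ is generated in degree one as an algebra, but one must then also verify compatibility with the relations --- which is not a formality, as the $k=-1$ case shows. Second, even where your part (1) can be made to work, the route through insertion-slot bookkeeping in the coaction is far more laborious than the paper's one-line dual argument: the module is generated by the single element $1\in\BV((2))$, so $\phi$ is determined by $\phi(1)=\lambda\cdot 1$ (by degree), and the counit forces $\lambda=1$.
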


Here a degree $k$ biderivation $\xi$ of a cooperadic comodule map $f:\MOp\to \NOp$ is a degree $k$ morphism of the underlying graded cyclic sequences such that $f+\epsilon\xi$ is a cooperadic comodule map after extending the ground ring to $\Q[\epsilon]/\epsilon^2$, with $\epsilon$ of degree $-k$.

\begin{proof}
We equivalently show the dual statements for the $\BV$-module $\BV^{mod}$.
\begin{enumerate}
    \item 
For the first statement, note that $\BV^{mod}$ is generated as a $\BV$-module by $1\in \BV((2))$.
Hence any endomorphism $\phi$ is uniquely determined by knowing $\phi(1)\in \BV((2))$. But by degree reasons we have 
\[
\phi(1) = \lambda 1    
\]
for some $\lambda\in \Q$. But since the morphism $\phi$ needs to preserve the coalgebra structure, in particular the counit, we necessarily have $\lambda=1$, so that $\phi=\mathit{id}$.
\item Similarly, any derivation $\xi$ of the identity map $\BV^{mod}\to \BV^{mod}$ is uniquely determined by the image $\xi(1)\in \BV((2))$.  
By the same argument as before, there is no degree zero biderivation.
The only other possibility (by degree reasons) is to have a biderivation of degree -1 so that $\xi(1)=\lambda \Delta$, with $\lambda\in \Q$ and $\Delta$ the degree 1 generator of $\BV((2))$, i.e., the BV operator.

Note that $\BV^{mod}$ is generated by $1$ as a $\BV$-module, but not freely. In particular, we have that the binary product $m\in \BV((3))$ is symmetric and hence 
\[
  (1-\sigma) \cdot (1 \circ_2 c) = 0  
\]
for any $\sigma\in S_3$, and $c\in \BV(2)$ the commutative product generator. The biderivation must respect (i.e., annihilate) this relation, so that 
\[
   \lambda  (1-\sigma) \cdot (\Delta \circ_2 c) \stackrel{!}= 0.  
\]
Let us use the basis $(E_{ij})_{1\leq i<j \leq r}\subset \BV((r))$ of degree one part of $\BV((r))$, see \cite[section 2.7]{WillwacherDBV}.
In particular, $\Delta=E_{12}$ and 
\[
    \Delta \circ_2 c= E_{12}+E_{13}.
\]
This is not symmetric, for example for $\sigma=(12)$ we have 
\[
\sigma \cdot (E_{12}+E_{13}) = E_{12}+E_{23}\neq  E_{12}+E_{13} .  
\]
Hence we necessarily have $\lambda=0$ so that $\xi=0$ is trivial.
\end{enumerate}
\end{proof}

Note that the proposition in particular implies that 
\[
\Map^h_{\dgHModc-\BV^c}(\BV^c, \BV^c) 
\cong 
\Aut^h_{\dgHModc-\BV^c}(\BV^c) 
\]
since every (derived) endomorphisms induces a morphism of Hopf comodules on the cohomology level, which by the proposition is the identity.

\subsection{Homotopy endomorphisms and a dg Lie algebra}

To compute the homotopy endomorphisms of $\BV^{c,mod}$ we need to pick a cofibrant and a fibrant replacement. For the cofibrant replacement we take the Chevalley-Eilenberg complex of the framed Drinfeld-Kohno cooperad in Lie coalgebras
\[
  C(\frtc) \xrightarrow{\sim}  \BV^{c,mod}.
\]
Cofibrancy of this object can be shown parallel to the analog result \cite[II.14.1.7]{Frbook} for the Chevalley-Eilenberg complex of the non-framed Drinfeld Kohno operad in Lie algebras. 
For the fibrant replacement we take the comodule $W$-construction revisited in Appendix \ref{app:W},
\[
    \BV^{c,mod}\xrightarrow{\sim} W\BV^{c,mod}.
\]
Here we just remark that the Hopf cooperadic $W$ construction assigns to a Hopf cooperad $\COp$ another cofibrant Hopf cooperad $W\COp$ with a quasi-isomorphism $\COp\to W\COp$. The cooperad $W\COp\cong \Bar \oW\COp$ is identified with the cobar construction of a 1-shifted dg operad $\oW\COp$, which in turn comes with a quasi-isomorphism $\Bar^c\COp\to \oW\COp$ from the cooperadic cobar construction of $\COp$.
Similarly, the Hopf comodule $W$ construction assigns to the right $\COp$-comodule $\MOp$ a cofibrant $W\COp$-comodule $W\MOp$, with a quasi-isomorphism of $W\COp$-comodules $\MOp\to W\MOp$. Furthermore, $W\MOp\cong \Bar\oW \MOp$ is identified with the comodule bar construction of the $\oW\COp$-module $\oW\MOp$, which in turn comes with a quasi-isomorphism of $\Bar^c\COp$-modules from the module cobar construction of $\MOp$, $\Bar^c\MOp\xrightarrow{\sim} \oW\MOp$.
The $W$ construction is hence a version of the bar-cobar resolution, that however preseves the Hopf (dgca) structure on objects, in contrast to the usual bar-cobar resolution.

Then the homotopy endomorphisms of $\BV^{c,mod}$ are, as a simplicial set
\[
\Map^h_{\dgHModc_{\BV^c}}(\BV^c, \BV^c) 
\simeq 
\Map_{\dgHModc_{W\BV^c}}(C(\ftc), W\BV^{c,mod}).
\]
The object $C(\frtc)$ is a collection of free dgcas, generated by $\frtc[-1]$, and the object $W\BV^{c,mod}$ is cofree as a $W\BV^c$-comodule, cogenerated by $\oW \BV^c$.
Since a morphism between a free and a cofree object is (usually) determined by restriction to generators and projection to cogenerators, it is natural to define the graded vector space
\begin{equation}\label{equ:gdef}
\fg := \iHom_{\bbS}(\frtc[-1], \oW \BV^c)
=
\prod_{r\geq 2} \iHom_{\bbS_r}\left(\frtc((r)), \oW \BV^c((r))\right)[1].  
\end{equation}
\begin{prop}\label{prop:g}
There is a filtered complete dg $\sLie$ algebra structure on $\fg$ with the following properties.
\begin{enumerate}
\item We have 
\[
\Map_{\dgHModc_{W\BV^c}}(C(\frtc), W\BV^{c,mod}) \simeq \MC_\bullet(\fg).
\] 
\item The Maurer-Cartan element $0\in \fg$ corresponds to the natural quasi-isomorphism of dg Hopf $W\BV^c$-comodules
\[
    C(\frtc) \to \BV^c \to W\BV^{c,mod}.
\]
\item The dg Lie algebra structure is compatible with the descending complete filtration on $\fg$ inherited from the weight grading on $\ftc$.
\item The differential $d$ on $\fg$ has the form 
\[
d = d_{\oW \BV^c}  + (\cdots)
\]
with $(\cdots)$ terms that strictly increase the weight or the arity (i.e., the $r$ in \eqref{equ:gdef}).
\end{enumerate}
\end{prop}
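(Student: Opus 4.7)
The plan is to exhibit $\fg$ as a convolution type $\sLie$ algebra, exploiting that $C(\frtc)$ is quasi-free as a dgca cogenerated by $\frtc[-1]$ while $W\BV^{c,mod}$ is quasi-cofree as a $W\BV^c$-comodule cogenerated by $\oW\BV^c$. In general, if $\AOp$ is a quasi-free Hopf comodule generated by a symmetric sequence $V$ and $\BOp$ is a quasi-cofree Hopf comodule cogenerated by $W$, then a morphism $\AOp \to \BOp$ of Hopf comodules is determined by its restriction-projection to $\iHom_{\bbS}(V,W)$, and the space of such morphisms is identified with the Maurer--Cartan set of a natural filtered complete dg $\sLie$ algebra on $\iHom_{\bbS}(V,W)$. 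The same discussion applied simplicially (for the $\MC_\bullet$-simplicial enrichment, as in Hinich) yields item (a).

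First I would make this convolution structure explicit in our setting. The shifted Lie bracket on $\fg$ is built by pairing the cyclic Lie coalgebra cocompositions on $\frtc$ (which define the Chevalley--Eilenberg differential on $C(\frtc)$) with the 1-shifted cyclic operad structure on $\oW\BV^c$: given $\alpha,\beta \in \fg$, one coacts via the cyclic $\frtc$-structure, applies $\alpha \otimes \beta$, and composes in the 1-shifted operad $\oW \BV^c$, antisymmetrizing appropriately. The differential is induced from $d_{\oW\BV^c}$, the internal cyclic Lie coalgebra differential on $\frtc$, and from the $W\BV^c$-comodule structure map on the target (reducing to $\oW\BV^c$ via projection to cogenerators). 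Checking that $d^2 = 0$ and the Jacobi identity hold is formal once one writes out the compatibilities of the quasi-free/quasi-cofree presentations, together with the fact that $\oW\BV^c$ is a 1-shifted cyclic cooperad.

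Next, item (b) amounts to the observation that under the identifications above, the zero element of $\fg$ corresponds to the restriction-projection of the canonical composition $C(\frtc) \xrightarrow{\sim} \BV^{c,mod} \xrightarrow{\sim} W \BV^{c,mod}$, which sends $\frtc[-1]$ to the Hopf algebra generators of $\BV^c$ and then through the unit $\BV^c \hookrightarrow W\BV^c$. Writing out the combinatorics, this restriction-projection is the zero map since the image lies in the subcomodule $\BV^c \subset W\BV^c$, which has no contribution from $\oW\BV^c$ in positive weight under the cogenerator projection. Items (c) and (d) are then read off directly: the weight grading on $\frtc$ is bounded below and preserved (or increased) by every structure map in sight, so the induced decreasing filtration on $\fg$ is complete; the terms in the differential coming from the comodule structure and the $\frtc$ cocomposition are expressible as sums indexed by factorizations in $\ftc$, and each such factorization strictly increases the weight or the arity of the component, while the summand $d_{\oW \BV^c}$ preserves both.

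The main obstacle I anticipate is the bookkeeping for the signs and shifts needed to confirm that the resulting algebra is a genuine $\sLie$ (rather than $\Lie$) algebra with the correct simplicial $\MC_\bullet$-enrichment matching the derived mapping space, and in particular to verify item (a) with the Hopf (arity-wise dgca) structure taken into account --- one needs that Hopf comodule morphisms out of a quasi-free dgca object $C(\frtc)$ reduce to morphisms of symmetric sequences on the generators $\frtc[-1]$, without imposing additional multiplicativity constraints. This follows from the standard fact that the Chevalley--Eilenberg functor $C(-)$ is left adjoint to the indecomposables functor restricted to augmented dgcas, but it must be combined with the quasi-cofreeness of $W\BV^{c,mod}$ to give the Maurer--Cartan interpretation. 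Once this identification is in place, the remaining claims follow by inspection.
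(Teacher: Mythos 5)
Your overall strategy---a convolution-type dg $\sLie$ algebra on $\iHom_{\bbS}(\frtc[-1],\oW\BV^{c})$ built from the quasi-freeness of $C(\frtc)$ as a dgca and the quasi-cofreeness of $W\BV^{c,mod}$ as a comodule---is the same one the paper uses (it delegates the construction to \cite[Proposition 7.3]{Wobstruction}, checking that the recursion there converges because coproduct and coaction strictly decrease arity or weight). However, your treatment of item (b) contains a genuine error. You claim that the restriction--projection of the canonical map $C(\frtc)\to \BV^{c,mod}\to W\BV^{c,mod}$ is the zero element of $\fg$ because ``the image lies in the subcomodule $\BV^c\subset W\BV^c$.'' This is not so: the cogenerator projection $W\BV^{c,mod}\cong \Bar\oW\BV^{c,mod}\to \oW\BV^{c,mod}$ restricts to (essentially) the identity on the single-vertex-tree component, and $\oW\BV^{c,mod}$ contains $\BV^{c,mod}$ there, so the generator $t_{ij}^*$ is sent to $\omega_{ij}\neq 0$. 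The canonical quasi-isomorphism therefore corresponds to a \emph{nontrivial} Maurer--Cartan element $\alpha\in\fg$. Relatedly, the untwisted convolution structure is \emph{curved} (the zero map is not a Hopf comodule morphism, since it cannot respect units and counits), so item (a) only identifies $\Map$ with $\MC_\bullet$ of a curved $\sLie_\infty$ algebra; to obtain the statement of the proposition one must twist that curved structure by $\alpha$, which is the step your argument is missing.

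Two further points are glossed over. First, the claim that a simultaneous dgca-map and comodule-map is determined by its restriction--projection does not follow from the two (co)free adjunctions taken separately; the interaction of the multiplicative and comodule constraints is resolved by an obstruction-theoretic recursion (this is exactly \cite[Lemma 7.1]{Wobstruction}), and the relevant convergence input is that every application of the cocomposition or coaction strictly lowers arity or weight of $\frtc$. Second, the reason the resulting structure is a strict dg Lie algebra rather than a genuine $\sLie_\infty$ algebra is that the Chevalley--Eilenberg differential of $C(\frtc)$ involves the Lie cobracket only once (it is quadratic in the generators), so no operations of arity $\geq 3$ can arise; asserting that ``Jacobi is formal'' skips this justification. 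Items (c) and (d) are fine as you state them and agree with the paper.
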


We will show the proposition in Appendix \ref{sec:def cx}. The proof is rather technical, following the constructions in \cite{FTW}. Furthermore, we shall use the following general criterion for dg Lie algebras to have a contractible Maurer-Cartan space.

\begin{lemma}\label{lem:simple GM}
Let $\fh$ be a dg $\sLie$ algebra equipped with a compatible descending complete filtration 
\[
\fh = \mF^1\fh \subset \mF^2\fh \subset \cdots    
\]
such that the cohomology of the associated graded satisfies
\[
H^k(\gr \fh) =0 
\]
for $k\leq 0$.
Then $\MC_\bullet(\fh)$ is weakly contractible, $\MC_\bullet(\fh)\simeq *$.
\end{lemma}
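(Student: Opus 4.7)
The plan is to combine a spectral sequence argument (to transfer the vanishing assumption from the associated graded to $\fh$ itself) with the standard obstruction-theoretic description of Maurer--Cartan spaces of complete filtered (shifted) Lie algebras, inducting along the filtration.

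First, I would observe that the descending complete filtration gives rise to a convergent spectral sequence $E_1^{p,q} = H^{p+q}(\gr^p \fh) \Rightarrow H^{p+q}(\fh)$. By hypothesis, $E_1^{p,q} = 0$ whenever $p+q \leq 0$, so the abutment satisfies $H^k(\fh) = 0$ for all $k \leq 0$. I would then reduce contractibility of $\MC_\bullet(\fh)$ to contractibility of the quotients: since $\fh = \lim_n \fh/\mF^n \fh$ and $\MC_\bullet$ preserves limits, one has $\MC_\bullet(\fh) = \lim_n \MC_\bullet(\fh/\mF^n\fh)$, so it suffices to show the tower $\{\MC_\bullet(\fh/\mF^n\fh)\}$ is a tower of Kan fibrations between contractible Kan complexes (the limit is then contractible).

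The induction proceeds along $n$. For $n=1$ there is nothing to prove. For the step, the short exact sequence of $\sLie$ algebras
\[
0 \to \gr^n\fh \to \fh/\mF^{n+1}\fh \to \fh/\mF^n\fh \to 0,
\]
with $\gr^n\fh$ abelian (as the filtration is compatible with the bracket), induces a Kan fibration
\[
\MC_\bullet(\fh/\mF^{n+1}\fh) \to \MC_\bullet(\fh/\mF^n\fh)
\]
whose fiber over the zero basepoint is $\MC_\bullet(\gr^n\fh)$. Since $\gr^n\fh$ is abelian, its Maurer--Cartan space is, via Dold--Kan, identified with the simplicial abelian group associated to the non-positively graded truncation of $\gr^n\fh$, so $\pi_k(\MC_\bullet(\gr^n\fh)) = H^{-k}(\gr^n\fh)$ for $k \geq 0$. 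By hypothesis these vanish, so the fiber is contractible; the long exact sequence of the fibration together with the inductive hypothesis then give contractibility of $\MC_\bullet(\fh/\mF^{n+1}\fh)$. Taking the limit of this tower of trivial fibrations yields $\MC_\bullet(\fh)\simeq *$.

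The principal technical point, and the main obstacle, is justifying that for a central extension of complete $\sLie$ algebras with abelian kernel the induced map on Maurer--Cartan simplicial sets is a Kan fibration with the claimed fiber. This is standard material (see Getzler, Hinich, Berglund, or Dolgushev--Rogers), but care is needed to match the grading conventions of the $\sLie$-formalism used in the paper, where Maurer--Cartan elements sit in cohomological degree $0$ and the relevant homotopy groups based at $0$ recover $H^{-k}(\fh)$ rather than $H^{1-k}(\fh)$.
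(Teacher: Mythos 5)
Your argument is correct, but it is organized differently from the one in the paper. The paper splits the claim into two separate statements: connectivity of $\MC_\bullet(\fh)$ is proved by an explicit gauge-triviality induction along the filtration (an MC element in $\mF^p\fh$ reduces mod $\mF^{p+1}\fh$ to a degree-zero cocycle of $\gr^p\fh$, hence is exact, and one gauges it into $\mF^{p+1}\fh$; completeness lets one chain these gauge transformations), while the vanishing of $\pi_k$ for $k\geq 1$ is obtained by invoking Berglund's identification $\pi_k\MC_\bullet(\fh)\cong H^{-k}(\fh)$ at the basepoint $0$ together with the spectral sequence of the filtration. You instead run the classical tower argument: write $\MC_\bullet(\fh)$ as the limit of $\MC_\bullet(\fh/\mF^n\fh)$, use Getzler's theorem that a surjection of nilpotent $\sLie$ algebras induces a Kan fibration on MC spaces, identify the fiber over $0$ of each stage with the Dold--Kan space of the abelian layer $\gr^n\fh$, and kill everything by the long exact sequence. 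Both routes rest on exactly the same input, namely $H^{\leq 0}(\gr\fh)=0$. Your version is more uniform (it treats $\pi_0$ and the higher $\pi_k$ by one mechanism and avoids citing Berglund's computation, needing only the elementary abelian case), at the cost of having to justify the Kan-fibration property of the tower and the passage to the limit; the paper's version is shorter on the $\pi_0$ side but leans on two external results (the gauge action formalism and Berglund's theorem) rather than one. One small remark: your opening spectral-sequence deduction that $H^k(\fh)=0$ for $k\leq 0$ is never actually used in your induction, so you could delete it; the paper does use that deduction, but only for the higher homotopy groups after connectivity is already established.
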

\begin{proof}
This can be seen as a special case of the Goldman-Millson Theorem \cite{DolRog}, for the map of dg Lie algebras $0\to \fh$.
However, since the conditions required in the formulation of this theorem in \cite{DolRog} do not quite match our setup, we will just reprove the result.

First one checks that $\MC_\bullet(\fh)$ is connected.
Equivalently, any MC element $x\in \MC_\bullet(\fh)$ is gauge trivial.
Suppose inductively that $x\in \mF^p\fh$.
Then the MC equation implies that $[x]\in \gr^p\fh$ is a degree zero cocycle and hence exact by assumption, $x=dy+\mF^{p+1}$.
Hence gauge transforming $x$ by $y$ yields an MC element $x'\in \mF^{p+1}\fh$.
Chaining the sequence of gauge transformations thus obtained as in \cite[]{DolRog} yields a gauge transformation sending $x$ to zero as desired.

Next we claim that $\pi_k\MC_\bullet(\fh)=0$ for all $k\geq 1$.
We have that $\pi_k\MC_\bullet(\fh)=H^{-k}(\fh)$, see \cite{Berglund}.
But the spectral sequence associated to our filtration converges to 
$H(\fh)$ and has zero first page in non-positive degrees by assumption. Hence in particular $H^{-k}(\fh)=0$ for all positive $k$ as desired.
\end{proof}

\subsection{Proof of Theorem \ref{thm:aut BVc contr}}
By Proposition \ref{prop:g} we have to check that 
\[
\MC_\bullet(\fg) \simeq *.    
\]
To show this we apply Lemma \ref{lem:simple GM} to our dg Lie algebra $\fg$, equipped with the weight filtration.
That is, we want to show that $H^k(\gr_W \fg)=0$ for $k\leq 0$.
To compute $H(\gr_W \fg)$ we endow $\gr_W\fg$ with a further descending complete filtration by arity and consider the associated spectral sequence.
By the third assertion of Proposition \ref{prop:g} the associated graded complex is 
\[
\gr_{ar}\gr_W \fg \cong (\fg, d_{\oW\BV^{c}}) 
=
(\iHom_{\bbS}(\frtc[-1], \oW\BV^{c,mod}), d_{\oW\BV^c}).
\]
The cohomology is 
\begin{equation}\label{equ:pre g 1}
    \iHom_{\bbS}(\frtc[-1], H(\oW\BV^{c,mod}))
\prod_{r\geq 2} \frt((r)) \otimes_{\bbS_r} H(\oW\BV^{c,mod}((r))) [1]
=
\prod_{r\geq 2} \frt((r)) \otimes_{\bbS_r} H(\Bar^c\BV^{c,mod}((r)))[1].
\end{equation}
For the last equality we used that $\oW\BV^{c,mod}$ is quasi-isomorphic to the comodule cobar construction, cf. Appendix \ref{app:W}.
We can compare the comodule and the cooperadic cobar construction by (the dual of) Proposition \ref{prop:MMPs}. We have
\begin{align*}
    H(\Bar^c\BV^{c,mod}) &= \meis\oplus H(\overline{\Bar^c\BV^{c,mod}}) 
    &&\text{and}&
    H(\overline{\Bar^c\BV^{c,mod}}) &=  \MMP^*\otimes H(\overline{\Bar^c\BV^c}),
\end{align*}
where $\meis$ is the one-dimensional symmetric sequence concentrated in degree 0 and arity 2, see \eqref{equ:meis def}.
Furthermore, by \cite[Theorem 2.21 and Proposition 3.9]{DCV} we have that
\[
    H(\overline{\Bar^c\BV^c}((r)))
    \cong
    \begin{cases}
        u\Q[u][1] & \text{ƒor $r=2$} \\
        H_c^\bullet(\M_{0,r}) & \text{ƒor $r\geq 3$}
    \end{cases}.
\]
Here $u$ is a formal variable of degree $+2$ and $H_c^\bullet(\M_{0,r})$ is the compactly supported cohomology of the moduli space of genus zero curves with $r$ marked points. 
Note that $H_c^k(\M_{0,r})=0$ for $k< r-3$, while $\frt$ is concentrated in degree 0, and $\MMP^*$ is concentrated in degree $+1$. 
Hence the only terms in \eqref{equ:pre g 1} contributing cohomology in non-positive degrees are the following.  
\begin{itemize}
\item For $r=2$, one has that $\frt((2))$ is the trivial $\bbS_2$-module concentrated in degree $0$, while $H(\Bar^c_{mod}\BV^c((2)))$ is concentrated in degrees $1,3,\dots$.
Hence the only contribution in non-positive degrees comes from the summand $\meis$ above. This yields a one-dimensional vector space spanned by $t_{12}\otimes 1$ in degree $-1$. 
\item For $r=3$ we have that $H_c^\bullet(\M_{0,3})=\Q$ is the trivial $\bbS_3$-representation concentrated in degree zero. 
Hence $H(\Bar^{c}\BV^{c,mod}((3)))$ is one copy of the irreducible $\bbS_3$-representation\footnote{We denote the irreducible representation of a symmetric group corresponding to a partition $\lambda$ by $V_\lambda$.} $V_{21}$ concentrated in degree +1.
On the other hand, $\frt((3))\cong \Q^3\cong V_3 \oplus V_{21}$  is concentrated in degree $0$. Hence the $r=3$-factor in \eqref{equ:pre g 1} is one-dimensional concentrated in degree 0, spanned by (for example) $\partial_1 \otimes t_{23}$.
\item For $r\geq 4$ we have that $H_c^\bullet(\M_{0,r})$ is concentrated in degrees $\geq 1$, hence $H(\Bar^c\BV^{c,mod}((r)))$ is concentrated in degrees $\geq 2$, and the corresponding factors in \eqref{equ:pre g 1} live in positive degrees.
\end{itemize}

We hence see that on the $E^1$ page of our inner spectral sequence, we only have one-dimensional cohomology in degree -1 (in weight $1$ and arity $r=2$), one-dimensional cohomology in degree 0 (in weight $1$ and arity $r=2$), and no other non-positive cohomology.

We are hence done if we can show that both classes cancel on the next page $E^2$ of our arity spectral sequence.
While this can be checked explicitly, it is easier to use the following abstract argument. 
Suppose they would not cancel.
Then the degree -1 class would survive in the cohomology of $\gr_W\fg$.
It could also not be canceled in the later pages of the weight spectral sequence by degree and weight reasons.
Hence we would have that $H^{-1}(\fg)=\Q$. This implies that the comodule $\BV^c$ has a nontrivial degree -1 biderivation.
Hence, passing to cohomology, $\BV^c$ has a degree -1 biderivation as well. It is nontrivial, since the cohomology representative $t_{12}\otimes 1$ above corresponds to the map that sends the co-BV operator to 1. But this then contradicts the second part of Proposition \ref{prop:BV rigid}.
Hence we are done. \hfill\qed

\subsection{Automorphisms of pointed operadic modules and pairs}
To compare pointed and non-pointed comodule automorphisms we will use the following lemma.
\begin{lemma}\label{lem:Hopf ptpair pair seq}
    Let $\mu:(\COp, \MOp)\to \peis^*$ and $\nu:(\DOp, \NOp)\to \peis^*$ be objects in $\pt\PairHOpc$. Then their derived mapping spaces in $\ptPairOp$ and $\PairOp$ fit into a homotopy fiber sequence 
    \[
    \Map^h_{\pt\PairHOpc}\left((\COp, \MOp), (\DOp, \NOp)\right)
    \to 
    \Map^h_{\PairHOpc}\left((\COp, \MOp), (\DOp, \NOp)\right)
    \to 
    \Map^h_{\bbS_2-\dgca}(\MOp((2)), \Q).
    \]
    Similarly, for $\MOp'\to \eis_{\COp}$ another pointed right $\COp$-comodule we have the homotopy fiber sequence 
    \[
        \Map^h_{\pt\dgHModc_{\COp}}(\MOp, \MOp')
        \to 
        \Map^h_{\dgHModc_{\COp}}(\MOp, \MOp')
        \to 
        \Map^h_{\bbS_2-\dgca}(\MOp((2)), \Q).
    \] 
\end{lemma}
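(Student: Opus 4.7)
The plan is to deduce both fiber sequences from the general behavior of mapping spaces in slice model categories, together with an identification of one mapping space with a simpler $\bbS_2$-dgca mapping space.

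For the first statement, since $\ptPairHOpc = \PairHOpc_{/\peis^*}$ carries the slice model structure (see \cite{HirschhornSlice}), a standard fact yields, for cofibrant $X$ and fibrant $Y$ in this slice, a homotopy fiber sequence
\[
\Map^h_{\pt\PairHOpc}(X, Y) \to \Map^h_{\PairHOpc}(X, Y) \to \Map^h_{\PairHOpc}(X, \peis^*),
\]
with fiber taken over the point represented by the augmentation $Y \to \peis^*$. To ensure that the strict fiber over this point computes the homotopy fiber, the projection on the right can be shown to be a fibration by an argument parallel to Proposition \ref{prop:pair opc opc fibration}, exploiting the fibrancy of $\peis^*$. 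Taking $X = (\COp, \MOp)$ cofibrantly replaced and $Y = (\DOp, \NOp)$ fibrantly replaced produces the fiber sequence claimed in the lemma, modulo identifying the base.

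It remains to show $\Map^h_{\PairHOpc}((\COp, \MOp), \peis^*) \simeq \Map^h_{\bbS_2-\dgca}(\MOp((2)), \Q)$. The key point is that $\peis^*$ is cofree in $\PairHOpc$ with respect to the forgetful functor $(\COp, \MOp) \mapsto \MOp((2))$ landing in $\bbS_2$-dgca's: the comodule part $\meis^*$ is supported in arity 2 at $\Q$ by construction, the cooperad part $\eis^*$ is arranged so that morphisms $\COp \to \eis^*$ contribute no extra data (the implicit coaugmentation coming from the Hopf structure ensures the cooperad component is uniquely determined), and strict pair morphisms $(\COp, \MOp) \to \peis^*$ correspond bijectively to $\bbS_2$-dgca maps $\MOp((2)) \to \Q$. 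Since $\peis^*$ is fibrant and the right adjoint of this representability adjunction preserves weak equivalences between fibrant objects, the bijection descends to derived mapping spaces. The second fiber sequence is obtained by running the identical argument inside $\dgHModc_{\COp}$ sliced over $\meis^*$, viewed as a $\COp$-comodule through the Hopf augmentation.

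The main obstacle I anticipate is spelling out precisely the cofreeness of $\peis^*$ and $\meis^*$, including a clean check that the cooperad component of a map to $\peis^*$ is either canonical or contractible once the arity 2 data of the comodule map is fixed. This is essentially a formal verification from the definitions of $\peis^*$ and $\meis^*$ in Section 2.3.3, but requires some care in navigating the interaction between the Hopf structure, the implicit coaugmentation via cdga units, and the pointed-pair structure.
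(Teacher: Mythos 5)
Your proposal is correct and follows essentially the same route as the paper: the homotopy fiber sequence comes from the slice (over-)category structure via the fibration $\Map_{\PairHOpc}(X,Y)\to\Map_{\PairHOpc}(X,\peis^*)$, and the base is identified with $\Map^h_{\bbS_2-\dgca}(\MOp((2)),\Q)$ using the cofreeness/triviality of $\peis^*$ (the paper factors this identification through $\Map_{\dgHModc_{\COp}}(\MOp,\meis^*)$). One small imprecision: the fiber must be taken over the point of $\Map_{\PairHOpc}((\COp,\MOp),\peis^*)$ given by the structure map $\mu$ of the \emph{source}, not over the augmentation $\nu$ of the target.
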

\begin{proof}
    This follows from the following general result.
    Let $\cC$ be a model category and let $X\in \cC$ a fibrant object.
    Let $\pi: B\to X$ be fibrant in the overcategory $\cC_{/X}$, and let $\nu:A\to X$ be cofibrant in $\cC_{/X}$. Concretely, this means that $\iota$ is a fibration in $\cC$ and $A$ is a cofibrant object of $\cC$.
    Then the morphism 
    \[
    \Map_{\cC}(A, B)\to \Map_{\cC}(A, X)
    \]
    obtained by postcomposition with $\pi$ is an $\sSet$-fibration by \cite[Proposition II.3.2.12]{Frbook}.
    The fiber over the morphism $\nu:A\to X$ is 
    \[
        \Map_{\cC_{/X}}(\nu,\pi).
    \]
    Also note that $B$ is fibrant in $\cC$ since $\pi$ is a fibration. Hence all mapping spaces above model the respective derived mapping spaces.

    This general observation readily applies to the situation at hand, and noting that 
    \[
       \Map_{\PairHOpc}((\COp, \MOp), \peis^*) \cong 
       \Map_{\dgHModc_{\COp}}( \MOp, \meis^*) \cong\Map^h_{\bbS_2-\dgca}(\MOp((2)), \Q)
    \]
    the lemma follows.
\end{proof}

By Lemma \ref{lem:Hopf ptpair pair seq} we have a homotopy fiber sequence
\[
\Map^h_{\pt\dgHModc_{\BV^c}}(\BV^c, \BV^c)
\to
\Map^h_{\dgHModc_{\BV^c}}(\BV^c, \BV^c)
\to S^1_\mathbb{Q}.
\]
Since the middle space is weakly contractible by Theorem \ref{thm:aut BVc contr} the long exact sequence on homotopy groups gives us:

\begin{cor}\label{cor:map pt mod}
We have that $\Map^h_{pt\dgHModc_{\BV^c}}(\BV^c, \BV^c)\cong \Q$, that is,
\[
   \pi_k \Map^h_{pt\dgHModc_{\BV^c}}(\BV^c, \BV^c)
   =
   \begin{cases}
    \Q & \text{for $k=0$} \\
    * & \text{for $k\geq 1$}
   \end{cases}.
\]
\end{cor}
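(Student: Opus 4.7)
The proof plan is to read off the answer from the long exact sequence of homotopy groups associated to the homotopy fibre sequence
\[
\Map^h_{\pt\dgHModc_{\BV^c}}(\BV^c,\BV^c)
\to
\Map^h_{\dgHModc_{\BV^c}}(\BV^c,\BV^c)
\to S^1_\Q
\]
that has just been exhibited. All the ingredients are now in place: the middle term is weakly contractible by Theorem \ref{thm:aut BVc contr}, and the base term is (up to weak equivalence) the rational circle, whose only nontrivial homotopy group is $\pi_1 = \Q$.

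Concretely, I would first verify the identification of the base as $S^1_\Q$. By Lemma \ref{lem:Hopf ptpair pair seq} the base term is $\Map^h_{\bbS_2\mathit{-}\dgca}(\BV^{c,mod}((2)),\Q)$; since $\BV^c((2)) = H^\bullet(\flD_2(2))$ is an exterior algebra on three degree-$1$ generators with $\bbS_2$ acting appropriately, and since the derived mapping dgca-space to $\Q$ computes the Sullivan realisation, one gets the rational homotopy type of the $\bbS_2$-fixed rational space, which collapses to $S^1_\Q$. This identification is exactly what was used in the paragraph preceding the corollary.

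Given this, the long exact sequence reads, for each $k\geq 0$,
\[
\cdots \to \pi_{k+1}(S^1_\Q) \to \pi_k\Map^h_{\pt\dgHModc_{\BV^c}}(\BV^c,\BV^c) \to \pi_k\Map^h_{\dgHModc_{\BV^c}}(\BV^c,\BV^c) \to \pi_k(S^1_\Q) \to \cdots
\]
Since the non-pointed mapping space is weakly contractible by Theorem \ref{thm:aut BVc contr}, its homotopy groups vanish, and the connecting maps yield isomorphisms
\[
\pi_k\Map^h_{\pt\dgHModc_{\BV^c}}(\BV^c,\BV^c) \;\cong\; \pi_{k+1}(S^1_\Q)
\]
for all $k \geq 0$. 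Since $\pi_1(S^1_\Q)=\Q$ and $\pi_j(S^1_\Q)=0$ for $j\neq 1$, this gives $\Q$ at $k=0$ and trivial homotopy for $k\geq 1$, which is exactly the statement of the corollary.

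I do not foresee any serious obstacle; the only genuine content, namely the contractibility of the non-pointed comodule automorphism space, is already provided by Theorem \ref{thm:aut BVc contr}, while the fibration result and the identification of the fibre have been handled by Lemma \ref{lem:Hopf ptpair pair seq}. The only mild subtlety is confirming that the $\bbS_2$-equivariant Sullivan realisation of $\BV^{c,mod}((2))$ has the rational homotopy type of a circle rather than a higher rational torus, but this follows directly from standard rational homotopy theory applied to the known cohomology of $\flD_2(2)$ together with the action of $\bbS_2$ used to define the cyclic and pointed structure.
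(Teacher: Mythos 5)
Your overall strategy is exactly the paper's: apply the homotopy fiber sequence of Lemma \ref{lem:Hopf ptpair pair seq}, use Theorem \ref{thm:aut BVc contr} to make the middle term contractible, and read off $\pi_k$ of the fiber as $\pi_{k+1}(S^1_\Q)$ from the long exact sequence. That part is correct and is essentially all the paper says.

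However, your justification of the identification of the base with $S^1_\Q$ contains a genuine error. You identify $\BV^{c,mod}((2))$ with $H^\bullet(\flD_2(2))$, the exterior algebra on three degree-one generators. But in the cyclic-sequence conventions of Section \ref{sec:symseq} the component $((r))$ corresponds to operadic arity $r-1$; indeed the distinguished point lives in $\MOp((2))^{S_2}$ precisely because it remembers the operadic \emph{unit}, which sits in arity one. Hence $\BV^{c,mod}((2)) = \BV^c((2)) = H^\bullet(\flD_2(1)) = H^\bullet(S^1)$, a two-dimensional algebra with a single degree-one generator (dual to the BV operator), whose realization is $S^1_\Q$ directly. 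Had your identification been correct, the conclusion would in fact fail: the $\bbS_2$-action on $H^1(\flD_2(2)) \cong \Q^3$ swaps the two framing classes and fixes the configuration class, so the invariants are two-dimensional and the $\bbS_2$-fixed realization would be a rational $2$-torus rather than $S^1_\Q$ --- which would force $\pi_0$ of the fiber to be $\Q^2$ and contradict the corollary. So the long-exact-sequence bookkeeping is fine, but the input ``base $\simeq S^1_\Q$'' needs the correct reading of the binary cyclic component.
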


Similarly:
\begin{cor}\label{cor:map pair}
    We have that 
    \[
        \Aut^h_{\PairHOpc}(F(\BV^c))\cong \Aut^h_{\HOpc}(\BV^{c,nc}),
    \]
    that is,
    \[
       \pi_k \Map^h_{\PairHOpc}(\BV^c, \BV^c)
       =
       \begin{cases}
        \GRT & \text{for $k=0$} \\
        \Q & \text{for $k= 1$} \\
        * & \text{for $k\geq 1$}
       \end{cases}.
    \]
\end{cor}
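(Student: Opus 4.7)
The plan is to use Proposition \ref{prop:pair opc opc fibration} (the Hopf cooperad version) to reduce the computation of $\Map^h_{\PairHOpc}(F(\BV^c), F(\BV^c))$ to that of $\Map^h_{\HOpc}(\BV^{c,nc}, \BV^{c,nc})$, which by \eqref{equ:pi aut bv} is weakly equivalent to $\GRT \ltimes \SO(2)^\Q$ on its auto-components. The key observation that makes this reduction work is the contractibility result of Theorem \ref{thm:aut BVc contr}.

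First I would apply Proposition \ref{prop:pair opc opc fibration} to obtain an $\sSet$-fibration
\[
    \Map^h_{\PairHOpc}(F(\BV^c), F(\BV^c))
    \to
    \Map^h_{\HOpc}(\BV^{c,nc}, \BV^{c,nc}),
\]
whose fiber over $f:\BV^{c,nc}\to\BV^{c,nc}$ is the Hopf comodule mapping space $\Map^h_{\dgHModc_{\BV^{c,nc}}}(f_*\BV^{c,mod}, \BV^{c,mod})$. Next, I would show that this fiber is weakly contractible whenever $f$ is a quasi-isomorphism: for such $f$, the pushforward $f_*\BV^{c,mod}$ is quasi-isomorphic to $\BV^{c,mod}$ as a Hopf $\BV^{c,nc}$-comodule, so the fiber is weakly equivalent to $\Map^h_{\dgHModc_{\BV^c}}(\BV^{c,mod}, \BV^{c,mod})$. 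By the remark following Proposition \ref{prop:BV rigid}, this coincides with $\Aut^h_{\dgHModc_{\BV^c}}(\BV^{c,mod})$, which is weakly contractible by Theorem \ref{thm:aut BVc contr}.

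Restricting the fibration to auto-components on both sides, I would then conclude that it induces a weak equivalence
\[
\Aut^h_{\PairHOpc}(F(\BV^c)) \xrightarrow{\sim} \Aut^h_{\HOpc}(\BV^{c,nc}) \simeq \GRT \ltimes \SO(2)^\Q,
\]
from which the stated homotopy groups $\pi_0 = \GRT$, $\pi_1 = \Q$ (coming from $\SO(2)^\Q$), and $\pi_k = 0$ for $k \geq 2$ follow immediately. Essential surjectivity onto $\pi_0 = \GRT$ is guaranteed by Corollary \ref{cor:GRT action} combined with Theorem \ref{thm:main Hopf}, which ensures that cyclic autos lift through $F$ to pair autos, hitting all of $\GRT$.

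The main obstacle I anticipate is confirming that the fibration restricts well to auto-components and that essential surjectivity extends to the $\SO(2)^\Q$-loops; the latter follows formally from the long exact sequence of homotopy groups once the fibers are contractible. An alternative approach would be to apply Lemma \ref{lem:Hopf ptpair pair seq} to relate $\Map^h_{\PairHOpc}$ to $\Map^h_{\pt\PairHOpc}$, using that $\Map^h_{\pt\PairHOpc}(F(\BV^c), F(\BV^c)) \simeq \Map^h_{\cycHOpc}(\BV^c, \BV^c) \simeq \GRT$ (discrete) by Theorem \ref{thm:main Hopf} and Corollary \ref{cor:Aut BV}, together with the computation $\Map^h_{\bbS_2\text{-}\dgca}(\BV^c((2)), \Q) \simeq S^1_\Q$; in that route the crux becomes showing the connecting homomorphism $\pi_1(S^1_\Q) = \Q \to \GRT$ vanishes, which would reflect the fact that the circle $\SO(2)$ acts trivially on $\pi_0$ of the cyclic auto space.
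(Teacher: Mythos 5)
Your proposal is correct and follows essentially the same route as the paper: the fibration of Proposition \ref{prop:pair opc opc fibration} with contractible fibers by Theorem \ref{thm:aut BVc contr}, the resulting long exact sequence identifying $\pi_1\cong\Q$ and injecting $\pi_0$ into $\GRT$, and surjectivity onto $\pi_0=\GRT$ via Corollary \ref{cor:GRT action}. Your explicit justification that the fiber over a quasi-isomorphism $f$ is equivalent to $\Aut^h_{\dgHModc_{\BV^c}}(\BV^{c,mod})$ is a detail the paper leaves implicit, but it is the same argument.
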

\begin{proof}
    We consider the long exact sequence of homotopy groups associated to the fibration of Proposition \ref{prop:pair opc opc fibration}, using Theorem \ref{thm:aut BVc contr},
    \[
        \cdots
        \to 0
        \to \pi_1 \Aut^h_{\PairHOpc}(\BV^c)
        \to \Q
        \to 0
        \to \pi_0 \Aut^h_{\PairHOpc}(\BV^c) 
        \to \GRT. 
    \]
Clearly, we are done if we can check that the last arrow is surjective.
This surjectivity is equivalent to the statement that the derived $\GRT$-action on $\BV^{c,nc}$ extends to the pair $F(\BV^c)=(\BV^{c,nc},\BV^{c,mod})$. But this follows if the action respects the cyclic structure, by functoriality of $F$.
But this is Corollary \ref{cor:GRT action}, so that the final arrow in the sequence above is indeed onto.
\end{proof}

\subsection{Proof of Corollary \ref{cor:Aut BV}}
Finally we consider the homotopy automorphism space of the cyclic dg Hopf cooperad $\BV^c$.
By Theorem \ref{thm:main Hopf} we have that 
\[
\Aut^h_{\cycHOpc}(\BV^c) \simeq \Aut^h_{\pt\PairHOpc}(\BV^{c,nc}, \BV^{c,mod}).  
\]
By Lemma \ref{lem:Hopf ptpair pair seq} the right-hand side fits into a homotopy fiber sequence 
\[
    \Aut^h_{\pt\PairHOpc}(\BV^{c,nc}, \BV^{c,mod})
    \to
    \Aut^h_{\PairHOpc}(\BV^{c,nc}, \BV^{c,mod}) 
    \to
    S^1_\Q
\]
The homotopy groups of the middle space are computed in Corollary \ref{cor:map pair}. Hence we obtain the following long exact sequence of homotopy groups
\begin{equation}\label{equ:aut BV long exact}
    \cdots
    \to 0
    \to \pi_1 \Aut^h_{\cycHOpc}(\BV^c)
    \to \Q
    \xrightarrow{f} \Q
    \to \pi_0 \Aut^h_{\cycHOpc}(\BV^c) 
    \to \GRT
    \to 0,
\end{equation}
with all homotopy groups to the left being trivial.
We claim that the middle morphism 
\begin{equation}\label{equ:pre aut bv1}
f: \Q\cong \pi_1 \Aut^h_{\PairHOpc}(F(\BV^c))
\to 
\Q\cong \pi_1 \Map_{\bbS_2-\dgca}(\BV((2)), \Q)
\end{equation}
is an isomorphism. From this Corollary \ref{cor:Aut BV} clearly follows.

The map $f$ is induced by restricting derived automorphisms of $F(\BV^c)$ to the binary part of the comodule component, and then composing with the counit.
In particular, the left-hand side of \eqref{equ:pre aut bv1} comes from the $S^1$-action on the non-cyclic operad $\BV^{nc}$. This in turn is represented by the biderivation $\xi$ of $\BV^{nc}$ of degree $-1$ 
\[
\BV^{nc}(r)
\ni x \mapsto \xi(x):=
\Delta \circ_1 x - (-1)^{|x|}
\sum_{j=1}^r x\circ_j \Delta.     
\]
We first extend this derivation to the pair $(\BV^{nc}, \BV^{mod})$. This is done by the assignment 
\[
\BV^{mod}((r))
\ni y \mapsto \xi^{mod}(y):=
\pm\sum_{j=1}^r y\circ_{j,0} \Delta.     
\]
In particular, this means that 
\[
    \xi^{mod}(1) = \pm 2\Delta.
\]
This means that the morphism $f$ in \eqref{equ:aut BV long exact} is multiplication by $\pm 2$, and hence an isomorphism.
\hfill\qed

\appendix 
\section{Deformation complex for Hopf comodules}
\label{sec:def cx}

Operadic $W$ constructions are a fairly well-studied subject in the literature, and general accounts exist \cite{BM, BMColored, FTW}.
Similarly, deformation complexes for operads, Hopf cooperads and comodules have also been used in various other works.
Unfortunately, the precise technical situation we need for Proposition \ref{prop:g}, namely the version for Hopf comodules $\MOp$ over Hopf cooperads with nontrivial unary operations, does not readily exist in the literature. We hence briefly revisit these technical developments and remark how literature results can be extended to apply to our situation at hand.

\subsection{$W$ constructions}
\label{app:W}
\newcommand{\catT}{\mathcal{T}}
\newcommand{\vstar}{\mathrm{star}}
\newcommand{\vroot}{\mathit{root}}
Let $\COp$ be a dg Hopf cooperad. We allow that $\COp$ has non-trivial unary cooperations, but require that $\COp(1)$ is connected.
This means that the unary part $\bar \COp(1)$ of the coaugmentation coideal is concentrated in strictly positive degrees. Furthermore, let $\MOp$ be a right $\COp$ comodule

We then extend the cooperadic $W$ construction from \cite[section 5]{FTW} to this context.
For $S$ a set we consider a category $\catT_S$ whose set of objects are the rooted trees with set of leaves $S$ and at least bivalent vertices. 
\[
\begin{tikzcd}
    \node[int] (r) at (0,-1) {};
    \node[int] (v1) at (-.5,-.3) {};
    \node[int] (v2) at (0.5,-.3) {};
    \node[int] (w) at (-.5,.4) {};
    \node (e1) at (-.5,1.1) { 1 };
    \node (e2) at (-1,.3) {2};
    \node (e3) at (.5,.8) {3};
    \node (e4) at (1,.8) {4};
    \node (e5) at (1.5,.8) {5};
    \draw (r) edge +(0,-.5) edge (v1) edge (v2)
    (v1) edge (w) edge (e2)
    (w) edge (e1)
    (v2) edge (e3) edge (e4) edge (e5);
\end{tikzcd}
\quad \quad 
S=\{1,2,3,4,5\}
\]
Here the valence of a vertex refers to the total valence, i.e., the number of children plus one. Note that in contrast to \cite[section 5]{FTW} we allow bivalent vertices. The morphisms of $\catT_S$ are generated by subgraph contraction. Also denote by $\catT_S'\subset \catT_S$ the full subcategory of trees that have at least one vertex.
(In other words, exclude the trivial tree consisting of the root connected to one edge, arising only for $|S|=1$.)

We then define three functors.
\begin{itemize}
\item The functor 
\begin{gather*}
    \bar \COp \colon \catT_S^{op} \to \dgca \\
    T \mapsto \otimes_T \bar \COp = \otimes_{v\in VT} \bar \COp(\vstar(v))
\end{gather*}
assigns to every tree the tree-wise tensor product.
Here $\vstar(v)$ is the set of children (including leaves) of the vertex $v$ in the vertex set $VT$ of $T$. The contraction morphism is sent to the cooperadic cocomposition.
\item The functor 
\begin{gather*}
    \MOp \colon (\catT_S')^{op} \to \dgca \\
    T \mapsto \otimes_T (\MOp, \bar \COp) = 
    \MOp(\vstar(\vroot))\otimes 
    \bigotimes_{v\in VT \atop v\neq \vroot} \bar \COp(\vstar(v))
\end{gather*}
assigns the tree-wise tensor product, with $\MOp$ decorating the bottom vertex $\vroot$, while all other vertices are decorated by $\bar \COp$.
The contraction morphisms are sent to either the coaction or the cocomposition, depending on whether the edge is incident to $\vroot$ or not. 
\item The functor 
\begin{gather*}
    E \colon \catT_S \to \dgca \\
    T \mapsto \otimes_{e\in ET} \Q[t,dt]
    \cong \Q[t_e,dt_e\mid e\in ET]
\end{gather*}
takes a tree $T$ to a dg commutative algebra that is a product of polynomial differential forms on the interval, with one factor for each internal edge of $T$. The morphism of $\catT_S$ contracting some edge $e$ is sent to the evaluation at $t_e=0=dt_e$.
\end{itemize}

We then define the ends 
\begin{align*}
    W\COp(S) &= \int_{T\in \catT_S} \bar \COp(T) \otimes E(T) \\
    W\MOp(S) &= \int_{T\in \catT_S'} \MOp(T) \otimes E(T).
\end{align*}
These assemble into symmetric sequences in dg commutative algebras $W\COp$, respectively $W\MOp$. 
Note that our category $\catT_S$ can have infinitely many objects, while the analogous object in \cite[section 5]{FTW} had only finitely many, due to requiring a trivalence condition on vertices.
However, if we fix a cohomological degree $k$ then the number of trees that contribute to the end $(W\COp(S))_k$ is still finite in our case.
This is because every bivalent vertex carries a decoration of degree at least 1, due to our connectedness assumption on $\COp(1)$, and all other contributing tensor factors are of non-negative degree.

Hence from this point on we may again apply the same line of arguments as in \cite[section 5.2]{FTW} and conclude the following:
\begin{itemize}
\item The object $W\COp$ is a dg Hopf cooperad, and comes with a natural weak equivalence (quasi-isomorphism) of dg Hopf cooperads 
\[
\COp\to W\COp.    
\]
The cooperadic cocomposition is by de-grafting of trees, i.e., cutting of an edge.
In the process the edge decorations of the cut edge $e$ is evaluated at $t_e=1$, $dt_e=0$.
\item As a (non-Hopf) dg cooperad we have that 
\[
W\COp \cong \Bar \oW\COp    
\]
is identified with the bar construction of an augmented 1-shifted dg operad $\oW\COp$. Concretely, we have that 
\[
    \overline{\oW\COp}(S) = \int_{T\in \catT_S'} \bar \COp(T) \otimes E_\partial(T),
\]
with $E_\partial(T)\subset E(T)$ the subspace of differential forms that vanish on the planes $\{t_e=1\}$ for all edges $e\in ET$.
The operadic composition is by grafting trees, decorating the newly formed edge $e$ by $dt_e$.
\item The 1-shifted dg operad $\oW\COp$ in turn is weakly equivalent to the dg operadic cobar construction  
\[
\Bar^c(\COp) \xrightarrow{\sim} \oW\COp.
\]
\item The object $W\COp$ is fibrant in $\dgHOpc$.
\end{itemize}

The arguments leading to these statements are also readily applicable to $W\MOp$ and yield:
\begin{itemize}
    \item The object $W\MOp$ is a dg Hopf $W\COp$-comodule, and comes with a natural weak equivalence (quasi-isomorphism) of dg Hopf $W\COp$-comodules
    \[
    \COp\to W\COp.    
    \]
    The cooperadic coaction is again by de-grafting of trees.
    \item As a (non-Hopf) dg comodule we have that 
    \[
    W\MOp \cong \Bar \oW\MOp    
    \]
    is identified with the bar construction of an $\oW\COp$-module $\oW\MOp$. Concretely, we have that 
    \[
        \oW\MOp(S) = \int_{T\in \catT_S'} \MOp(T) \otimes E_\partial(T).
    \]
    \item The module $\oW\MOp$ in turn is weakly equivalent to the dg operadic module cobar construction  
    \[
    \Bar^c(\MOp) \xrightarrow{\sim} \oW\MOp.
    \]
\item The object $W\MOp$ is fibrant in $\dgHModc_{W\COp}$.
    \end{itemize}

\subsection{The proof (sketch) of Proposition \ref{prop:g}}
We observe that our connectivity assumption for $\COp(1)$ above is satisfied in our case $\COp=\BV^c$. In particular, the $W$ construction $W\BV^{c,mod}$ is well-defined.

Next may use the proof of \cite[Proposition 7.3]{Wobstruction}.
That proposition is formulated for modules over reduced $\La$ Hopf cooperads. However, the arguments do not use the $\La$ structure and are applicable in our situation provided that the recursion in the proof of \cite[Lemma 7.1]{Wobstruction} converges after finitely many steps.
But this is certainly true in our case $\MOp=C(\frtc)$, since any application of the coproduct or coaction reduces arity or weight.
The conclusion of \cite[Proposition 7.3]{Wobstruction} is hence that we have a curved $\sLie_\infty$-structure on $\fg$ such that assertion a. of Proposition \ref{prop:g} holds.
But this means that in particular the canonical morphism of $W\BV^c$-comodules 
\[
C(\frtc) \to \BV^{c,mod} \to W\BV^{c,mod}
\]
corresponds to a Maurer-Cartan element $\alpha\in \fg$. Replacing the curved $\sLie_\infty$-structure on $\fg$ with its $\alpha$-twisted version we obtain a non-curved $\sLie_\infty$-structure on $\fg$ such that assertion b. of Proposition \ref{prop:g} also holds.

Furthermore, the construction of the $\sLie_\infty$-structure in \cite[Proposition 7.3]{Wobstruction} uses only natural operations, and in particular the Lie cobracket on $\frtc$ only once. It follows that the $\sLie_\infty$-structure cannot have homotopies of arity $\geq 3$, and is hence an honest dg Lie structure.
Furthermore, assertion c. of Proposition \ref{prop:g} follows since all operations that go into the construction of the dg Lie structure preserve the weight filtration.

For the last assertion d. of Proposition \ref{prop:g} one has to unpack the differential of the dg Lie structure on $\fg$ further.
We just note that from the construction of \cite[Proposition 7.3]{Wobstruction} it is clear that the differential on $\fg$ has the form $d_{\oW \BV^c}  + (\cdots)$, with $(\cdots)$ terms that are built using at least one application of the Lie coalgebra structure on $\frtc$, or a reduced cocomposition. Both these operations strictly reduce the weight, and/or the arity. Hence, on the space $\fg$ consisting of functions on $\fg$ these terms $(\cdots)$ increase weight or arity as claimed.

We also refer to (the more complicated) \cite[Proposition 23]{FWAut} for a similar statement on the differential of a Hopf cooperadic deformation complex.
\hfill\qed

\bibliographystyle{amsalpha}

\end{document}